\theoremstyle{plain}
\newtheorem{defn}{Definition}[section]
\newtheorem{notation}[defn]{Notation}
\newtheorem{thm}[defn]{Theorem}
\newtheorem{cor}[defn]{Corollary}
\newtheorem{convention}[defn]{Convention}
\newtheorem{prop}[defn]{Proposition}
\newtheorem{remark}[defn]{Remark}
\newtheorem{lm}[defn]{Lemma}
\newtheorem{fact}[defn]{Fact}
\newtheorem{construction}[defn]{Construction}
\newtheorem{case}{Case}
\newtheorem{assume}[defn]{Assumption}
\newtheorem{ep}[defn]{Example}
\newtheorem*{main thm}{Theorem \ref{main}}
\newtheorem*{Dehn filling}{Proposition \ref{Dehn filling}}
\newtheorem*{pretzel}{Proposition \ref{pretzel}}
\newtheorem*{no interior singularity}{Proposition \ref{no interior singularity}}
\newtheorem*{application 1}{Proposition \ref{application 1}}
\newtheorem*{application 1a}{Proposition \ref{application 1} (a)}
\newtheorem*{application 1b}{Proposition \ref{application 1} (b)}
\newtheorem*{application 2}{Proposition \ref{application 2}}
\newtheorem*{part 1*}{Part I: $\tau$ realizes all rational slopes in $[0,\frac{p}{q+c})$}
\newtheorem*{part 2*}{Part II: $\tau$ realizes all rational slopes in $(-\infty, 0) \cup \{\infty\} \cup (\frac{p}{q-c}, +\infty)$}
\newtheorem*{part 1}{Part I: $\tau$ realizes the slope $0$}
\newtheorem*{part 2}{Part II: $\tau$ realizes all rational slopes in $(0,\frac{p}{q+c})$}
\newtheorem*{part 3}{Part III: $\tau$ realizes all rational slopes in $(-\infty, 0) \cup \{\infty\} \cup (\frac{p}{q-c}, +\infty)$}
\newcommand\blfootnote[1]{%
	\begingroup
	\renewcommand\thefootnote{}\footnote{#1}%
	\addtocounter{footnote}{-1}%
	\endgroup
}
\begin{document}
	
\title[Taut foliations, Dehn fillings and co-orientation-reversing mapping tori]{co-orientable taut foliations in 
	Dehn fillings of pseudo-Anosov mapping tori with
	co-orientation-reversing monodromy}
\author{Bojun Zhao}
\address{Department of Mathematics, University at Buffalo, Buffalo, NY 14260, USA}
\maketitle

\begin{abstract}
	Let $\Sigma$ be a compact orientable surface with nonempty boundary,
	let $\varphi: \Sigma \to \Sigma$ be an orientation-preserving pseudo-Anosov homeomorphism,
	and let $M = \Sigma \times I / \stackrel{\varphi}{\sim}$ be the mapping torus of $\Sigma$ over $\varphi$.
	Let $\mathcal{F}^{s}$ denote the stable foliation of $\varphi$ in $\Sigma$.
	Let $T_1, \ldots, T_k$ denote the boundary components of $M$.
	With respect to a canonical choice of meridian and longitude on each $T_i$,
	the degeneracy locus of the suspension flow of $\varphi$ on $T_i$ can 
	be identified with a pair of integers $(p_i; q_i)$ such that
	$p_i > 0$ and $-\frac{1}{2}p_i < q_i \leqslant \frac{1}{2}p_i$.
	Let $c_i$ denote the number of components of $T_i \cap (\Sigma \times \{0\})$.
	Assume that $\mathcal{F}^{s}$ is co-orientable and
	$\varphi$ reverses the co-orientation on $\mathcal{F}^{s}$.
	We show that the Dehn filling of $M$ along $\partial M$ with
	any multislope in $J_1 \times \ldots \times J_k$ 
	admits a co-orientable taut foliation,
	where $J_i$ is one of the two open intervals in $\mathbb{R} \cup \{\infty\} \cong \mathbb{R}P^{1}$ between
	$\frac{p_i}{q_i + c_i}, \frac{p_i}{q_i - c_i}$ which
	doesn't contain $\frac{p_i}{q_i}$.

For some hyperbolic fibered knot manifolds,
the slopes given above contain all slopes that yield non-L-space Dehn filllings.
The examples include 
(1)
the exterior of the $(-2,3,2q+1)$-pretzel knot in $S^{3}$ for each $q \in \mathbb{Z}_{\geqslant 3}$
(previously proven by Krishna in 2020; see \hyperref[Kri]{[Kri]}),
(2)
the exteriors of many L-space knots in lens spaces.
\end{abstract}

\blfootnote{2020 \emph{Mathematics Subject Classification}.
	57M50.}
\blfootnote{Key words: Taut foliations, Dehn fillings, degeneracy slopes.}
\blfootnote{Email address: bojunzha@buffalo.edu}
\blfootnote{Present address: D\'epartement de math\'ematiques, 
	Universit\'e du Qu\'ebec \`a Montr\'eal, 201 President Kennedy Avenue,
	Montr\'eal, QC, Canada H2X 3Y7}

\section{Introduction}\label{section 1}

Throughout this paper, 
all $3$-manifolds are connected, orientable and irreducible.

The L-space conjecture (\hyperref[BGW]{[BGW]}, \hyperref[J]{[J]}) predicts that
the following statements are equivalent for a closed orientable irreducible $3$-manifold $M$:

(1)
$M$ is a non-L-space.

(2)
$\pi_1(M)$ is left-orderable.

(3)
$M$ admits a co-orientable taut foliation.

The implication (3)  $\Longrightarrow$ (1) is confirmed in \hyperref[OS1]{[OS1]}
(see also \hyperref[Bo]{[Bo]}, \hyperref[KazR2]{[KazR2]}).
In the case that $M$ has positive first Betti number,
(2) is proved in \hyperref[BRW]{[BRW]},
and (3) is proved in \hyperref[Ga1]{[Ga1]}.
The L-space conjecture has been verified when
$M$ is a graph manifold (\hyperref[BC]{[BC]}, \hyperref[Ra]{[Ra]}, \hyperref[HRRW]{[HRRW]}).

There are two common ways for constructing closed orientable $3$-manifolds:
Dehn surgeries on knots and links in $S^{3}$,
and Dehn fillings on mapping tori of compact orientable surfaces. 
Both of these two methods can produce all closed orientable $3$-manifolds
(\hyperref[A]{[A]}, \hyperref[Lic]{[Lic]}, \hyperref[W]{[W]}).
A slope on a knot manifold is called 
an \emph{NLS} or \emph{LO} or \emph{CTF filling slope} if the corresponding Dehn filling
satisfies (1) or (2) or (3), respectively,
and it is called an \emph{L-space filling slope} if the corresponding Dehn filling is an L-space.
Similarly, we use the terminologies \emph{NLS, LO, CTF, L-space filling multislopes} for
link manifolds (see Subsection \ref{subsection 2.1} for the convention of multislopes).
We also use the terminologies
\emph{NLS, LO, CTF, L-space surgery (multi)slopes} for knots or links.

For knot manifolds,
particularly knot exteriors in $S^{3}$,
much is known about the set of L-space filling slopes (\hyperref[KMOS]{[KMOS]}, \hyperref[RR]{[RR]}).
Naturally, 
one would expect to 

(1)
Find the LO and CTF filling slopes for knot manifolds and
compare them with the NLS filling slopes.

(2)
Explore the NLS, LO and CTF filling multislopes for link manifolds.

In this paper we focus on the problem of finding
CTF filling (multi)slopes on pseudo-Anosov mapping tori of compact orientable surfaces.
Let $\Sigma$ be a compact orientable surface with negative Euler characteristic and nonempty boundary, 
and let $\varphi: \Sigma \to \Sigma$ be an orientation-preserving pseudo-Anosov homeomorphism.
Let $M = \Sigma \times I / \stackrel{\varphi}{\sim}$ be the mapping torus of $\Sigma$ over $\varphi$,
i.e. the quotient space of $\Sigma \times [0,1]$ under the equivalence relation
$(x,1) \sim (\varphi(x),0)$ for all $x \in \Sigma$.
We fix an orientation on $M$,
and we endow $\Sigma$ with an orientation induced from $M$ (see Convention \ref{orientation} (a) for details).

We choose a canonical meridian/longitude coordinate system on each component $T$ of $\partial M$,
following \hyperref[Ro2]{[Ro2]}.
See Convention \ref{slope} for details on the choice of the coordinate system,
and see Convention \ref{orientation} (b), (c) for their orientations.
If $M$ is the exterior of a fibered knot $K$ in $S^{3}$,
this coordinate system coincides with
the standard meridian/longitude coordinate system except for a special case
(see Remark \ref{slope consistent} for details).
With respect to the canonical coordinate system,
a slope on each component $T$ of $\partial M$ will be identified with
an element of $\mathbb{Q} \cup \{\infty\}$ as usual.

Let $T$ be a component of $\partial M$.
The suspension flow of $\varphi$ in $M$,
denoted $\Psi$,
has $2n$ closed orbits on $T$ for some $n \in \mathbb{N}_+$,
which are parallel essential simple closed curves on $T$.
The slope of these closed orbits is referred to as 
the \emph{degeneracy slope} of $T$,
which we denote by $\delta_T$.
The \emph{degeneracy locus} of $\Psi$ on $T$,
denoted $d(T)$,
is identified with $(nu; nv)$ for $u, v \in \mathbb{Z}$ such that
$\frac{u}{v} = \delta_{T}$,
$u > 0$, and $\gcd(u, v) = 1$
(where $u = 1, v = 0$ if $\delta_T = \infty$).
The integer $n$ is referred to as the \emph{multiplicity} of $d(T)$.
In the case where $\varphi$ preserves all boundary components of $\Sigma$,
$\varphi$ is said to be \emph{right-veering} (resp. \emph{left-veering}) if
$\delta_T > 0$ (resp. $\delta_T < 0$) for each component $T$ of $\partial M$.
See Subsection \ref{degeneracy locus} for
a more detailed explanation and some relevant references.

Let $T$ be a component of $\partial M$.  
Under the canonical coordinate system on $\partial M$ (see Convention \ref{slope} for details),  
the degeneracy slope $\delta_T$ lies in  
$(\mathbb{Q} \cup \{\infty\}) - [-2, 2)$.
In addition,
if $d(T) = (p;q)$,
then $-\frac{1}{2}p < q \leqslant \frac{1}{2}p$,
$\frac{p}{q} = \delta_{T}$,
and $\gcd(p,q)$ is the multiplicity of $d(T)$.

We adopt the following conventions for (multi)slopes and Dehn fillings:

\begin{convention}\rm\label{filling convention}
	(a)
	Let $k$ denote the number of boundary components of $M$.
	For any multislope $\textbf{s} \in (\mathbb{Q} \cup \{\infty\})^{k}$ of $\partial M$,
	$M(\textbf{s})$ denotes the Dehn filling of $M$ along $\partial M$ with the multislope $\textbf{s}$.
	
	(b)
	Throughout this paper,
	for a slope $\frac{p}{q}$ on a component $T$ of $\partial M$,
	we allow $\gcd(p,q) > 1$.
	In this case,
	we consider $\frac{p}{q}$ as
	the corresponding fraction in reduced form,
	i.e.
	$\frac{p}{q}$ is identified with $\frac{u}{v}$ for which
	$u = \frac{p}{\gcd(p,q)}$,
	$v = \frac{q}{\gcd(p,q)}$.
\end{convention}

Now we review some known results on 
CTF filling slopes of $M$.
To be convenient,
we state the following two theorems in our setting,
although they do not need to restrict $\varphi$ to be pseudo-Anosov.

The following theorem is proved by Roberts in \hyperref[Ro1]{[Ro1]}, \hyperref[Ro2]{[Ro2]},
originally stated in \hyperref[Ro2]{[Ro2, Theorem 4.7]} for the case where $\varphi$ is pseudo-Anosov.
For a version that does not require $\varphi$ to be pseudo-Anosov, see \hyperref[DR]{[DR, Theorem 1.4]}.

\begin{thm}[Roberts]\label{Roberts}
	Suppose that $\Sigma$ has exactly one boundary component.
	
	(a)
	If $\varphi$ is right-veering,
	then $M(s)$ admits a co-orientable taut foliation for any rational slope $s \in (-\infty, 1)$.
	
	(b)
	If $\varphi$ is left-veering,
	then $M(s)$ admits a co-orientable taut foliation for any rational slope $s \in (-1, +\infty)$.
	
	(c)
	If $\varphi$ is neither right-veering nor left-veering,
	then $M(s)$ admits a co-orientable taut foliation for any rational slope $s \in (-\infty, +\infty)$.
	
	Moreover,
	the core curve of the filling solid torus is transverse to the foliation in each case.
\end{thm}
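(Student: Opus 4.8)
The plan is to construct, for each rational slope $s$ in the stated interval, a co-orientable taut foliation of $M(s)$ whose restriction to $M$ is a modification of the fibration $p\colon M\to S^{1}$ and whose restriction to the filling solid torus $V=M(s)\setminus\operatorname{int}M$ is the foliation by meridian disks; the ``moreover'' clause is then automatic, since the core of $V$ is transverse to a disk foliation.

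First I would normalize $\varphi$ near $\partial\Sigma$. As $\Sigma$ has one boundary circle, $\varphi$ fixes $\partial\Sigma$ setwise and $\varphi|_{\partial\Sigma}$ is an orientation-preserving circle homeomorphism; after an isotopy of $\varphi$ supported in a collar $\partial\Sigma\times[0,1]\subset\Sigma$ I may assume $\varphi$ there agrees with a fixed model that interpolates, moving into $\Sigma$, from prescribed behaviour on $\partial\Sigma\times\{0\}$ to $\varphi$ on the interior. The fractional Dehn twist coefficient $f=f_{\partial\Sigma}(\varphi)$ records the rotational part of the model, and right-veering / left-veering / the remaining case are exactly $f>0$ / $f<0$ / $f=0$; equivalently, the degeneracy slope $\delta_{T}=1/f$ on $T=\partial M$ (in the canonical coordinates) is finite positive / finite negative / $\infty$.

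Next I would build, following Roberts' spinning construction, a branched surface $B\subset M$ carrying a family of taut foliations. The fibration $p$ itself meets $T$ in the linear foliation by longitudes, of slope $0$. To reach other boundary slopes, enlarge the fiber $\Sigma$ (viewed as a branched surface) by a ``spinning sector'' in a collar $T\times[0,1)$ of $\partial M$ --- letting boundary leaves spiral around $T$ while the collar direction absorbs the spiralling, so the induced foliation on $T=T\times\{0\}$ stays linear --- and add product-disk pieces coming from a hierarchy of the sutured manifold $\Sigma\times I$ so that $B$ fully carries taut foliations (a Gabai-type argument). This yields a monotone family $\{\mathcal{F}_{\tau}\}$ of co-orientable taut foliations of $M$ (co-orientable, being built from the co-oriented fibration by co-orientation-preserving modifications), each transverse to $\partial M$ with $\mathcal{F}_{\tau}|_{\partial M}$ the linear foliation of slope $\sigma(\tau)$. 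The crucial claim is that the set of attained slopes is exactly the open interval named in the theorem: starting from the fibration at slope $0$, one sense of spiralling is unobstructed while the opposite sense hits a barrier --- a Reeb component is forced, or compact leaves in the collar break down --- at slope $1$ when $f>0$, at slope $-1$ when $f<0$, and at no finite slope when $f=0$; the attained slopes therefore fill $(-\infty,1)$, $(-1,+\infty)$, or $(-\infty,+\infty)=\mathbb{R}P^{1}\setminus\{\infty\}$ accordingly.

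Finally, for rational $s$ in the stated interval I would choose $\tau$ with $\sigma(\tau)=s$ and glue to $\mathcal{F}_{\tau}$ the meridian-disk foliation of $V$: the boundary curves of $\mathcal{F}_{\tau}|_{\partial M}$ are slope-$s$ curves, i.e.\ meridians of $V$, so the foliations match and give a co-orientable foliation $\widehat{\mathcal{F}}_{s}$ of $M(s)$. Tautness passes through: a leaf lying in $\operatorname{int}M$ keeps the closed transversal supplied by tautness of $\mathcal{F}_{\tau}$, a leaf meeting $V$ is crossed by the core of $V$, which is a closed transversal, and this same core is transverse to $\widehat{\mathcal{F}}_{s}$. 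The left-veering case follows from the right-veering case by reversing the orientation of the $S^{1}$-factor (this replaces $\varphi$ by $\varphi^{-1}$, hence $f$ by $-f$ and $s$ by $-s$), and the $f=0$ case needs, beyond this $\pm$-symmetry, the direct two-sided version of the crucial claim. The main obstacle is precisely that crucial claim: a careful holonomy analysis of the collar model is required to show both that every slope strictly on the correct side of the endpoint is attained by a genuinely taut carried foliation whose boundary restriction is linear, and that the endpoint $1$ (resp.\ $-1$) really is a barrier, together with the bookkeeping that translates the spinning parameter and the collar coordinates into the canonical meridian--longitude coordinates on $\partial M$.
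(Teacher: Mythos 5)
This theorem is not proved in the paper at all: it is quoted from Roberts' papers [Ro1], [Ro2], so the only meaningful comparison is with Roberts' published argument, whose broad shape (fiber plus product disks forming a branched surface, realization of boundary slopes by the boundary train track, capping off with the meridian-disk foliation of the filling torus, transversality of the core) your outline does echo. But your write-up has a genuine gap exactly where the theorem lives. Everything before and after your ``crucial claim'' is routine; the claim itself --- that the constructed family of foliations attains every rational boundary slope in $(-\infty,1)$ when $\varphi$ is right-veering (resp.\ $(-1,\infty)$, $(-\infty,\infty)$) --- \emph{is} the theorem, and you give no mechanism for it. In particular, the right-veering hypothesis never actually enters your construction: in Roberts' proof the hypothesis is used to choose a specific system of arcs (equivalently, a factorization of the monodromy) so that the resulting branched surface fully carries laminations whose boundary train track realizes all slopes in $(-\infty,1)$, and the endpoint $1$ emerges from the combinatorics of how the product disks meet $\partial M$ (one meridional crossing per longitudinal trip, in the canonical coordinates), not from a holonomy analysis of a collar model. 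Saying ``one sense of spiralling is unobstructed'' does not substitute for exhibiting carried laminations of every slope below $1$ and proving they extend to taut, co-orientable foliations.

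A secondary point: your justification that the endpoint is ``really a barrier --- a Reeb component is forced'' is both unnecessary and false as a statement about $M$. The theorem makes no claim about slopes outside the interval, and indeed the main theorem of the present paper (see Remark \ref{remark}(b)) produces co-orientable taut foliations at slopes at and beyond $1$ for many right-veering pseudo-Anosov monodromies with $(p_1;q_1)\neq(2;1)$; the endpoint $1$ is only a limitation of Roberts' particular construction. Finally, the reduction of (b) to (a) by reversing orientation is fine in spirit, but you should note the coordinate subtlety when $\Delta(\lambda_T,\delta_T)=2$ (Convention \ref{slope}(c) and Remark \ref{slope consistent}), where reversing the orientation changes which curve is called the meridian; and case (c) is not obtained from (a) and (b) by symmetry alone but needs the two-sided realization argument, which again you have only asserted.
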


For the case that 
$\Sigma$ has multiple boundary components,
Kalelkar and Roberts (\hyperref[KalR]{[KalR]}) prove 

\begin{thm}[Kalelkar-Roberts]\label{multi}
	Let $k$ denote the number of boundary components of $M$.
	There is a neighborhood $J \subseteq \mathbb{R}^{k}$ of $(0,\ldots,0)$ such that
	for any rational multislope $(s_1,\ldots,s_k) \in J$,
	$M(s_1,\ldots,s_k)$ admits a co-orientable taut foliation.
	Moreover,
	the core curves of the filling solid tori are transverse to the foliation.
\end{thm}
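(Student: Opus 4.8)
The plan is to adapt, to several boundary tori at once, the collar construction behind Roberts' Theorem~\ref{Roberts}, exploiting that that construction is local in each boundary component of $M$. Only the fact that $\varphi$ is a homeomorphism is needed; the pseudo-Anosov and co-orientation hypotheses of the paper play no role here. Let $p\colon M\to S^{1}$ be the fibration with fiber $\Sigma$ and monodromy $\varphi$, and let $\mathcal F_{0}$ be the co-orientable foliation of $M$ by its fibers, co-oriented by the $S^{1}$-direction; it is tangent to $\partial M$ and meets each $T_i$ in the foliation by fiber-slope curves. Note that the slope $\mathbf 0$ is trivially fine: Dehn filling $M$ along $(0,\dots,0)$ caps off every boundary circle of $\Sigma$, yielding a closed surface bundle over $S^{1}$ whose fibration is a co-orientable taut foliation transverse to the cores of the filling tori. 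The theorem asks for a full neighborhood of $\mathbf 0$.

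\emph{Step 1: local modification near each $T_i$.} I would fix pairwise disjoint collars $\widetilde N_i\cong T^{2}\times[0,1]$ of the boundary tori, with $\mathcal F_{0}$ restricting in $\widetilde N_i$ to the foliation by annuli meeting $T_i$ in fiber-slope curves. Inside each $\widetilde N_i$ I perform the modification from the proof of Theorem~\ref{Roberts}: ``spin'' the fiber annuli in the collar and splice in the appropriate monotone family of leaves, so that the resulting foliation $\mathcal F$ is transverse to $T_i$ away from finitely many closed leaves, restricts on $T_i$ to a suspension-type foliation whose closed leaves have a chosen slope $\rho_i$ ranging over a controlled open interval $I_i$ (depending on $\mathcal F_{0}$ near $T_i$, i.e.\ on $\varphi$ near the corresponding boundary circles of $\Sigma$), and contains no Reeb component. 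Because each modification is supported in $\widetilde N_i$ and the collars are disjoint, the modifications are mutually independent, and I obtain a single co-orientable taut foliation $\mathcal F$ of $M$ realizing any prescribed tuple $(\rho_1,\dots,\rho_k)\in I_1\times\cdots\times I_k$.

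\emph{Step 2: Dehn filling.} Let $I_i'$ be the set of slopes $s_i$ on $T_i$ that can be ``filled against'' some achievable $\rho_i$---that is, for which there is an $\mathcal F$ from Step~1 with $\rho_i\neq s_i$ and with the co-orientation on the side that permits gluing in a foliated solid torus---and put $J:=I_1'\times\cdots\times I_k'$; one checks each $I_i'$ is an open interval about $0$ (it contains $0$ by the sanity check above), so $J$ is an open neighborhood of $\mathbf 0\in\mathbb R^{k}$. Given a rational multislope $\mathbf s=(s_1,\dots,s_k)\in J$, I choose $\mathcal F$ with matching $\rho_i$'s and glue a solid torus $V_i$ to $T_i$ with meridian slope $s_i$. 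By the standard lemma on extending a foliation across a solid torus so as to match a prescribed boundary foliation having a closed leaf of non-meridional slope, with the core curve transverse (this is exactly the capping-off step of Theorem~\ref{Roberts}; cf.\ also Gabai), $\mathcal F$ extends to a co-orientable foliation $\widehat{\mathcal F}$ of $M(\mathbf s)$ with each core of $V_i$ transverse to $\widehat{\mathcal F}$. Finally $\widehat{\mathcal F}$ is taut: no Reeb component is introduced in Step~1 or by the solid-torus caps, and every leaf meets a closed transversal, obtained from the $S^{1}$-direction of the fibration in the bulk (rerouted through the collars) together with the transversals of the standard foliation inside each $V_i$.

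The hard part will be Step~1: making the collar modification precise and, simultaneously for all $i$, guaranteeing both that each boundary foliation $\mathcal F\cap T_i$ is of the fillable type and that no Reeb component is introduced anywhere. The disjointness of the collars $\widetilde N_i$ is what makes the simultaneous control feasible; it is also the reason the conclusion is only a neighborhood of $\mathbf 0$ rather than the large intervals of Theorem~\ref{Roberts}, whose single-boundary proof uses a more global modification that would interact non-trivially with all the boundary tori at once.
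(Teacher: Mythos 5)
There is a genuine gap, and it sits exactly where you flag it: Step 1. First, a point of record: the paper does not prove Theorem \ref{multi} at all --- it quotes it from [KalR], and the Kalelkar--Roberts proof (like Roberts' proof of Theorem \ref{Roberts}, and like Section \ref{section 3} of this paper) is a \emph{global} branched-surface construction: one builds a branched surface from one or several copies of the fiber together with product disks $\gamma \times I$ over a family of properly embedded arcs $\gamma$ chosen according to the monodromy, verifies it is laminar/essential, and shows its boundary train tracks realize all multislopes near $\mathbf{0}$; the resulting laminations are then extended to taut foliations in the filled manifolds. Your sketch attributes to Roberts a collar-local ``spin and splice'' modification, but that is not what Roberts does, and no such local slope-adjustment lemma is available. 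Concretely: the germ of the fibration in a collar $T^{2}\times I$ of $T_i$ is standard (the return map on an annular collar of $\partial \Sigma$ is isotopic to the identity), so a modification supported in disjoint boundary collars, rel the inner tori, cannot see the monodromy and would have to produce a universal realizable interval $I_i$ about $0$ --- and you give no construction of any foliation realizing a boundary slope $\rho_i \neq 0$ this way. The naive interpolation of the boundary slope across the collar is not integrable (rotating the line field in the $I$-direction defines a contact structure, not a foliation), and the standard way to transition between two boundary slopes inside $T^{2}\times I$ inserts a boundary-parallel torus leaf; after Dehn filling that torus bounds a solid torus, and a foliation of a solid torus tangent to its boundary contains a Reeb component, so tautness is destroyed. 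Avoiding both of these forces leaves of the modified foliation to escape the collar, i.e.\ the modification cannot be supported in disjoint collars at all --- which is precisely why the known proofs (Gabai, Roberts, Kalelkar--Roberts, and Section \ref{section 3} here) construct the foliation globally from branched surfaces whose product disks run through the whole fiber.

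Step 2 is comparatively minor but also not quite the mechanism used in these arguments: in the branched-surface approach the lamination/foliation is arranged to meet $T_i$ in curves of slope equal to the filling slope $s_i$, and the filling solid tori are foliated by meridian disks with transverse cores (cf.\ the proof in Subsection \ref{proof} and Theorem \ref{boundary}); ``filling against'' a different boundary slope $\rho_i$ with $\Delta(\rho_i,s_i)\geqslant 2$ requires additional care to keep tautness and a transverse core. But the essential missing idea is Step 1: without a genuinely global construction tied to the monodromy, no boundary slope other than $0$ is realized, so the neighborhood $J$ is never produced.
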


Let $\mathcal{F}^{s}, \mathcal{F}^{u}$ denote the stable and unstable foliations of $\varphi$.
The monodromy $\varphi$ is said to be \emph{co-orientable} if $\mathcal{F}^{s}$ is co-orientable,
and $\varphi$ is said to be \emph{co-orientation-preserving} (resp. \emph{co-orientation-reversing}) if
$\varphi$ is co-orientable and
preserves (resp. reverses) the co-orientation on $\mathcal{F}^{s}$.
Here, 
$\mathcal{F}^{s}$ can be replaced with $\mathcal{F}^{u}$
(see Remark \ref{independent}).
For more information on co-orientable pseudo-Anosov homeomorphisms,
see Subsection \ref{co-orientable}.

For general pseudo-Anosov mapping tori of compact surfaces with more than one boundary component,
co-orientation-preserving is the only known case to have
an explicit multi-interval such that all rational multislopes in it are CTF filling multislopes.
The following theorem is implicitly contained in \hyperref[Ga3]{[Ga3]}.
For the reader’s convenience, we briefly sketch how it is deduced from Gabai’s argument.
By pushing $\partial M$ slightly inward, we obtain from the suspension foliation of $\mathcal{F}^{s}$
a foliation that meets each boundary torus in Reeb annuli separated by circles
(see \hyperref[Ga3]{[Ga3, Operation 2.3.1]}).
Likewise, for each interior singular orbit of the suspension flow of $\varphi$,
we can choose a solid torus whose core is precisely this singular orbit
and whose boundary torus intersects the suspension foliation of $\mathcal{F}^{s}$ in the same way,
namely in Reeb annuli separated by circles.
Removing all such solid tori from $M$,
the resulting manifold satisfies the hypotheses of \hyperref[Ga3]{[Ga3, Operation 2.4.1]},
which ensures that any Dehn filling whose restricted slope on each boundary component
is not isotopic to the core of a Reeb annulus admits a co-orientable taut foliation.
Finally, by assigning to every boundary torus coming from an interior singular orbit
the filling slope corresponding to that of the original solid torus,
we obtain the desired result for Dehn fillings of $M$.

\begin{thm}[Gabai]\label{co-orientation-preserving}
	Assume that $\varphi$ is co-orientable and co-orientation-preserving.
	Let $k$ denote the number of boundary components of $M$.
	For any multislope $(s_1, \ldots, s_k) \in (\mathbb{Q} \cup \{\infty\})^{k}$ such that
	each $s_i$ is not the degeneracy slope of the corresponding boundary component,
	$M(s_1, \ldots, s_k)$ admits a co-orientable taut foliation.
	Moreover,
	the core curves of the filling solid tori are transverse to the foliation.
\end{thm}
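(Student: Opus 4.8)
The plan is to build the foliation directly from the suspension flow, following Gabai's construction in \hyperref[Ga2]{[Ga2]} (the statement above is a translation of that construction into the language of degeneracy slopes in the pseudo-Anosov setting). First I would suspend the stable foliation: since $\mathcal{F}^{s}$ is $\varphi$-invariant, it suspends to a $2$-dimensional singular foliation $W^{s}$ of $M$ saturated by orbits of $\Psi$, namely the weak stable foliation of the pseudo-Anosov flow $\Psi$. Its singular leaves are exactly those through the finitely many singular closed orbits of $\Psi$ --- the interior ones, each with an even number of prongs since $\varphi$ is co-orientable, and the ones lying in $\partial M$. The hypothesis that $\varphi$ is co-orientation-\emph{preserving} is used precisely here: it is equivalent to $W^{s}$ being transversely orientable as a singular foliation, with the prongs at each singular orbit alternating the two sides of the transverse orientation. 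On each boundary torus $T_i$ the induced singular $1$-dimensional foliation $W^{s} \cap T_i$ has all of its closed leaves --- the closed orbits of $\Psi$ in $T_i$ --- of slope $\delta_{T_i}$, with the complementary regions foliated by noncompact leaves spiralling toward these closed leaves.

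Next I would resolve the singular closed orbits to produce a nonsingular co-orientable foliation $\mathcal{G}$ of $M$ transverse to $\partial M$. For an interior singular orbit $\gamma$ of prong number $2n$ one takes a solid-torus neighborhood $N(\gamma)$, onto whose boundary $W^{s}$ limits from $2n$ blades, and inserts inside $N(\gamma)$ a local model (a dynamic blow-up of $\gamma$) matching these blades and compatible with the transverse orientation; the even prong number --- that is, co-orientability --- is what lets this local model exist without an index obstruction. The singular orbits in $\partial M$ are handled by a collar modification keeping $\mathcal{G}$ transverse to $\partial M$ and arranging that $\mathcal{G} \cap T_i$ remains a foliation whose closed leaves all have slope $\delta_{T_i}$ and which carries no Reeb annulus. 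I would then check that $\mathcal{G}$ is taut --- every leaf meets a closed transversal built from a loop transverse to the unstable direction together with short orbit arcs, or else tautness is inherited from the sutured-manifold structure of the fibration, as in \hyperref[Ga2]{[Ga2]}.

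Finally I would Dehn fill one torus at a time. Fix $i$ and the slope $s_i \neq \delta_{T_i}$. Since $s_i \neq \delta_{T_i}$, a curve of slope $s_i$ on $T_i$ meets every closed leaf of $\mathcal{G} \cap T_i$, so after a Roussarie--Thurston spinning of $\mathcal{G}$ in a collar $T_i \times [0,1)$ I may assume $\mathcal{G} \cap T_i$ is, near $T_i$, a suspension foliation transverse to the fibration of $T_i$ by slope-$s_i$ curves. One then fills the solid torus $V_i$ of slope $s_i$ by the standard turbulent foliation of a solid torus along a boundary foliation transverse to its meridians, arranged so that the core of $V_i$ is a closed transversal (this is where $s_i \neq \delta_{T_i}$ is essential: a slope-$\delta_{T_i}$ filling would make the closed leaves of $\mathcal{G} \cap T_i$ meridional and obstruct the transverse extension). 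These modifications are supported in collars, so they preserve tautness and the co-orientation; carrying them out for all $i$ yields a co-orientable taut foliation on $M(s_1,\ldots,s_k)$ to which every filling core is transverse.

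The hard part will be the resolution step together with the boundary bookkeeping: one must choose the dynamic blow-ups at all singular orbits --- interior and on $\partial M$ --- simultaneously and compatibly with a single global transverse orientation, and then verify that the resulting $\mathcal{G}$ is genuinely taut and meets each $T_i$ in a foliation whose only closed-leaf slope is $\delta_{T_i}$, with no Reeb annulus. It is this last fact that pins $\delta_{T_i}$ down as the unique forbidden slope; granting it, the filling step is routine and its only constraint, $s_i \neq \delta_{T_i}$, is exactly the hypothesis of the theorem.
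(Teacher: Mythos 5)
First, a point of comparison: the paper gives no internal proof of this statement --- it is quoted as being implicitly contained in [Ga2] --- so your proposal can only be measured against the standard suspension-flow construction it gestures at, and indeed your outline (suspend $\mathcal{F}^{s}$, resolve the singular orbits using co-orientability, then fill each $T_i$ along any slope other than $\delta_{T_i}$ transversely to the filling core) is the expected strategy; note also that the paper's own new result, the co-orientation-reversing case, is proved by a quite different branched-surface method, so nothing in Section 3 can be recycled to patch your sketch.

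That said, two steps you treat as routine are exactly where the content lies, and as written one of them starts from a false picture. Since $\partial \Sigma$ is a union of leaves and of boundary singularities of $\mathcal{F}^{s}$, the suspended singular foliation contains each boundary torus $T_i$ inside its (singular) leaves; it is tangent to $\partial M$ in a degenerate way, and ``$W^{s} \cap T_i$'' is all of $T_i$, not a $1$-dimensional foliation. The picture you describe --- closed leaves of slope $\delta_{T_i}$ with noncompact leaves spiralling toward them --- is the orbit foliation of $\Psi$ restricted to $T_i$, not the trace of $W^{s}$. To get a foliation transverse to $\partial M$ whose trace on $T_i$ has that form, with closed leaves only at slope $\delta_{T_i}$ and no Reeb annulus, you must split open the boundary leaves (blow air into them) and then foliate the resulting cusped collar region; this is not a ``collar modification keeping $\mathcal{G}$ transverse to $\partial M$'' of something already transverse, and it is precisely this step that makes $\delta_{T_i}$ the unique excluded slope. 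Similarly, at an interior $2n$-pronged singular orbit, ``dynamic blow-up'' is an operation on flows; for the foliation one must split the singular suspended leaf and fill the complementary ideal $2n$-gon bundle over $S^{1}$ with a co-oriented foliation tangent to the boundary annuli. That local model does exist, and evenness of the prong number together with co-orientation-preservation of the return map is exactly what makes it co-orientable, but it has to be exhibited (and shown compatible with tautness and with the boundary work), not dismissed as the absence of an index obstruction. Finally, in the filling step, having no closed boundary leaf of meridional slope is necessary but not sufficient for a transverse-to-the-core extension over the solid torus: you need the boundary foliation to be (isotopable to) a suspension transverse to the meridian disks, i.e.\ Reeb-annulus-free --- you flag this, but it is part of the same deferred boundary analysis. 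In short, the skeleton is right and matches the construction the paper cites, but the essential verifications (boundary splitting and re-foliation, the interior local model, Reeb-annulus exclusion, and tautness of the result) are asserted rather than proved, so as it stands the proposal is an outline of [Ga2]'s argument rather than a proof.
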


As an immediate consequence of the above result, 
for any hyperbolic fibered knot in $S^{3}$ with co-orientation-preserving monodromy, 
the degeneracy slope is the meridian; this conclusion also follows from \hyperref[GO]{[GO, Theorem~5.3]}.

Unlike the co-orientation-preserving case,
many $M(s_1,\ldots,s_k)$ (with each $s_i$ being other than the degeneracy slope) 
admit no co-orientable taut foliation when $\varphi$ is co-orientation-reversing,
since there are many hyperbolic fibered L-space knots in $S^{3}$ and 
lens spaces that 
have co-orientation-reversing monodromy
(see Proposition \ref{pretzel}, Examples \ref{v0751}$\sim$\ref{example1}).
We concentrate on the co-orientation-reversing case in this paper.

\subsection{The main results}\label{subsection 1.1}

Let $T_1, \ldots, T_k$ denote the boundary components of $M$.
For each $i \in \{1, \ldots, k\}$,
we choose a boundary component $C_i$ of $\Sigma$ such that $C_i \times \{0\} \subseteq T_i$,
and let $c_i$ denote the order of $C_i$ under $\varphi$
(i.e. $c_i = \min \{k \in \mathbb{N}_+ \mid \varphi^{k}(C_i) = C_i\}$).
Let $(p_i; q_i)$ denote the degeneracy locus of the suspension flow of $\varphi$ on each $T_i$.
We note that if $\varphi$ is co-orientable,
then $2 \mid p_i$,
and moreover,
if $\varphi$ is co-orientation-reversing,
then $q_i \equiv c_i \text{ } (\text{mod } 2)$
(this is because $2 \mid q_i$ if and only if $\varphi^{c_i}$ is co-orientation-preserving,
see Remark \ref{remark} (a)).

\begin{thm}\label{main}
	Assume that $\varphi$ is co-orientable and co-orientation-reversing
	(then $2 \mid p_i$ and $q_i \equiv c_i \text{ } (\emph{mod } 2)$ for each $i$).
	Then $M(s_1,\ldots,s_k)$ admits a co-orientable taut foliation for 
	any multislope $(s_1,\ldots,s_k) \in (J_1 \times \ldots \times J_k) \cap (\mathbb{Q} \cup \{\infty\})^{k}$,
	where each $J_i$ is a set of slopes on $T_i$ such that
	
	\begin{equation*}
	J_i =
	\begin{cases}
	(-\infty, \frac{p_i}{q_i + c_i}) \cup 
	(\frac{p_i}{q_i - c_i}, +\infty) \cup \{\infty\} & \emph{if }  q_i > c_i > 0 \\
	(-\infty, \frac{p_i}{2q_i}) & \emph{if }  q_i = c_i > 0 \\
	(-\frac{p_i}{c_i - q_i}, \frac{p_i}{q_i + c_i}) & \emph{if } c_i > q_i \geqslant 0 \\
	(-\frac{p_i}{|q_i| + c_i}, \frac{p_i}{c_i - |q_i|}) & \emph{if } - c_i < q_i < 0 \\
	(-\frac{p_i}{2|q_i|}, +\infty) & \emph{if } q_i = -c_i < 0  \\
	(-\infty, -\frac{p_i}{|q_i| - c_i}) \cup 
	(-\frac{p_i}{|q_i| + c_i}, +\infty) \cup \{\infty\} & \emph{if } q_i < -c_i < 0.
	\end{cases}
\end{equation*}
     Moreover,
     the core curves of the filling solid tori are transverse to the leaves of these foliations.
\end{thm}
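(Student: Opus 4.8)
The idea of the proof is to realize the construction behind the earlier theorems of Roberts, Kalelkar--Roberts and Gabai (Theorems \ref{Roberts}, \ref{multi}, \ref{co-orientation-preserving}): for each multislope $\mathbf{s}=(s_1,\ldots,s_k)$ in $(J_1\times\cdots\times J_k)\cap(\mathbb{Q}\cup\{\infty\})^{k}$ I would produce a co-orientable taut foliation $\mathcal{F}_{\mathbf{s}}$ of $M$, transverse to $\partial M$, meeting each $T_i$ in a linear foliation of slope $s_i$; capping each boundary torus off by meridian disks then extends $\mathcal{F}_{\mathbf{s}}$ to a co-orientable taut foliation of $M(\mathbf{s})$ in which the core of each filling solid torus is a closed transversal. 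Unlike the co-orientation-preserving case, the suspension of $\mathcal{F}^{s}$ is \emph{not} co-orientable in $M$, so the new content is precisely to build co-oriented foliations whose co-orientation is tracked all the way around, and to see which boundary slopes survive this.

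First I would build a branched surface $B\subseteq M$ carrying the desired foliations. Cutting $M$ along a fiber $\Sigma$ gives the product sutured manifold $\Sigma\times I$; reglueing by $\varphi$ after inserting near each $T_i$ a family of ``spinning'' annuli built from the monodromy produces a branched surface containing a copy of $\Sigma$ together with these vertical annuli. The point at this stage is to arrange the spinning so that $B$, and hence every foliation it carries, is \emph{co-orientable}: the hypothesis that $\varphi$ reverses the co-orientation on $\mathcal{F}^{s}$, together with the congruences $2\mid p_i$ and $q_i\equiv c_i\pmod 2$ recorded before the statement, is exactly what lets the local co-orientations along the $c_i$ boundary circles in which $\Sigma$ meets $T_i$ be matched up consistently after one pass through $\varphi$. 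I would then verify that $B$ is an essential, sink-disk-free branched surface, so that (by the theory of laminar branched surfaces) every measured lamination fully carried by $B$ completes to a taut foliation, the complementary regions being filled in using the product structure of $\Sigma\times I$; as in Roberts' treatment of pseudo-Anosov punctured-surface bundles, the uniform expansion of $\varphi$ on $\mathcal{F}^{u}$ is what rules out sink disks. This step is technical but follows the by-now standard template, and is not where I expect the main difficulty to lie.

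The crux is the analysis of the boundary train track $\tau_i=B\cap T_i$ and of the slopes it realizes. On $T_i$ the closed orbits of the suspension flow have slope $\delta_{T_i}=\frac{p_i}{q_i}$ and constitute the degeneracy locus $(p_i;q_i)$. I would show that the $c_i$ strands in which $\Sigma$ meets $T_i$ produce a ``fan'' in $\tau_i$ that can twist the boundary of $\Sigma$ by any amount up to $\pm c_i$ (measured in the longitudinal coordinate) relative to the degeneracy direction, while following the co-orientation along the fan --- which is reversed after one full period because $\varphi$ is co-orientation-reversing --- shows that a carried foliation closes up co-orientably exactly when its boundary slope lies in the arc of $\mathbb{R}P^{1}$ between $\frac{p_i}{q_i+c_i}$ and $\frac{p_i}{q_i-c_i}$ that does not contain $\frac{p_i}{q_i}$. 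This arc is $J_i$, and the six displayed cases are simply this description written out according to the signs of $q_i$ and $c_i$, requiring no further argument. (In the co-orientation-preserving case the co-orientation instead closes up after one period, the only obstructed slope is $\delta_{T_i}$ itself, and one recovers Theorem \ref{co-orientation-preserving}.) I expect this to be the main obstacle: keeping precise track of how the co-orientation propagates through the $c_i$ sheets and of how the fractional Dehn twist data enters, so as to land exactly on the endpoints $\frac{p_i}{q_i\pm c_i}$ and exactly on the co-orientability constraint.

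Finally, given $\mathbf{s}\in J_1\times\cdots\times J_k$, for each $i$ I would choose weights on $B$ producing a lamination whose boundary on $T_i$ is a linear foliation of slope $s_i$, complete it to the taut, co-oriented foliation $\mathcal{F}_{\mathbf{s}}$ of $M$ as above (the choices near distinct boundary tori being independent, which is why the admissible set is a product of intervals), and Dehn fill along $\mathbf{s}$. Since the leaves of $\mathcal{F}_{\mathbf{s}}$ meet $T_i$ in $s_i$-curves, capping with meridian disks extends $\mathcal{F}_{\mathbf{s}}$ to a co-orientable taut foliation of $M(\mathbf{s})$ with the core of each filling solid torus transverse to the leaves. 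The requirement that $s_i$ be an interior point of $J_i$ --- rather than an endpoint $\frac{p_i}{q_i\pm c_i}$, at which the spinning construction degenerates, much as it does at $\delta_{T_i}$ in Gabai's case --- is exactly what forces the intervals $J_i$ to be open.
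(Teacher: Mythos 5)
Your overall strategy---a branched surface built from the fiber plus vertical pieces, a laminarity check, an analysis of the boundary train tracks, Li's theorem, and extension to a co-oriented taut foliation of the filling---is the same as the paper's. But the two steps you yourself flag as the content are not carried out, and the mechanism you propose for the crucial one is not the one that works. In the paper the branched surface is $B(\alpha)=(\Sigma\times\{0\})\cup(\alpha\times I)$, where $\alpha$ is a specific union of properly embedded arcs in $\Sigma$ chosen \emph{positively transverse to} $\mathcal{F}^{s}$, each running from a negative stable segment of $\partial\Sigma$ to a positive one. This choice is precisely where co-orientation-reversal is consumed: it forces $\varphi(\alpha)$ to be negatively transverse to $\mathcal{F}^{s}$, so the boundary of any sink disk or half sink disk would be a nullhomotopic (or boundary-parallel) closed curve positively transverse to $\mathcal{F}^{s}$, which is impossible (Lemma \ref{transverse}). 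Your attribution of sink-disk-freeness to ``uniform expansion on $\mathcal{F}^{u}$'' does not engage with this; without specifying the vertical pieces (your ``spinning annuli'') and their transversality to $\mathcal{F}^{s}$, there is no argument that the branched surface is laminar, and this step is not a standard template---it is exactly where the hypothesis on $\varphi$ enters.

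Second, the intervals $J_i$ do not arise from a co-orientability or holonomy constraint on how carried foliations close up. The branched surface $B(\alpha)$ is co-orientable outright (co-orient $\Sigma\times\{0\}$ and give each product disk the compatible co-orientation), so \emph{every} lamination it carries is co-oriented, whatever its boundary slopes. The constraint is combinatorial: because $\varphi$ reverses the co-orientation, the lower endpoint of each positive (resp.\ negative) vertical edge of the boundary train track lies in a negative (resp.\ positive) stable segment and its upper endpoint in a positive (resp.\ negative) one; following these edges around $T_i$ produces carried curves of slopes exactly $\frac{p_i}{q_i+c_i}$ and $\frac{p_i}{q_i-c_i}$ (Proposition \ref{maximal slope}), and adding small positive weights on all edges to achieve full carriage realizes precisely the rational slopes in the open interval between them not containing $\delta_{T_i}$ (Proposition \ref{realize})---which is also why the endpoints are excluded, not because ``the spinning degenerates'' there. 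As written, your proposal asserts that the admissible arc of $\mathbb{R}P^{1}$ is the one between $\frac{p_i}{q_i\pm c_i}$ avoiding $\frac{p_i}{q_i}$ rather than deriving it; the train-track computation (or an equivalent) that lands on these specific endpoints is the missing core of the argument.
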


In the case that $\varphi$ preserves each component of $\partial \Sigma$,
$c_i = 1$ for each $1 \leqslant i \leqslant k$.

\begin{cor}\label{open book}
	Assume that $\varphi$ is co-orientable, co-orientation-reversing and
	$\varphi(C) = C$ for each component $C$ of $\partial \Sigma$.
	Then $M(s_1,\ldots,s_k)$ admits a co-orientable taut foliation for 
	any multislope $(s_1,\ldots,s_k) \in (J_1 \times \ldots \times J_k) \cap (\mathbb{Q} \cup \{\infty\})^{k}$,
	where each $J_i$ is a set of slopes on $T_i$ such that
	
	\begin{equation*}
		J_i =
		\begin{cases}
			(-\infty, \frac{p_i}{q_i + 1}) \cup (\frac{p_i}{q_i - 1}, +\infty) \cup \{\infty\} & \emph{if } q_i > 1 \\
			(-\infty, \frac{p_i}{2}) & \emph{if }  q_i = 1 \\
			(-\frac{p_i}{2}, +\infty) & \emph{if } q_i = -1 \\
			(-\infty, -\frac{p_i}{|q_i| - 1}) \cup (-\frac{p_i}{|q_i| + 1}, +\infty) \cup \{\infty\} & \emph{if } q_i < -1.
		\end{cases}
		\end{equation*}
\end{cor}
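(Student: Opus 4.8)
The plan is to obtain this as an immediate specialization of Theorem \ref{main}. First I would observe that the hypothesis $\varphi(C) = C$ for every boundary component $C$ of $\Sigma$ forces, for each $i$, that $c_i = \min\{k \in \mathbb{N}_+ \mid \varphi^{k}(C_i) = C_i\} = 1$. Hence Theorem \ref{main} applies with all $c_i = 1$, and it remains only to rewrite the intervals $J_i$ of that theorem under this substitution and check that the result matches the displayed formula in the Corollary.

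Next I would run through the six cases defining $J_i$ in Theorem \ref{main}, setting $c_i = 1$ in each. The cases $q_i > c_i > 0$, $q_i = c_i > 0$, $q_i = -c_i < 0$, and $q_i < -c_i < 0$ become $q_i > 1$, $q_i = 1$, $q_i = -1$, and $q_i < -1$ respectively, and a direct substitution (for instance $\tfrac{p_i}{2q_i} \mapsto \tfrac{p_i}{2}$ when $q_i = 1$, and $-\tfrac{p_i}{2|q_i|} \mapsto -\tfrac{p_i}{2}$ when $q_i = -1$) reproduces verbatim the four lines in the displayed formula of the Corollary. The transversality of the core curves of the filling solid tori to the leaves of the foliations carries over unchanged from Theorem \ref{main}.

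The one point I would take care to justify is that the two intermediate cases $c_i > q_i \geqslant 0$ and $-c_i < q_i < 0$ of Theorem \ref{main} cannot occur under the present hypotheses. With $c_i = 1$ they would require $q_i = 0$ or $-1 < q_i < 0$; but since $\varphi$ is co-orientation-reversing we have $q_i \equiv c_i = 1 \pmod{2}$, so $q_i$ is odd and in particular nonzero, while there is no integer strictly between $-1$ and $0$. Hence exactly the four listed cases arise. There is no genuine obstacle here: all of the content lies in Theorem \ref{main}, and the Corollary is a bookkeeping reduction to the $c_i = 1$ situation.
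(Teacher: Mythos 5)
Your proposal is correct and is essentially the paper's own argument: the paper deduces the Corollary from Theorem \ref{main} by noting that $\varphi(C)=C$ for every boundary component gives $c_i=1$, and the four displayed cases are exactly the specialization of the six cases of Theorem \ref{main}. Your extra observation that the cases $c_i > q_i \geqslant 0$ and $-c_i < q_i < 0$ are ruled out by the parity relation $q_i \equiv c_i \pmod 2$ is precisely the fact the paper records in Subsection \ref{subsection 1.1}, so nothing is missing.
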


\begin{remark}\rm\label{remark}
	(a)
	As explained later in Convention \ref{orientation} (b),
	$C_i$ is assigned an orientation induced from $\Sigma$;
	and in Convention \ref{orientation} (c),
	the longitude of $T_i$ is oriented consistently with $C_i$, 
	and the meridian of $T_i$ is oriented consistently with
	the increasing orientation of the second coordinate in $\Sigma \times I / \sim$.
	Note that $p_i$ is the number of singular points of $\mathcal{F}^{s}$ contained in $C_i$.  
	Let $v_1, \ldots, v_{p_i}$ be these singular points, ordered consecutively along the orientation of $C_i$.  
	Then $\varphi^{c_i}(v_j) = v_{j+q_i}$ (mod $p_i$) for each $1 \leqslant j \leqslant p_i$.
	
	(b)
	Assume that $\varphi$ is co-orientable, co-orientation-reversing
	and $\partial \Sigma$ is connected.
	Then the degeneracy slope of $\partial M$ is not $\infty$,
	and thus,
	one of Theorem \ref{Roberts} (a), (b) applies to $M$ and
	Theorem \ref{Roberts} (c) doesn't apply.
	In this case,
	Theorem \ref{main} expands the known range of CTF filling slopes when
	$(p_1;q_1) \ne (2;1)$.
	
	(c)
	In Theorem \ref{main},
	if we regard the set of slopes $\mathbb{R} \cup \{\infty\}$ on each $T_i$ as
	$\mathbb{R}P^{1} \cong S^{1}$,
	then $J_i$ is one of the two open intervals in $\mathbb{R} \cup \{\infty\}$ between
	$\frac{p_i}{q_i + c_i}, \frac{p_i}{q_i - c_i}$ that doesn't contain 
	the degeneracy slope $\frac{p_i}{q_i}$ on $T_i$.
	For convenience,
	we will call $J_i$ an interval and
	call $J_1 \times \ldots \times J_k$ a multi-interval.
\end{remark}

\subsection{Applications to Dehn surgeries on knots}\label{subsection 1.2}

In this subsection, 
we apply Corollary \ref{open book} to Dehn surgeries on knots in $S^{3}$ or spherical manifolds.
At first,
we describe some properties of L-space filling slopes on knot manifolds.

A knot manifold is \emph{Floer simple} if
it has at least two distinct L-space filling slopes
(compare with \hyperref[RR]{[RR, Proposition 1.3]}),
and a knot in a closed $3$-manifold is called an \emph{L-space knot} if its exterior is Floer simple.
We note from \hyperref[RR]{[RR, Theorem 1.6]} that for any Floer simple knot manifold $N$,
there is an interval $\mathcal{L}(N)$ of slopes such that 
$$\{\text{L-space filling slopes of } N\} = \mathcal{L}(N) \cap (\mathbb{Q} \cup \{\infty\}),$$
and in particular,
$\mathcal{L}(N)$ either consists of all slopes except the homological longitude,
or is a closed interval.
We call $\mathcal{L}(N)$ the \emph{maximal L-space filling interval} of $N$.
If $\mathcal{L}(N)$ is a closed interval,
then we call its complement the \emph{maximal NLS filling interval} of $N$.

A knot in a closed $3$-manifold is called a \emph{fibered knot} if
its exterior is a once-punctured surface bundle over $S^{1}$.
Note that all L-space knots in $S^{3}$ are fibered
(\hyperref[Gh]{[Gh]}, \hyperref[Ni]{[Ni]}).
Now, 
we assume that $M$ is the exterior of
a hyperbolic fibered L-space knot $K$ in a closed orientable $3$-manifold $W$,
and that $W$ is either $S^{3}$ or a spherical manifold.
It's known from Theorem \ref{Roberts} and \hyperref[OS1]{[OS1]} that
$\varphi$ is either left-veering or right-veering.
Let $T = \partial M$,
and let $d(T) = (p;q)$ (then $\delta_T = \frac{p}{q}$).
Let $m_K$ denote the slope on $T$ with 
$M(m_K) = W$,
and let $\lambda_T$ denote the longitude of $T$.
It's known from \hyperref[GO]{[GO, Theorem 5.3]} that 
$$\Delta(m_K, d(T)) = \gcd(p, q) \Delta(m_K, \delta_T) < 2$$
since $W$ admits no essential lamination,
where $\Delta(m_K, d(T)), \Delta(m_K, \delta_T)$ denote
the minimal geometric intersection numbers of $m_K, d(T)$ and
of $m_K, \delta_T$.
Thus $m_K$ belongs to one of the following three cases:

\begin{case}\rm\label{case1}
	$m_K = \delta_T$.
\end{case}

\begin{case}\rm\label{case2}
	$\Delta(m_K, \lambda_T) = \Delta(m_K, \delta_T) = 1$.
	It's not hard to observe from $\delta_T \notin [-2,2)$ that
	$m_K \in \{\infty, 1\}$.
	Then either 
	(1) $m_K = \infty$ or
	(2) $m_K = 1$ and $\delta_T = 2$.
	In the possibility (2),
	we can choose the opposite orientation on $M$ to make $m_K$ become $\infty$,
	under the coordinate system induced from this orientation 
	(see Conventions \ref{slope}, \ref{orientation}).
	In the possibility (1),
	$|q| = 1$ and
	$2 \leqslant p \leqslant 4g-2$ (\hyperref[Ga4]{[Ga4]}).
	Thus
	$d(T)$ has multiplicity $1$.
\end{case}

\begin{case}\rm\label{case3}
	$\Delta(m_K, \lambda_T) > \Delta(m_K, \delta_T) = 1$.
	In this case,
	$m_K \notin \{\delta_T, \infty, 1\}$,
	and
	$d(T)$ has multiplicity $1$.
\end{case}

We call $K$ a \emph{type-I} (resp. \emph{type-II}, \emph{type-III}) knot in $W$ if
Case \ref{case1} (resp. Case \ref{case2}, Case \ref{case3}) holds.
As explained in Case \ref{case2},
we may always assume $m_K = \infty$ when $K$ is a type-II knot in $W$.
Let $g$ denote the fibered genus of $M$.
When $W = S^{3}$,
$K$ can only be a type-II knot,
and the maximal NLS filling interval of $M$ is
$(-2g+1, +\infty)$ if
$\varphi$ is left-veering and
is $(-\infty, 2g-1)$ if
$\varphi$ is right-veering.
This result is essentially known from \hyperref[KMOS]{[KMOS]} and \hyperref[OS3]{[OS3]}
(see also \hyperref[BaS]{[BaS, Theorem 1.15]}).

However,
when $W$ is a lens space and $K$ is a type-II knot in $W$,
the maximal NLS filling interval of $M$ doesn't have to be
$(-2g+1, +\infty)$ or $(-\infty, 2g-1)$
(for instance,
Examples \ref{v0751}, \ref{example1}).
Also,
$K$ can be a type-I knot or a type-III knot when $W$ is a lens space
(Examples \ref{v0751}, \ref{general}).

\begin{remark}\rm\label{veering}
	We note that if $\varphi$ is left-veering (resp. right-veering),
	then we can choose an opposite orientation on $W$ to 
	get a right-veering (resp. left-veering) monodromy of $M$.
	In the case of $\delta_T \ne 2$,
	the meridian on $T$ doesn't change,
	and any other slope $s \in \mathbb{Q}$ becomes $-s$ under
	the canonical coordinate system induced from the opposite orientation on $M$.
\end{remark}

We give some examples in the remainder of this subsection.
For $q \in \mathbb{N}$ with $q \geqslant 3$,
the $(-2,3,2q+1)$-pretzel knot $K$ in $S^{3}$ 
is a hyperbolic L-space knot with Seifert genus $g(K) = q+2$.
See \hyperref[Ga2]{[Ga2, Theorem 6.7]} for the proof that $K$ is fibered 
(and for a geometric description of its fibered surface, from which the genus can be verified),
\hyperref[O1]{[O1, Corollary 5]} for the hyperbolicity of $K$,
and \hyperref[OS2]{[OS2, page~1291]} (see also \hyperref[LM]{[LM, Theorem 1]}) for the fact that $K$ is an L-space knot.
We have

\begin{prop}\label{pretzel}
	Let $K$ denote the $(-2,3,2q+1)$-pretzel knot in $S^{3}$ with $q \geqslant 3$.
	We fix an orientation on $S^{3}$ so that $K$ has right-veering monodromy.
	
	(a)
	$K$ has co-orientable and co-orientation-reversing monodromy.
	
	(b)
	The degeneracy slope on $K$ is $4g(K)-2$.
	
	(c)
	All rational slopes in $(-\infty, 2g(K) - 1)$ are CTF surgery slopes of $K$.
	
	(d)
	For each $n \in \mathbb{N}$ with $n \geqslant 2$,
	the $n$-fold cyclic branched cover of $S^{3}$ over $K$,
	denoted $\Sigma_n(K)$,
	admits a co-orientable taut foliation.
\end{prop}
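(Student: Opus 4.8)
My approach is to pin down the pseudo-Anosov invariants of the monodromy $\varphi$ of $N$ (the exterior of $K$), deduce (a) and (b) from them, obtain (c) as a direct application of Corollary \ref{open book}, and prove (d) by passing to the $n$-fold cyclic cover of $N$. By the excerpt's analysis of hyperbolic fibered L-space knots in $S^{3}$ (using Theorem \ref{Roberts} and \hyperref[OS]{[OS]} to see that $\varphi$ is right-veering, as in the hypothesis), we have $m_{K}=\infty$ and the degeneracy locus is $d(\partial N)=(p;q)$ with $|q|=1$, multiplicity $1$, and $2\le p\le 4g(K)-2$ by \hyperref[Ga3]{[Ga3]}. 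The reverse inequality $p\ge 4g(K)-2$ holds for every fibered knot exterior in $S^{3}$: capping the fiber (a once-punctured genus-$g(K)$ surface) with a disk carrying one $p$-pronged singularity yields a foliation on the closed genus-$g(K)$ surface, and Euler--Poincar\'e gives $(2-p)+\sum(2-P_{i})=4-4g(K)$, the sum over the interior singularities (with $P_{i}$ prongs each), so $p=4g(K)-2+\sum(P_{i}-2)\ge 4g(K)-2$ since $P_{i}\ge 3$. Hence $p=4g(K)-2$ and $\mathcal F^{s}$ has no interior singularity. Right-veering forces the fractional Dehn twist coefficient of $\partial\Sigma$ positive, i.e. $\delta_{\partial N}=p/q>0$, so $q=1$ and $\delta_{\partial N}=4g(K)-2$; this is (b). For (a): $q=1$ is odd and the single boundary component is $\varphi$-invariant ($c_{1}=1$), so by the criterion in Remark \ref{remark}(a), $\varphi$ is not co-orientation-preserving; it therefore suffices to show $\varphi$ is co-orientable, after which $\varphi$ is co-orientation-reversing. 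I would establish co-orientability from the explicit fibered structure of the $(-2,3,2q+1)$-pretzel knot (or from the criteria of \hyperref[BB]{[BB, Lemma 4.3]}/\hyperref[DGT]{[DGT]}, or as in \hyperref[Kri]{[Kri]}); this is a special feature of these knots rather than of L-space knots in general, and it is the one step I expect to require genuine work.

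\emph{Part (c).} Apply Corollary \ref{open book} to $M=N$ with $k=1$, $p_{1}=4g(K)-2$, $q_{1}=1$: this is the case $q_{1}=1$, so $J_{1}=(-\infty,p_{1}/2)=(-\infty,2g(K)-1)$, and the corollary gives a co-orientable taut foliation in $N(s)$, transverse to the core of the filling torus, for every rational $s<2g(K)-1$. Since $\delta_{\partial N}=4g(K)-2\ne 2$, the canonical and standard meridian/longitude coordinates on $\partial N$ agree (Remark \ref{slope consistent}), so $N(s)=K(s)$, which is (c). (By \hyperref[KMOS]{[KMOS]} the interval $(-\infty,2g(K)-1)$ is the maximal NLS interval of $N$, so these are exactly the non-L-space surgery slopes of $K$.)

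\emph{Part (d).} Write $\Sigma_{n}(K)=\widetilde N_{n}(\widetilde\mu)$, where $\widetilde N_{n}$ is the $n$-fold cyclic cover of $N$ and $\widetilde\mu$ is the preimage of the meridian (a single curve, since $[m_{K}]$ generates $H_{1}(N;\mathbb Z/n)$), along which one Dehn fills. Because the homological longitude bounds the fiber, the fibration of $N$ pulls back and identifies $\widetilde N_{n}$ with the mapping torus $M_{\varphi^{n}}$; as $\varphi^{n}$ is pseudo-Anosov with the same invariant foliations, it has no interior singularity, $p^{(n)}=4g(K)-2$, it permutes the $4g(K)-2$ boundary prongs by $v_{j}\mapsto v_{j+n}$, so $q^{(n)}$ is the residue of $n$ mod $4g(K)-2$ in $(-(2g(K)-1),\,2g(K)-1]$, and $\varphi^{n}$ is co-orientation-reversing when $n$ is odd and co-orientation-preserving when $n$ is even. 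Matching degeneracy loci across $\partial M_{\varphi^{n}}\to\partial N$ (this map carries the canonical longitude to the longitude, and the preimage of the degeneracy locus has $p_{*}$-image $n$ times the one downstairs) gives $p_{*}\mu^{(n)}=n\,m_{K}+m\,\lambda$ with $m=\frac{n-q^{(n)}}{4g(K)-2}\in\mathbb Z$, whence the filling slope is $\widetilde\mu=\mu^{(n)}-m\,\lambda^{(n)}$, i.e. slope $-1/m\in[-1,0)\cup\{\infty\}$ (equal to $\infty$ exactly when $2\le n\le 2g(K)-1$). It remains to check that this slope lies in the allowed set. For $n$ odd, a short case analysis on the sign and size of the odd integer $q^{(n)}$ shows $-1/m$, resp. $\infty$, lies in the interval $J_{1}$ that Corollary \ref{open book} attaches to $\varphi^{n}$: the complement of $J_{1}$ is an arc around the degeneracy slope $\frac{4g(K)-2}{q^{(n)}}$, which has absolute value $\ge 2$, while $|{-1/m}|\le 1$ (and the value $\infty$ occurs only when $q^{(n)}=n\ge 2$, where $\infty\in J_{1}$). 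For $n$ even, one needs only that the filling slope differs from the degeneracy slope, immediate from the same comparison. Hence $\Sigma_{n}(K)=M_{\varphi^{n}}(\widetilde\mu)$ carries a co-orientable taut foliation by Corollary \ref{open book} ($n$ odd) or Theorem \ref{co-orientation-preserving} ($n$ even), proving (d). The one point I anticipate being non-formal is the co-orientability in (a); the rest is either a direct invocation of Corollary \ref{open book}/Theorem \ref{co-orientation-preserving} or routine homology bookkeeping for the cyclic cover.
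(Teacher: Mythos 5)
Your parts (c) and (d) essentially reproduce the paper's argument and are fine (in (d), note that the complement of $J_1$ need not lie in $\{|x|\geqslant 2\}$ — its endpoints $\frac{p}{|q^{(n)}|\pm 1}$ can have absolute value as small as $\frac{2p}{p+2}$ — but it does lie in $\{|x|>1\}\cup\{\infty\}$ away from the filling slope, so your case analysis still closes). The genuine problems are in (a) and (b), which are exactly the substantive parts of the proposition. In (b), your Euler--Poincar\'e computation has the inequality reversed: capping the fiber with a disk whose singularity has $p$ prongs gives $(2-p)+\sum_i(2-P_i)=4-4g(K)$, hence $p=4g(K)-2-\sum_i(P_i-2)\leqslant 4g(K)-2$, with equality if and only if $\mathcal{F}^{s}$ has \emph{no} interior singularities. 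So Euler--Poincar\'e only recovers the upper bound already quoted from \hyperref[Ga3]{[Ga3]}; your claimed lower bound $p\geqslant 4g(K)-2$ ``for every fibered knot exterior in $S^{3}$'' is false in general (it fails whenever the monodromy has an interior singularity), and with it your derivation of $p=4g(K)-2$ collapses. The actual content of (b) is precisely that the stable foliation of the $(-2,3,2q+1)$-pretzel monodromy has no interior singularity, which must be proved, not assumed.

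In (a) you explicitly defer the co-orientability of $\mathcal{F}^{s}$ to ``the explicit fibered structure'' or to \hyperref[BB]{[BB]}/\hyperref[DGT]{[DGT]}/\hyperref[Kri]{[Kri]} without an argument; together with the missing no-interior-singularity statement, this is the heart of the proposition and is left unproved. The paper handles both points with one mechanism: since $K$ is a hyperbolic L-space knot, $\Delta(\mu,\delta)=1$, so by \hyperref[F]{[F]} the suspension flow of $\phi^{2}$ descends to a pseudo-Anosov flow on the double branched cover $\Sigma_2(K)$, which is Seifert fibered over $S^{2}(2,3,2q+1)$; by Barbot \hyperref[Ba]{[Ba]} this flow is an $\mathbb{R}$-covered Anosov flow, so it has no singular orbits (giving no interior singularities, hence $p=4g(K)-2$ and (b)), and by Brittenham \hyperref[Br]{[Br]} its weak stable foliation can be isotoped transverse to the Seifert fibers, whose coherent orientation (the base orbifold is orientable) co-orients it; hence $\phi^{2}$ is co-orientation-preserving, $\phi$ is co-orientable, and right-veering forces $\phi$ to be co-orientation-reversing. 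Without supplying this (or an equivalent input such as \hyperref[FS]{[FS]} for the degeneracy slope plus an independent co-orientability argument), your proposal does not establish (a) or (b), and (c), (d) depend on them.
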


\begin{remark}\rm
	(1)
	Proposition \ref{pretzel} (c) has been proved in \hyperref[Kri]{[Kri]}.
	
	(2)
	In Proposition \ref{pretzel} (d),
	the foliations are obtained from Theorem \ref{co-orientation-preserving}
	when $n$ is even and from Corollary \ref{open book} when $n$ is odd.
	When $n \geqslant 4g(K) - 2 = 4(q+2)-2 = 4q+6$,
	it can be deduced from Theorem \ref{Roberts} that 
	$\Sigma_n(K)$ admits a co-orientable taut foliation
	(see \hyperref[BH]{[BH]} for an explanation).
	It follows from (d) and \hyperref[OS1]{[OS1]} that
	$\Sigma_n(K)$ is a non-L-space.
	In fact,
	it's already known from \hyperref[BBG]{[BBG, Corollary 1.2]} that
	$\Sigma_n(K)$ is a non-L-space
	(see \hyperref[Nie]{[Nie, Subsection 3.2]} for an explanation that
	\hyperref[BBG]{[BBG, Corollary 1.2]} can be applied to $\Sigma_n(K)$).
	By \hyperref[BGH]{[BGH, Theorem 1.2]},
	$\Sigma_n(K)$ also has left-orderable fundamental group.
\end{remark}

It follows that

\begin{cor}\label{Pretzel}
	For every $(-2,3,2q+1)$-pretzel knot $K$ in $S^{3}$ with $q \geqslant 3$,
    and for each $n \geqslant 2$,
    the $n$-fold cyclic branched cover of $K$ satisfies 
	each of (1), (2), (3) of the L-space conjecture.
\end{cor}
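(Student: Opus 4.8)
The plan is to read all three conditions of the L-space conjecture off Proposition \ref{pretzel}(d) together with two facts recalled in the introduction. Fix $q \geqslant 3$ and $n \geqslant 2$, and set $Y = \Sigma_n(K)$, the $n$-fold cyclic branched cover of $S^{3}$ over the $(-2,3,2q+1)$-pretzel knot $K$. Then $Y$ is closed, orientable and irreducible (the last point because $K$ is a nontrivial prime knot, so the equivariant sphere theorem applies; alternatively it follows once a taut foliation has been produced), so the three statements of the L-space conjecture are meaningful for $Y$. By Proposition \ref{pretzel}(d), $Y$ carries a co-orientable taut foliation, which is exactly condition (3).

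For condition (1) I would apply the implication (3) $\Longrightarrow$ (1) confirmed in \hyperref[OS]{[OS]} and recalled in the introduction: since $Y$ admits a co-orientable taut foliation, $Y$ is a non-L-space. For condition (2), $Y$ is the $n$-fold cyclic branched cover of $S^{3}$ along an L-space knot (recall that $K$ is a hyperbolic L-space knot with $g(K) = q+2$), so \hyperref[BGH]{[BGH, Theorem 1.2]} applies and yields that $\pi_1(Y)$ is left orderable.

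There is essentially no obstacle here: the substantive work lies entirely in Proposition \ref{pretzel}, whose part (d) is obtained by feeding the even-$n$ case into Theorem \ref{co-orientation-preserving} and the odd-$n$ case into Corollary \ref{open book}. The only step in assembling the corollary that deserves a word of care is checking that the hypotheses of \hyperref[BGH]{[BGH, Theorem 1.2]} are met for these branched covers, which is immediate once $K$ is known to be an L-space knot.
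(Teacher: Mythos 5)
Your proposal is correct and follows essentially the same route as the paper: condition (3) comes from Proposition \ref{pretzel}(d), condition (1) from the implication (3) $\Longrightarrow$ (1) of \hyperref[OS]{[OS]}, and condition (2) from \hyperref[BGH]{[BGH, Theorem 1.2]}, exactly as in the remark following Proposition \ref{pretzel}. The only cosmetic difference is your added justification that \hyperref[BGH]{[BGH]} applies because $K$ is an L-space knot (hence fibered and strongly quasi-positive), which the paper leaves implicit.
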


Furthermore,

\begin{prop}\rm\label{no interior singularity}
	Let $K$ be a hyperbolic L-space knot in $S^{3}$ with co-orientation-reversing monodromy.
	If the stable foliation of its monodromy has no singularity in the interior of the fibered surface,
	then a manifold obtained from Dehn surgery on $K$ admits a co-orientable taut foliation
	if and only if it is a non-L-space.
\end{prop}

In the census \hyperref[D3]{[D3]}, 
there are $3242$ $1$-cusped hyperbolic fibered $3$-manifolds,
each of which is the mapping torus of a once-punctured surface such that
the stable foliation of its monodromy only has singularities with an even number of prongs.
Among them,
$805$ have co-orientation-preserving monodromy,
$2214$ have co-orientation-reversing monodromy,
and the remaining $223$ don't have co-orientable monodromy.
Let $\mathcal{N}$ denote the set of these $2214$ manifolds with 
co-orientation-reversing monodromy.
For each $N \in \mathcal{N}$,
we can obtain a CTF filling interval from Corollary \ref{open book}.
In \hyperref[D1]{[D1]},
the manifolds in $\mathcal{N}$ are tested for being Floer simple,
and some Dehn fillings of them are tested for being L-spaces.
We can find some L-space filling slopes and NLS filling slopes of the manifolds in $\mathcal{N}$ from
\hyperref[D2]{[D2]} (the data associated to \hyperref[D1]{[D1]}).

We list some examples in $\mathcal{N}$ below with
explicit CTF filling intervals obtained from Corollary \ref{open book},
as well as other data,
such as fibered genera, degeneracy slopes, and maximal NLS filling intervals.
We first explain how this data is obtained.
For each $1$-cusped manifold $N$ in Examples \ref{v0751}$\sim$\ref{nonsharp},

$\bullet$
The fibered genus and the degeneracy slope of $N$ are known from \hyperref[D3]{[D3]}.
By comparing \hyperref[D2]{[D2]} with
snapPy \hyperref[CDGW]{[CDGW]},
we can know that $N$ is the complement of a hyperbolic fibered L-space knot in 
$S^{3}$ or some spherical manifold of
type-II or type-III
($N$ is also possible to be the complement of a type-I knot in another spherical manifold).
So the degeneracy locus of the suspension flow has multiplicity $1$ (see Cases \ref{case2}, \ref{case3}).
From the degeneracy slope of $N$,
we can obtain a CTF filling interval from Corollary \ref{open book} directly.

$\bullet$
If $N$ is contained in Examples \ref{v0751}$\sim$\ref{nonspherical},
then the two endpoints of the obtained CTF filling interval of $N$ are verified to be
L-space filling slopes in \hyperref[D2]{[D2]}.
Combined with \hyperref[OS1]{[OS1]} and \hyperref[RR]{[RR]},
the obtained CTF filling interval of $N$ is exactly the maximal NLS filling interval of $N$.

Let $L(p,q)$ denote the lens space obtained from
Dehn surgery along the unknot in $S^{3}$ with the slope $-\frac{p}{q}$.
If we write $L(p,q)$ such that $p$ is specified but $q$ is not specified,
that means it's valid for the given $p$ and some $q$,
for example,
$L(5,q)$ denotes an unspecified element of $\{L(5,1), L(5,2)\}$.
Also note that
all slopes in the following examples are consistent with Convention \ref{slope} 
(instead of the slopes in snapPy).
Similar to Convention \ref{filling convention},
for any $1$-cusped $3$-manifold $N$,
we denote by $N(s)$ the Dehn filling of $N$ with slope $s$ for any 
$s \in \mathbb{Q} \cup \{\infty\}$.

\begin{ep}\rm\label{v0751}
	In Table \ref{table 1},
	we list some examples in $\mathcal{N}$, 
	each of which has three distinct lens space Dehn fillings,
	and it can be identified with 
	the complement of a type-I$\sim$III knot in these three lens spaces respectively.
	The obtained CTF filling intervals of these examples are equal to
	their maximal NLS filling intervals.
	
	\begin{table}[h]\label{table 1}
		\caption{Some examples with three distinct lens space Dehn fillings.}
	\begin{tabular}{| l | l | l | l | l |}
		\hline
		manifold & genus & degeneracy slope & obtained CTF filling interval & maximal NLS filling interval \\
		\hline
		$m122$ & $g = 2$ & $\delta = 4$ & $(-\infty, 2)$ 
		& $(-\infty, 2)$  \\ \cline{2-5}
		& \multicolumn{4}{| l |}{$m122(4) = L(28, q_1)$, 
			$m122(5) = L(35, q_2)$, 
			$m122(\infty) = L(7, q_3)$.}  \\ 
		\hline
		$m280$ & $g = 2$ & $\delta = -4$ & $(-2, +\infty)$ 
		& $(-2, +\infty)$  \\ \cline{2-5}
		& \multicolumn{4}{| l |}{$m280(-4) = L(44, q_1)$, 
			$m280(-3) = L(33, q_2)$, 
			$m280(\infty) = L(11, q_3)$.}  \\ 
		\hline
		$v0751$ & $g = 3$ & $\delta = -6$ & $(-3, +\infty)$ 
		& $(-3, +\infty)$  \\ \cline{2-5}
		& \multicolumn{4}{| l |}{$v0751(-6) = L(78, q_1)$, 
			$v0751(-5) = L(65, q_2)$, 
			$v0751(\infty) = L(13, q_3)$.}  \\ 
		\hline
	\end{tabular}
\end{table}
\end{ep}

\begin{ep}\rm\label{general}
	In Table \ref{table 2},
	we give some more examples in $\mathcal{N}$ which are complements of type-III knots in lens spaces, 
	and the obtained CTF filling intervals of them are equal to
	their maximal NLS filling intervals.
	
	\begin{table}[h]\label{table 2}
		\caption{Some examples which are complements of type-III knots in lens spaces.}
	\begin{tabular}{| l | l | l | l | l |}
		\hline
		manifold & genus & degeneracy slope & obtained CTF filling interval & maximal NLS filling interval \\
		\hline
		$s297$ & $g = 2$ & $\delta = -6$ & $(-3,+\infty)$ & $(-3,+\infty)$ \\ \cline{2-5}
		& \multicolumn{4}{| l |}{$s297(-5) = L(30,q)$.}  \\ 
		\hline
		$s408$ & $g = 3$ & $\delta = -8$ & $(-4,+\infty)$ & $(-4,+\infty)$ \\ \cline{2-5}
		& \multicolumn{4}{| l |}{$s408(-7) = L(42,q)$.}  \\ 
		\hline
		$o9_{26541}$ & $g = 3$ & $\delta = -\frac{8}{3}$ & $(-\infty, -4) \cup (-2, +\infty) \cup \{\infty\}$
		& $(-\infty, -4) \cup (-2, +\infty) \cup \{\infty\}$ \\ \cline{2-5}
		& \multicolumn{4}{| l |}{$o9_{26541}(-3) = L(87,q)$.}  \\ 
		\hline
	\end{tabular}
\end{table}
\end{ep}

\begin{ep}\rm\label{example1}
	In Table \ref{table 3},
	we list some examples in $\mathcal{N}$,
	each of which is the complement of a type-II knot in some $L(p,q)$ with relatively small $p$,
	and its obtained CTF filling interval is exactly
	its maximal NLS filling interval.
	
	\begin{table}[h]\label{table 3}
		\caption{Some examples which are complements of type-II knots in $L(p,q)$ with relatively small $p$.}
	\begin{tabular}[H]{| l | l | l | l | l | l |}
		\hline
		manifold & genus & degeneracy & obtained CTF filling & maximal NLS filling & lens space filling  \\
		& & slope & interval & interval &   \\
		\hline
		$m146$ & $g = 3$ & $\delta = -10$ & $(-5, +\infty)$ & $(-5, +\infty)$ & $m146(\infty) = \mathbb{R}P^{3}$ \\ \cline{2-6}
		\hline
		$v2585$ & $g = 4$ & $\delta = 14$ & $(-\infty,7)$ & $(-\infty, 7)$ & $v2585(\infty) = \mathbb{R}P^{3}$ \\ \cline{2-6}
		\hline
		$m036$ & $g = 2$ & $\delta = -6$ & $(-3, +\infty)$ & $(-3, +\infty)$ & $m036(\infty) = L(3,1)$ \\ \cline{2-6}
		\hline
		$s313$ & $g = 3$ & $\delta = -10$ & $(-5, +\infty)$ & $(-5, +\infty)$ & $s313(\infty) = L(3,1)$ \\ \cline{2-6}
		\hline
		$v3327$ & $g = 3$ & $\delta = 10$ & $(-\infty, 5)$ & $(-\infty, 5)$ & $v3327(\infty) = L(3,1)$ \\ \cline{2-6}
		\hline
		$v3248$ & $g = 4$ & $\delta = -14$ & $(-7, +\infty)$ & $(-7, +\infty)$ & $v3248(\infty) = L(3,1)$ \\ \cline{2-6}
		\hline
		$o9_{36980}$ & $g = 4$ & $\delta = 14$ & $(-\infty, 7)$ & $(-\infty, 7)$ & $o9_{36980}(\infty) = L(3,1)$ \\ \cline{2-6}
		\hline
		$s479$ & $g = 2$ & $\delta = 6$ & $(-\infty, 3)$ & $(-\infty, 3)$ & $s479(\infty) = L(4,1)$ \\ \cline{2-6}
		\hline
		$v2296$ & $g = 3$ & $\delta = -10$ & $(-5, +\infty)$ & $(-5, +\infty)$ & $v2296(\infty) = L(4,1)$ \\ \cline{2-6}
		\hline
		$t11938$ & $g = 3$ & $\delta = -10$ & $(-5, +\infty)$ & $(-5, +\infty)$ & $t11938(\infty) = L(4,1)$ \\ \cline{2-6}
		\hline
		$t11829$ & $g = 4$ & $\delta = -14$ & $(-7, +\infty)$ & $(-7, +\infty)$ & $t11829(\infty) = L(4,1)$ \\ \cline{2-6}
		\hline
		$o9_{39567}$ & $g = 5$ & $\delta = -18$ & $(-9, +\infty)$ & $(-9, +\infty)$ & $o9_{39567}(\infty) = L(4,1)$ \\ \cline{2-6}
		\hline
		$v1682$ & $g = 2$ & $\delta = -6$ & $(-3, +\infty)$ & $(-3, +\infty)$ & $v1682(\infty) = L(5,q)$ \\ \cline{2-6}
		\hline
		$t07148$ & $g = 3$ & $\delta = -10$ & $(-5, +\infty)$ & $(-5, +\infty)$ & $t07148(\infty) = L(5,q)$ \\ \cline{2-6}
		\hline
		$t09415$ & $g = 3$ & $\delta = 8$ & $(-\infty, 4)$ & $(-\infty, 4)$ & $t09417(\infty) = L(5,q)$ \\ \cline{2-6}
		\hline
		$o9_{35581}$ & $g = 3$ & $\delta = -10$ & $(-5, +\infty)$ & $(-5, +\infty)$ & $o9_{35581}(\infty) = L(5,q)$ \\ \cline{2-6}
		\hline
		$t06948$ & $g = 4$ & $\delta = 12$ & $(-\infty, 6)$ & $(-\infty, 6)$ & $t06948(\infty) = L(5,q)$ \\ \cline{2-6}
		\hline
		$o9_{35298}$ & $g = 4$ & $\delta = 14$ & $(-\infty, 7)$ & $(-\infty, 7)$ & $o9_{35298}(\infty) = L(5,q)$ \\ \cline{2-6}
		\hline
		$t04406$ & $g = 2$ & $\delta = -6$ & $(-3, +\infty)$ & $(-3, +\infty)$ & $t04406(\infty) = L(6,1)$ \\ \cline{2-6}
		\hline
		$o9_{18836}$ & $g = 3$ & $\delta = -10$ & $(-5, +\infty)$ & $(-5, +\infty)$ & $o9_{18836}(\infty) = L(6,1)$ \\ \cline{2-6}
		\hline
		$v2678$ & $g = 2$ & $\delta = 6$ & $(-\infty, 3)$ & $(-\infty, 3)$ & $v2678(\infty) = L(7,q)$ \\ \cline{2-6}
		\hline
		$o9_{10415}$ & $g = 2$ & $\delta = -6$ & $(-3, +\infty)$ & $(-3, +\infty)$ & $o9_{10415}(\infty) = L(7,q)$ \\ \cline{2-6}
		\hline
		$t05634$ & $g = 3$ & $\delta = 8$ & $(-\infty, 4)$ & $(-\infty, 4)$ & $t05634(\infty) = L(7,q)$ \\ \cline{2-6}
		\hline
		$o9_{14062}$ & $g = 3$ & $\delta = -6$ & $(-3, +\infty)$ & $(-3, +\infty)$ & $o9_{14062}(\infty) = L(7,q)$ \\ \cline{2-6}
		\hline
		$o9_{34972}$ & $g = 4$ & $\delta = -12$ & $(-6, +\infty)$ & $(-6, +\infty)$ & $o9_{34972}(\infty) = L(7,q)$ \\ \cline{2-6}
		\hline
		$v1076$ & $g = 2$ & $\delta = 4$ & $(-\infty, 2)$ & $(-\infty, 2)$ & $v1076(\infty) = L(9,q)$ \\ \cline{2-6}
		\hline
	$o9_{21619}$ & $g = 3$ & $\delta = -8$ & $(-4, +\infty)$ & $(-4, +\infty)$ & $o9_{21619}(\infty) = L(9,q)$ \\ \cline{2-6}
	\hline
\end{tabular}
\end{table}
\end{ep}

\vspace{8em}

\begin{ep}\rm\label{nonspherical}
	Table \ref{table 4} provides 
	some examples in $\mathcal{N}$ which are complements of type-II knots in some spherical manifolds 
	other than lens spaces, 
	and their obtained CTF filling intervals are equal to their maximal NLS filling intervals.
	
	\begin{table}[h]\label{table 4}
		\caption{Some examples which are complements of type-II knots in spherical manifolds other than lens spaces.}
	\begin{tabular}{| l | l | l | l | l |}
		\hline
		manifold & genus & degeneracy slope & obtained CTF filling interval & maximal NLS filling interval \\
		\hline
		$t08752$ & $g = 2$ & $\delta = -6$ & $(-3,+\infty)$ & $(-3,+\infty)$ \\ \cline{2-5}
		& \multicolumn{4}{| l |}{$t08752(\infty) = W$ for some prism manifold $W$ with $|H_1(W)| = 8$.}  \\ 
		\hline
		$o9_{23699}$ & $g = 2$ & $\delta = -6$ & $(-3,+\infty)$ & $(-3,+\infty)$ \\ \cline{2-5}
		& \multicolumn{4}{| l |}{$o9_{23699}(\infty) = W$ for some tetrahedral manifold $W$ with $|H_1(W)| = 9$.}  \\ 
		\hline
	\end{tabular}
\end{table}
\end{ep}

\begin{ep}\rm\label{nonsharp}
	The CTF filling interval obtained from Corollary \ref{open book} 
	may not be the maximal NLS filling interval for every Floer simple $1$-cusped manifold satisfying
	the assumptions of Corollary \ref{open book}.
	For instance,
	$o9_{19364}$ is an example in $\mathcal{N}$ which
	is the complement of an L-space knot in $S^{3}$ with
	Seifert genus $14$ and degeneracy slope $48$.
	Corollary \ref{open book} gives a CTF filling interval $(-\infty,24)$,
	but the maximal NLS filling interval of $o9_{19364}$ is $(-\infty, 27)$.
\end{ep}

\subsection{Organization}

In Section \ref{section 2}, 
we set up some conventions and review some background material on
surface homeomorphisms,
degeneracy loci of suspension flows,
co-orientable pseudo-Anosov maps,
and branched surfaces.
We prove Theorem \ref{main} in Section \ref{section 3}.
In Subsection \ref{subsection 3.1},
we construct a branched surface $B(\alpha)$ in $M$.
We prove that $B(\alpha)$ is a laminar branched surface in Subsection \ref{subsection 3.2}.
In Subsection \ref{subsection 3.3},
we describe the boundary train tracks of $B(\alpha)$ in $\partial M$ and choose some simple closed curves carried by them.
In Subsection \ref{subsection 3.4},
we prove that the boundary train tracks of $B(\alpha)$ realize all rational multislopes contained in
the multi-interval given in Theorem \ref{main}.
The proof of Theorem \ref{main} is completed in Subsection \ref{proof}.
In Section \ref{section 4},
we prove Propositions \ref{pretzel} and \ref{no interior singularity}.

\section{Preliminaries}\label{section 2}

\subsection{Conventions}\label{subsection 2.1}

For a set $X$,
let $|X|$ denote the cardinality of $X$.
For two metric spaces $A$ and $B$, 
let $A \setminus \setminus B$ denote the closure of $A - B$ under the path metric.

For a link manifold $N$ such that $\partial N$ is a union of tori $\bigcup_{i=1}^{n}T_i$,
a \emph{multislope} on $\partial N$ is 
an $n$-tuple of slopes on
$T_1, \ldots, T_n$ respectively.
For a link in some closed $3$-manifold,
the multislopes on it are defined as
the multislopes on its exterior.

Now we illustrate our conventions of slopes and orientations for
pseudo-Anosov mapping tori of compact orientable surfaces.
Let $\Sigma$ be a compact orientable surface with 
negative Euler characteristic and nonempty boundary,
and
let $\varphi: \Sigma \to \Sigma$ be an orientation-preserving pseudo-Anosov homeomorphism.
Let $M = \Sigma \times I / \stackrel{\varphi}{\sim}$ be the mapping torus of $\Sigma$ over $\varphi$,
and we fix an orientation on $M$.

\begin{notation}\rm
(a)
For two slopes $\alpha, \beta$ in the same boundary component of $M$,
let $\Delta(\alpha, \beta)$ denote the minimal geometric intersection number of $\alpha, \beta$.
	
(b)
For two closed oriented curves $\gamma, \eta$ in the same boundary component of $M$,
let $\langle \gamma, \eta \rangle$ denote the algebraic intersection number of $\gamma, \eta$
(see Convention \ref{orientation} (d) for more details on this setting).
\end{notation}

We adopt the following conventions for slopes on the boundary components of $M$,
as described in
\hyperref[Ro2]{[Ro2]}:

\begin{convention}[Slope conventions]\rm\label{slope}
	Let $T$ be a boundary component of $M$.
	Let $\delta_T$ denote the degeneracy slope of $T$.
	
	(a)
	We call the slope of $T \cap (\Sigma \times \{0\})$ on $T$ the \emph{longitude} of $T$ and
	denote it by $\lambda_T$.
	We choose a slope $\mu_T$ on $T$ such that
	$\Delta(\lambda_T, \mu_T) = 1$ and
	\begin{center}
		$\Delta(\mu_T,\delta_T) \leqslant \Delta(s,\delta_T)$ for any
		slope $s$ of $T$ with $\Delta(\lambda_T, s) = 1$,
	\end{center}
	and we call $\mu_T$ the \emph{meridian} of $T$.
	We fix a canonical orientation on $\lambda_T$ and on $\mu_T$,
	see Convention \ref{orientation} (c) for details.
	Note that $\mu_T$ has a unique choice if $\Delta(\lambda_T, \delta_T) \ne 2$.
	See (c) for the choice of $\mu_T$ in the case of $\Delta(\lambda_T, \delta_T) = 2$.
	
	(b)
	For an essential simple closed curve $\gamma$ on $T$,
	we identify the slope of $\gamma$ with the number
	$$\frac{\langle \gamma, \lambda_T \rangle}{\langle \mu_T, \gamma \rangle} \in \mathbb{Q} \cup \{\infty\}.$$
	
	(c)
	If $\Delta(\lambda_T, \delta_T) = 2$,
	then there are two choices of $\mu_T$ satisfying (a).
	$\delta_T$ is equal to $-2$, $2$ in these two choices respectively.
	We choose $\mu_T$ so that $\delta_T = 2$.
\end{convention}

Next,
we assign orientations to $\Sigma \times \{0\}$ and the meridians and longitudes on $\partial M$.

\begin{convention}[Orientation conventions]\rm\label{orientation}
	(a)
	Note that each orientation on $\Sigma \times \{0\}$ determines a normal vector field
	with respect to the orientation on $M$.
	We orient $\Sigma \times \{0\}$ so that
	the induced normal vector field is consistent with 
	the increasing orientation on the second coordinate in $\Sigma \times I$.
	Accordingly, we assign $\Sigma$ the orientation induced from $\Sigma \times \{0\}$.
	
	(b)
	In our context,
    the left and right sides of any oriented curve in $\Sigma$ will always
    be with respect to the orientation on $\Sigma$.
	Each boundary component of $\Sigma$ has an induced orientation with the following property:
	for any properly embedded arc $\gamma: I \to \Sigma$,
	choose a normal vector field $\{v(x) \mid x \in \gamma(I)\}$ pointing to the right side of $\gamma$,
	then $v(\gamma(0))$ is consistent with the positive orientation on $\partial \Sigma$ and
	$v(\gamma(1))$ is consistent with the negative orientation on $\partial \Sigma$.
	For each component $C$ of $\partial \Sigma$,
	we also orient $C \times \{0\} \subseteq \partial M$ with the orientation induced from $C$.
	
	(c)
	Let $T$ be a component of $\partial M$.
	We assign the longitude of $T$ the orientation consistent with 
	the positive orientation on each component of $T \cap (\partial \Sigma \times \{0\})$.
	We choose a curve $\gamma$ on $T$ such that 
	$\gamma$ represents the meridian on $T$ and $\gamma$ is transverse to the fibered surfaces 
	$\{\Sigma \times \{t\} \mid t \in I\}$,
	and we orient $\gamma$ consistently with the increasing orientation on the second coordinate.
	We assign the meridian on $T$ the orientation consistent with $\gamma$.
	
	(d)
	For a boundary component $T$ of $M$,
	we set $\langle \mu_T, \lambda_T \rangle = - \langle \lambda_T, \mu_T \rangle = 1$,
	where $\mu_T, \lambda_T$ denotes the meridian and longitude on $T$ respectively.
\end{convention}

At last,
we describe the relation between our canonical coordinate system on $\partial M$ and
the standard meridian/longitude coordinate system for knots in $S^{3}$,
when $M$ is the exterior of a fibered knot in $S^{3}$.

\begin{remark}\rm\label{slope consistent}
	Let $M$ be the exterior of a fibered knot $K$ in $S^{3}$,
	and let $T = \partial M$.
	We fix an orientation on $M$.
	Let $(\mu_K, \lambda_K)$ denote the standard coordinate system for $K$ in $S^{3}$,
	where $\mu_K, \lambda_K$ are the standard meridian and longitude of $K$ in $S^{3}$.
    As discussed in \hyperref[Ro2]{[Ro2, Corollary 7.4]}, exactly one of the following two cases occurs:
	\begin{itemize}
		\item[(1)] $\delta_T$ doesn't have slope $-2$ with respect to $(\mu_K, \lambda_K)$, 
		in which case $\mu_K = \mu_T$ and $\lambda_K = \lambda_T$.
		
		\item[(2)] $\delta_T, \mu_T$ have slopes $-2, -1$, respectively, with respect to $(\mu_K, \lambda_K)$.
	\end{itemize}
	
	Now suppose that the case (2) holds. Then $\Delta(\lambda_T, \delta_T) = 2$.  
	The opposite orientation on $M$ induces another canonical coordinate system $(\mu^{'}_{T}, \lambda^{'}_{T})$ on $T$,
	where the longitude $\lambda^{'}_{T}$ coincides with $\lambda_T$ but is assigned the opposite orientation,
	and the meridian $\mu^{'}_{T}$ is distinct from $\mu_T$.
	As illustrated in Convention \ref{slope} (a),  
	we have $\Delta(\mu^{'}_{T}, \lambda^{'}_{T}) = 1$
	and $\Delta(\mu^{'}_{T}, \delta_T) = \Delta(\mu_T, \delta_T) = 1$.
	Since $\delta_T$ has slope $-2$ with respect to $(\mu_K, \lambda_K)$,  
	$\mu^{'}_{T}$ must have slope either $\infty$ or $-1$ with respect to $(\mu_K, \lambda_K)$.  
	Because $\mu_T' \ne \mu_T$, 
	$\mu^{'}_{T}$ can only have slope $\infty$,
	and thus $\mu_K = \mu^{'}_{T}$.
\end{remark}

\subsection{Thurston’s classification of surface homeomorphisms}\label{surface homeomorphism}

Let $\Sigma$ be a compact orientable surface with negative Euler characteristic,
and let $\varphi: \Sigma \to \Sigma$ be an orientation-preserving homeomorphism.
Recall that $\varphi$ is \emph{pseudo-Anosov} if,
there is a pair of measured singular foliations $(\mathcal{F}^{s}, \mu^{s}), (\mathcal{F}^{u}, \mu^{u})$ with
the same set of singularities in $\operatorname{Int}(\Sigma)$,
transverse except at these singularties and at $\partial \Sigma$,
such that
$\varphi$ takes $(\mathcal{F}^{s}, \mu^{s})$ to $(\mathcal{F}^{s}, \lambda^{-1} \mu^{s})$ and
takes $(\mathcal{F}^{u}, \mu^{u})$ to $(\mathcal{F}^{u}, \lambda \mu^{u})$ for 
some real number $\lambda > 1$.
In this case,
$\mathcal{F}^{s}$ (resp. $\mathcal{F}^{u}$) is called 
the \emph{stable foliation} (resp. \emph{unstable foliation}) of $\varphi$.

Here is Thurston's classification of surface homeomorphisms (\hyperref[T1]{[T1]}).
For any orientation-preserving homeomorphism $h: \Sigma \to \Sigma$,
$h$ is freely isotopic to a homeomorphism $h_0$ with one of the following properties:

(1)
$h^{n}_{0} = \text{id}$ for some $n \in \mathbb{N}$.
In this case,
$h_0$ is said to be \emph{periodic}.

(2)
There is a collection of pairwise disjoint essential simple closed curves
$C_1, \ldots, C_k$ in $\Sigma$
such that $h_0(\bigcup_{i=1}^{k}C_i) = \bigcup_{i=1}^{k}C_i$.
In this case,
$h_0$ is said to be \emph{reducible}. 

(3)
$h_0$ is pseudo-Anosov.

In \hyperref[T2]{[T2]},
Thurston proves that
the interiors of pseudo-Anosov mapping tori have
finite volume hyperbolic structures:

\begin{thm}[Thurston]
Let $M = \Sigma \times I / \stackrel{\varphi}{\sim}$ be the mapping torus of $\Sigma$ over $\varphi$.
Then 
$\operatorname{Int}(M)$ has a complete hyperbolic structure of finite volume if and only if
$\varphi$ is isotopic to a pseudo-Anosov homeomorphism.
\end{thm}

\subsection{Degeneracy loci of suspension flows}\label{degeneracy locus}

\begin{figure}\label{singularities}
	\centering
	\includegraphics[width=0.4\textwidth]{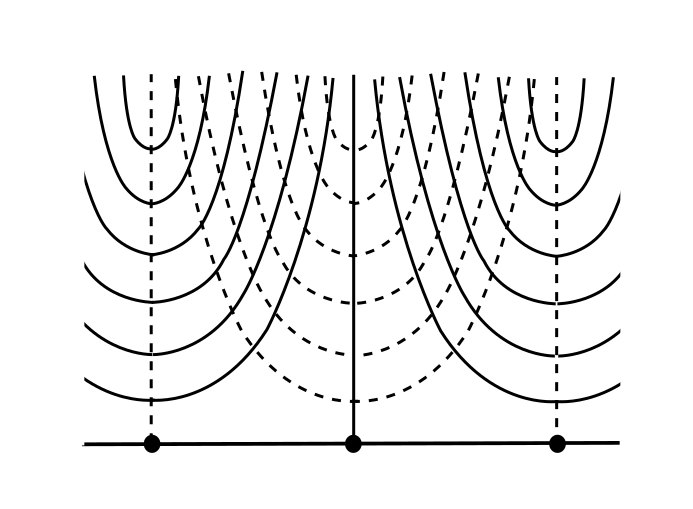}
	\caption{The dots are singularities of $\mathcal{F}^{s}$ or $\mathcal{F}^{u}$ on
		a boundary component of $\Sigma$.
		In $\operatorname{Int}(\Sigma)$,
		the solid lines are leaves of $\mathcal{F}^{s}$, and the dashed lines are leaves of $\mathcal{F}^{u}$.}
\end{figure}

Let $\Sigma$ be a compact orientable surface with negative Euler characteristic,
and let $\varphi: \Sigma \to \Sigma$ be an orientation-preserving pseudo-Anosov homeomorphism with 
the stable and unstable foliations $\mathcal{F}^{s}, \mathcal{F}^{u}$.
We assume further that $\Sigma$ has nonempty boundary.

Let $C$ be a component of $\partial \Sigma$.
Then each singularity of $\mathcal{F}^{s}$ or $\mathcal{F}^{u}$ on $C$ has
three separatrices,
where two of them are contained in $C$ and the other one is a prong toward $\operatorname{Int}(\Sigma)$,
see Figure \ref{singularities}.
Let $k$ be the number of singularities of $\mathcal{F}^{s}$ contained in $C$.
Then there are exactly $2k$ periodic points on $C$, 
consisting of
the $k$ singularities of $\mathcal{F}^{s}$ on $C$ (referred to as the \emph{attracting periodic points}) and
the $k$ singularities of $\mathcal{F}^{u}$ on $C$ (referred to as the \emph{repelling periodic points}).
We refer to \hyperref[CC]{[CC, Appendix]}, \hyperref[Ro2]{[Ro2, page 446]} for
some descriptions.

Let $M = \Sigma \times I / \stackrel{\varphi}{\sim}$ be the mapping torus of $\Sigma$ over $\varphi$,
and let $\Psi$ denote the suspension flow of $\varphi$ in $M$.
Then all closed orbits of $\Psi$ contained in the same boundary component of $M$ are
parallel essential simple closed curves.
Let $T$ be a component of $\partial M$.
Recall that the degeneracy slope of $T$, denoted $\delta_T$, 
is the slope of the closed orbits of $\Psi$ contained in $T$.
Let $\delta_T = \frac{u}{v}$ for which $u, v \in \mathbb{Z}$, $u > 0$, and $\gcd(u, v) = 1$,
where $u = 1, v = 0$ if $\delta_T = \infty$.
As explained above,
there are $2n$ closed orbits of $\Psi$ on $T$ for some $n \in \mathbb{N}_+$,
and $n$ of them contain $(t,0)$ for some $t \in \partial \Sigma$ which is a singular point of $\mathcal{F}^{s}$.
The \emph{degeneracy locus} of $\Psi$ on $T$ is
the union of these $n$ closed orbits,
which is denoted by $d(T)$ and identified with $(nu; nv)$. 
We refer to $n$ as the multiplicity of $d(T)$.
The degeneracy locus of $\Psi$ on $T$ is originally introduced 
as the degeneracy curve of $T$ in \hyperref[GO]{[GO, Section 5]}.
See also \hyperref[Ga4]{[Ga4, Section 8]} and \hyperref[Ro2]{[Ro2]}.

Now we assume further that
$\varphi$ takes each component of $\partial \Sigma$ to itself.
Let $C$ be a component of $\partial \Sigma$, 
and let $T$ denote the component $C \times I$ of $\partial M$.
The \emph{fractional Dehn twist coefficient} of $C$, 
denoted $f_C(\varphi)$, 
is defined to be
$\frac{1}{\delta_T}$
(where $f_C(\varphi) = 0$ if $\delta_T = \infty$),
see \hyperref[KazR1]{[KazR1, page 3]}.
$\varphi$ is called \emph{right-veering} (resp. \emph{left-veering}) if
all components of $\partial \Sigma$ have positive (resp. negative) fractional Dehn twist coefficients,
see \hyperref[HKM]{[HKM, Proposition 3.1]}.
We note that $f_C(\varphi) > 0$ (resp. $f_C(\varphi) < 0$) if and only if $\delta_T > 0$ (resp. $\delta_T < 0$).
Thus,
$\varphi$ is right-veering (resp. left-veering) if and only if
all components of $\partial \Sigma$ have positive (resp. negative) degeneracy slopes.

\subsection{Co-orientable pseudo-Anosov maps}\label{co-orientable}

Let $\Sigma$ be a compact orientable surface with negative Euler characteristic,
and let $\varphi: \Sigma \to \Sigma$ be an orientation-preserving pseudo-Anosov homeomorphism with 
the stable and unstable foliations $\mathcal{F}^{s}, \mathcal{F}^{u}$.

Recall that $\varphi$ is a co-orientable map if $\mathcal{F}^{s}$ is co-orientable,
and $\varphi$ is co-orientation-preserving (or co-orientation-reversing) if
$\varphi$ is co-orientable and preserves (or reverses) the co-orientation on $\mathcal{F}^{s}$.
We first explain that
$\mathcal{F}^{s}$ can be replaced with $\mathcal{F}^{u}$ above:

\begin{remark}\rm\label{independent}
If $\mathcal{F}^{s}$ is co-oriented,
	then the co-orientation on $\mathcal{F}^{s}$ defines 
	continuously varying orientations on the leaves of $\mathcal{F}^{u}$,
	and thus defines a co-orientation on $\mathcal{F}^{u}$.
	Therefore,
	$\mathcal{F}^{s}$ is co-orientable if and only if $\mathcal{F}^{u}$ is co-orientable,
	and the co-orientations on $\mathcal{F}^{s}, \mathcal{F}^{u}$ are either
	simultaneously preserved or simultaneously reversed by $\varphi$.
\end{remark}

We refer to each singularity of $\mathcal{F}^{s}$ contained in $\operatorname{Int}(\Sigma)$ as
an \emph{interior singularity} of $\mathcal{F}^{s}$.
If $\varphi$ is co-orientable,
then every interior singularity of $\mathcal{F}^{s}$ 
has an even number of prongs, 
and every component of $\partial \Sigma$ contains an even number of singularities of $\mathcal{F}^{s}$.

In the case where $\varphi$ is non-co-orientable,
$\Sigma$ has a double covering branched over 
the set of interior singularities of $\mathcal{F}^{s}$ with an odd number of prongs,
denoted $p: \widetilde{\Sigma} \to \Sigma$, 
such that $\mathcal{F}^{s}$ pulls back to 
a co-orientable singular foliation $\widetilde{\mathcal{F}^{s}}$ of $\widetilde{\Sigma}$.
Then $\mathcal{F}^{u}$ also pulls back to
a co-orientable singular foliation $\widetilde{\mathcal{F}^{u}}$ of $\widetilde{\Sigma}$,
and
the measures on $\mathcal{F}^{s}, \mathcal{F}^{u}$ pull back to
two measures on $\widetilde{\mathcal{F}^{s}}, \widetilde{\mathcal{F}^{u}}$.
We refer the reader to 
\hyperref[BB]{[BB, Section 2.4]},
\hyperref[FM]{[FM, 14.2.1]} for details.
$\widetilde{\Sigma}$ also has the following properties:

(1)
The branched covering $p: \widetilde{\Sigma} \to \Sigma$ has a deck group $\mathbb{Z} / 2\mathbb{Z}$,
consisting of all homeomorphisms 
$f: \widetilde{\Sigma} \to \widetilde{\Sigma}$ with $p = p \circ f$.
The generator of this deck group,
denoted $\theta$,
reverses the co-orientation on $\widetilde{\mathcal{F}^{s}}$.

(2)
$\varphi$ lifts to a homeomorphism
$\widetilde{\varphi}: \widetilde{\Sigma} \to \widetilde{\Sigma}$.
Since $\widetilde{\varphi}$ preserves $\widetilde{\mathcal{F}^{s}}, \widetilde{\mathcal{F}^{u}}$,
scaling their measures by the same factor as $\varphi$,
it follows that $\widetilde{\varphi}$ is a pseudo-Anosov map.

Note that 
$\theta \circ \widetilde{\varphi}: \widetilde{\Sigma} \to \widetilde{\Sigma}$ is also a lift of $\varphi$.
Similar to (2),
$\theta \circ \widetilde{\varphi}$ is also a pseudo-Anosov map.
Clearly,
$\widetilde{\varphi}, \theta \circ \widetilde{\varphi}$ are co-orientable maps.
As $\theta$ reverses the co-orientation on $\widetilde{\mathcal{F}^{s}}$,
either $\widetilde{\varphi}$ or $\theta \circ \widetilde{\varphi}$ is co-orientation-preserving, 
and the other is co-orientation-reversing.

We refer to \hyperref[T1]{[T1]}, \hyperref[M]{[M]} for 
more information on co-orientable pseudo-Anosov homeomorphisms, 
and refer to \hyperref[BB]{[BB, Lemma 4.3]},
\hyperref[DGT]{[DGT]} for criteria to determine whether 
a pseudo-Anosov map is co-orientation-preserving/reversing.

\subsection{Branched surfaces}\label{subsection 2.2}

\begin{figure}\label{standard spine picture}
	\centering
	\subfigure[]{
	\includegraphics[width=0.25\textwidth]{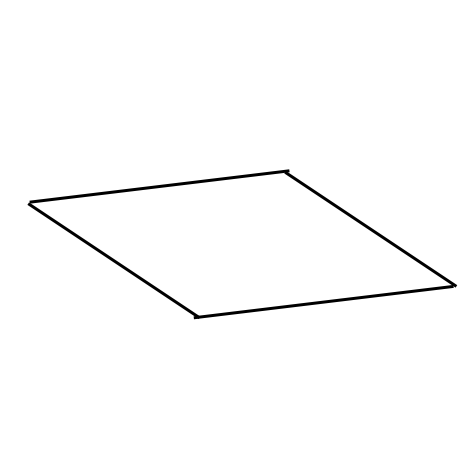}}
\subfigure[]{
	\includegraphics[width=0.25\textwidth]{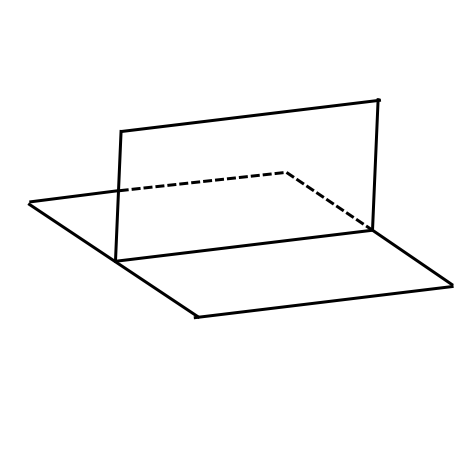}}
	\subfigure[]{
	\includegraphics[width=0.25\textwidth]{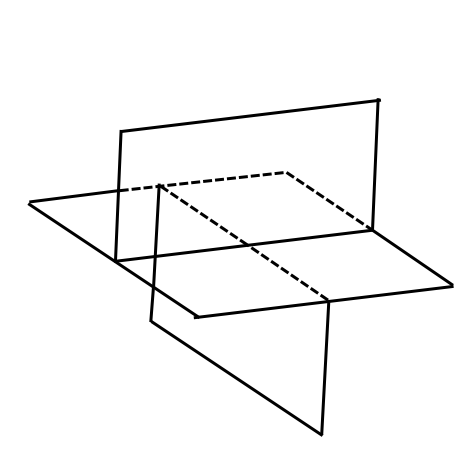}}
	\subfigure[]{
		\includegraphics[width=0.3\textwidth]{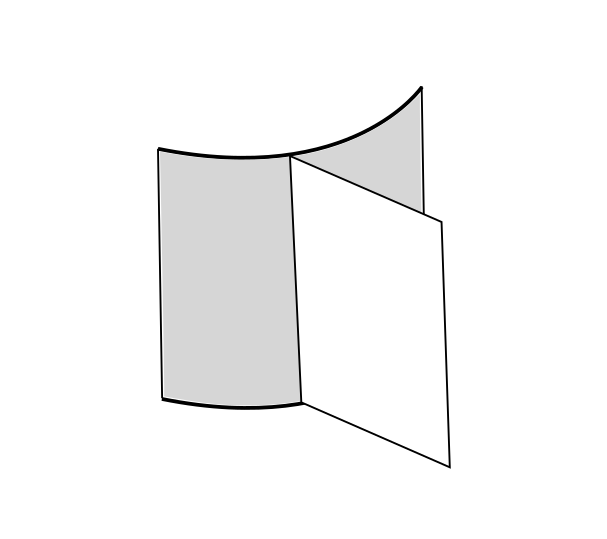}}
	\subfigure[]{
		\includegraphics[width=0.3\textwidth]{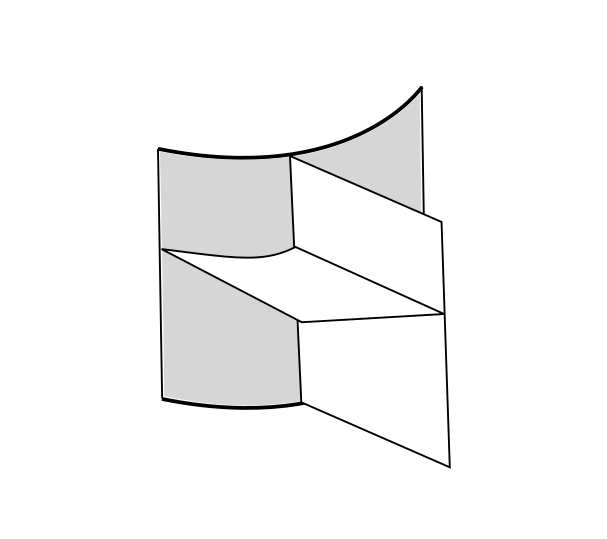}}
	\caption{Local models of standard spines, where the shaded regions in (d) and (e) are contained in $\partial M$.}
\end{figure}

Branched surfaces are an important tool to describe foliations and laminations.
In this subsection,
we review some background on branched surfaces.
We refer to \hyperref[FO]{[FO]}, \hyperref[O2]{[O2]} for some expositions on branched surfaces.

\begin{defn}\rm \label{standard spine}
	A \emph{standard spine} $X$ in a compact, orientable $3$-manifold $M$
	is an embedded $2$-complex such that 
	every point of $X - \partial M$ has a neighborhood in $X$ modeled on 
	one of Figures~\ref{standard spine picture} (a)$\sim$(c), 
	and every point of $X \cap \partial M$ has a neighborhood in $X$ modeled on 
	Figure~\ref{standard spine picture} (d) or~(e).
\end{defn}

\begin{defn}\rm \label{branched surface definition}
	A \emph{branched surface} $B$ in a compact orientable $3$-manifold $M$ is
	a standard spine with a well-defined tangent plane at every point.
For every point $x \in B$, 
$x$ has a neighborhood in $B$ modeled on one of Figures \ref{branched surface} (a)$\sim$(c) if $x \in B - \partial M$, 
and a neighborhood in $B$ modeled on Figure \ref{branched surface} (d) or (e) if $x \in B \cap \partial M$.
\end{defn}

Let $B$ be a branched surface in a compact orientable $3$-manifold $M$,
and let $$L(B) = \{t \in B \mid t \text{ has no Euclidean neighborhood in } B\}.$$ 
Call $L(B)$ the \emph{branch locus} of $B$,
call each component of $B \setminus \setminus L(B)$ a \emph{branch sector},
call each point in $L(B)$ without an $\mathbb{R}$-neighborhood in $L(B)$
a \emph{double point} of $L(B)$,
and call each component of
$L(B) \setminus \setminus \{\text{double points in } L(B)\}$
a \emph{segment} in $L(B)$.
$B$ is \emph{co-orientable} if there exists 
a continuous normal vector field on $B$,
or equivalently,
all branch sectors of $B$ have compatible co-orientations.

A \emph{fibered neighborhood} $N(B)$ of $B$ is
a regular neighborhood of $B$ locally modeled as Figure \ref{fibered neighborhood}.
We regard $N(B)$ as an interval bundle over $B$
and call its fibers \emph{interval fibers}.
$\partial N(B) \setminus \setminus \partial M$ can be decomposed into 
two (possibly disconnected) compact subsurfaces
$\partial_h N(B)$ and $\partial_v N(B)$,
such that $\partial_h N(B)$ is transverse to the interval fibers and
$\partial_v N(B)$ is tangent to the interval fibers.
We call $\partial_h N(B)$ the \emph{horizontal boundary} of $N(B)$ and
call $\partial_v N(B)$ the \emph{vertical boundary} of $N(B)$.
Let $\pi: N(B) \to B$ denote the canonical projection that
sends every interval fiber to a single point.
We call $\pi$ the \emph{collapsing map} for $N(B)$.
Throughout this paper,
the notation $\operatorname{Int}(N(B))$ is always assumed to mean $N(B) - (\partial_h N(B) \cup \partial_v N(B))$,
so that the space $M - \operatorname{Int}(N(B))$ is identified with the closure of $M - N(B)$.

\begin{figure}\label{branched surface}
	\centering
	\subfigure[]{
		\includegraphics[width=0.25\textwidth]{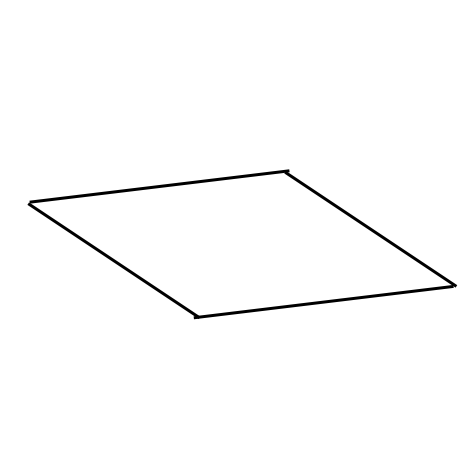}}
	\subfigure[]{
		\includegraphics[width=0.25\textwidth]{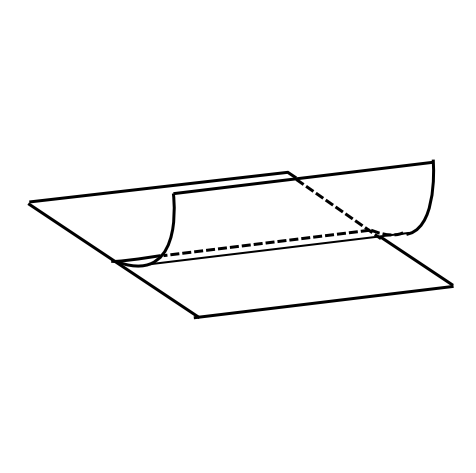}}
	\subfigure[]{
		\includegraphics[width=0.25\textwidth]{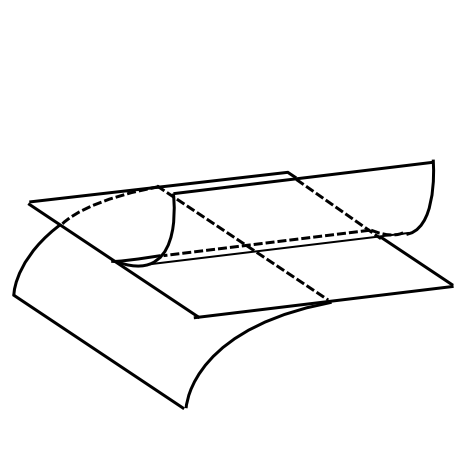}}
	\subfigure[]{
		\includegraphics[width=0.3\textwidth]{3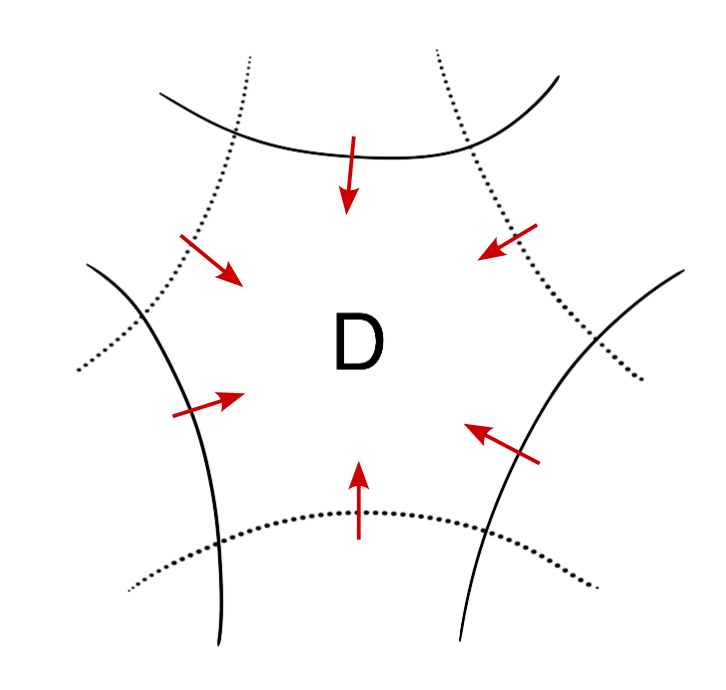}}
	\subfigure[]{
		\includegraphics[width=0.3\textwidth]{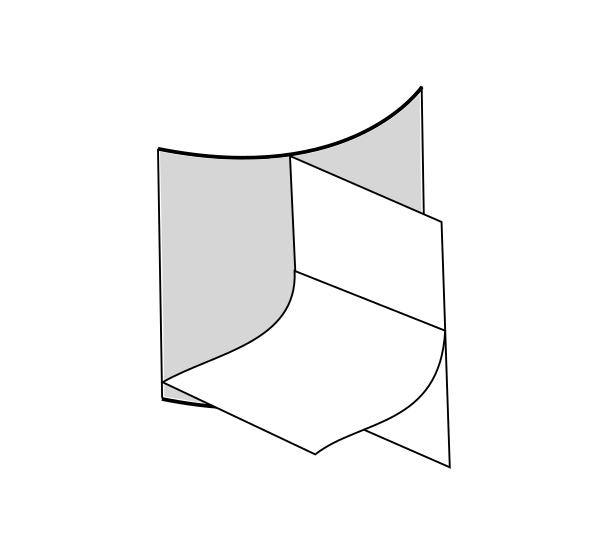}}
	\caption{Local models of branched surfaces, where the shaded regions in (d) and (e) are contained in $\partial M$.}
\end{figure}

We note that $\pi$ projects $\partial_v N(B)$ onto $L(B)$.
For each segment $s$ in $L(B)$, 
there exists a unique component $V$ of $\partial_v N(B)$ such that $s \subseteq \pi(V)$. 
We assign $s$ a normal vector within $B$, induced by the inward-pointing normal vector on $V$ 
(i.e. the normal vector on $V$ pointing into $N(B)$), 
which is referred to as the \emph{cusp direction} at $s$.

\begin{defn}\rm
A lamination $\mathcal{L}$ is \emph{carried} by $B$
if we can choose a fibered neighborhood $N(B)$ of $B$
such that 
$\mathcal{L} \subseteq N(B)$ and
every leaf of $\mathcal{L}$ is transverse to the interval fibers of $N(B)$.
Furthermore, $\mathcal{L}$ is \emph{fully carried} by $B$ if
$\mathcal{L}$ is carried by $B$ and intersects
all interval fibers of $N(B)$.
\end{defn}

In \hyperref[GO]{[GO]},
Gabai and Oertel introduce essential branched surfaces to describe essential laminations.

\begin{defn}[Essential branched surface]\rm \label{essential}
	A branched surface $B$ in a compact orientable $3$-manifold $M$ is \emph{essential} if all of 
	the following conditions hold:
	
	(a)
	There is no disk of contact,
	where a disk of contact is an embedded disk $D \subseteq N(B)$ transverse to 
	the interval fibers of $N(B)$ with $\partial D \subseteq \operatorname{Int}(\partial_v N(B))$.
	In addition, there is no half disk of contact,
	where a half disk of contact is an embedded disk $D \subseteq N(B)$ transverse to 
	the interval fibers such that
	there is a connected segment $\alpha \subseteq \partial D$ with
	$\alpha \subseteq \partial M \cap \partial N(B)$ and
	$\partial D - \operatorname{Int}(\alpha) \subseteq \operatorname{Int}(\partial_v N(B)) - \partial M$.
	
	(b)
	$\partial_h N(B)$ is incompressible and $\partial$-incompressible in $M -\operatorname{Int}(N(B))$,
	no component of $\partial_h N(B)$ is a sphere or a disk properly embedded in $M$,
	and no monogon exists in $M - \operatorname{Int}(N(B))$.
	Here a \emph{monogon} in $M - \operatorname{Int}(N(B))$ means
	an embedded disk $D \subseteq M - \operatorname{Int}(N(B))$ such that 
	$D \cap N(B) = \partial D$, 
	and $\partial D \cap \partial_v N(B)$ lies in a single interval fiber of $N(B)$.
	
	(c)
	$M - \operatorname{Int}(N(B))$ is irreducible,
	and $\partial M - \operatorname{Int}(N(B))$ is incompressible in $ M - \operatorname{Int}(N(B))$.
	
	(d)
	$B$ contains no Reeb branched surface 
	(see \hyperref[GO]{[GO]} for the definition of Reeb branched surface).
	
	(e)
	$B$ fully carries a lamination.
\end{defn}

\begin{figure}\label{fibered neighborhood}
	\includegraphics[width=0.7\textwidth]{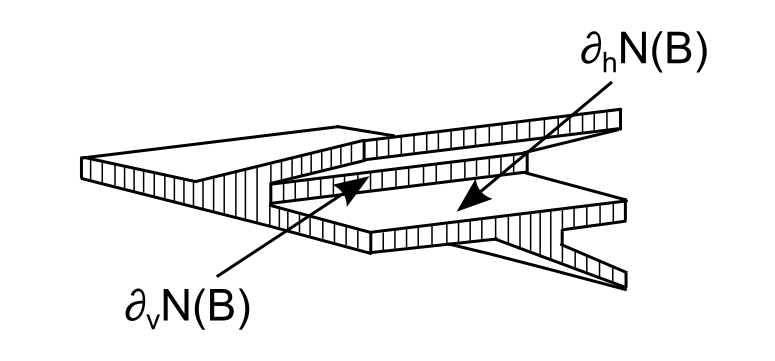}
	\caption{The fibered neighborhood $N(B)$.}
\end{figure}

Gabai and Oertel (\hyperref[GO]{[GO]}) prove that 

\begin{thm}[Gabai-Oertel]
	(a)
	Any essential lamination in a compact orientable $3$-manifold is
	fully carried by an essential branched surface.
	
	(b)
	Any lamination in a compact orientable $3$-manifold fully carried by
	an essential branched surface is an essential lamination.
\end{thm}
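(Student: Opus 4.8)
The plan is to follow the original argument of Gabai and Oertel. For part (a), start with an essential lamination $\mathcal{L} \subseteq M$ and put it in a local product model: by compactness of $M$, choose finitely many closed flow boxes $R_j \cong D^2 \times [0,1]$ covering $M$ so that $\mathcal{L} \cap R_j = D^2 \times K_j$ for a closed set $K_j \subseteq [0,1]$, with the $[0,1]$-directions of overlapping boxes compatibly oriented. Then form a branched surface $B$ by collapsing, in each box, every "gap" interval $\{x\} \times (a,b)$ (with $(a,b)$ a complementary interval of $K_j$) to a point, carried out consistently on overlaps; the union of the boxes collapses to a fibered neighborhood $N(B)$, and by construction $\mathcal{L} \subseteq N(B)$ is transverse to and meets every interval fiber, so $\mathcal{L}$ is fully carried by $B$. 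It then remains to verify that $B$ is essential. I would establish each clause of Definition \ref{essential} by contradiction: a disk or half-disk of contact, a monogon complementary region, a sphere or disk component of $\partial_h N(B)$, or a Reeb branched surface inside $B$ would, after pushing the relevant surface back into the $R_j$'s and applying an innermost-disk and outermost-arc analysis, produce a sphere leaf, a compressing disk for a leaf, an end-compression of $\mathcal{L}$, a reducing sphere in $M - \mathcal{L}$, or a Reeb-type complementary region — each excluded by essentiality of $\mathcal{L}$. Clause (e) is immediate since $B$ fully carries $\mathcal{L}$.

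For part (b), let $B$ be essential and $\mathcal{L}$ a lamination fully carried by $B$; I would check the defining properties of an essential lamination by transporting the branched-surface conditions through the collapsing map $\pi \colon N(B) \to B$. Since $B$ fully carries $\mathcal{L}$, we have $\mathcal{L} \neq \emptyset$. If a leaf were compressible or a sphere, an innermost compressing disk (or the sphere itself) could be isotoped into $N(B)$ transverse to the interval fibers; it would then be a disk of contact, or would cap off against $\partial_h N(B)$ contradicting its incompressibility, or would produce a monogon — all forbidden by clauses (a), (b) of Definition \ref{essential}; the same argument, together with $\partial$-incompressibility of $\partial_h N(B)$ and the no-monogon condition, yields $\pi_1$-injectivity and end-incompressibility of leaves. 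The complementary regions of $\mathcal{L}$ are obtained by gluing $I$-bundles (over the complement of $\mathcal{L}$ in $N(B)$) to $M - Int(N(B))$, so a reducing sphere for $M - \mathcal{L}$ could be isotoped into $M - Int(N(B))$, contradicting irreducibility in clause (c); and a Reeb complementary region would force $B$ to contain a Reeb branched surface, contradicting clause (d). Hence $\mathcal{L}$ is essential.

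The technical heart, and the main obstacle, is the essentiality verification in part (a): converting the single hypothesis that $\mathcal{L}$ is essential into each of the separate, a priori unrelated conditions on $B$ and $N(B)$. Ruling out disks of contact and proving incompressibility and $\partial$-incompressibility of $\partial_h N(B)$ require careful general-position and innermost-disk arguments on surfaces made transverse to the interval fibration, and the presence of $\partial M$ (half-disks of contact, vertical boundary along $\partial M$, $\partial$-incompressibility) is where most of the bookkeeping lies. By contrast, part (b) is comparatively routine once the dictionary between $\mathcal{L}$, its carried position inside $N(B)$, and the complementary regions has been set up.
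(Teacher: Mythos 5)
There is no proof of this statement in the paper to compare against: it is quoted as background, with the proof deferred entirely to the cited work of Gabai and Oertel [GO]. So your proposal can only be judged on its own terms, and on those terms it is an outline of the standard [GO] strategy rather than a proof. The construction in part (a) (flow boxes, collapsing the complementary interval gaps to obtain $N(B)$ and $B$) is the right mechanism, but every substantive verification is deferred: each clause of the definition of an essential branched surface is ``established'' only by naming the kind of argument (``innermost-disk and outermost-arc analysis'') that would be applied, without carrying out any of it, and this is precisely where the content of the theorem lies. One concrete slip: you require the $[0,1]$-directions of overlapping flow boxes to be ``compatibly oriented.'' This is neither needed for the collapsing construction (the interval fibers only have to match, not to be coherently oriented) nor always achievable, since an essential lamination need not be transversely orientable; as stated, your construction would exclude exactly those laminations.

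In part (b) the situation is similar but more serious: you describe isotoping a compressing disk or sphere into $N(B)$ and converting it into a disk of contact, a compression of $\partial_h N(B)$, or a monogon, which is indeed the shape of the argument, but the reduction (making the disk transverse to the fibers away from finitely many controlled pieces, analyzing its intersection with $\partial_v N(B)$ and $\partial_h N(B)$, and ruling out the degenerate configurations) is the hard part of [GO], not a routine transport through the collapsing map; in particular $\pi_1$-injectivity of the leaves and end-incompressibility do not follow by a one-line appeal to clauses (a)--(b) of the definition. Calling part (b) ``comparatively routine'' inverts the actual difficulty. So the proposal is an accurate roadmap of the original proof, but it contains a genuine gap in the sense that none of the case analyses that constitute the proof are supplied, and at least one stated hypothesis (coherent orientation of the box fibrations) is wrong as a requirement.
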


Now we describe the laminar branched surface introduced by Li in \hyperref[Li1]{[Li1]}, \hyperref[Li2]{[Li2]}.

\begin{defn}\rm
	Let $B$ be a branched surface.
	
	(a)
	A \emph{sink disk} of $B$ is 
	a disk component $D$ of $B \setminus \setminus L(B)$ such that
	$D \cap \partial M = \emptyset$ and 
	the cusp directions at all segments in $\partial D$ point into $D$.
	
	(b)
	A \emph{half sink disk} of $B$ is a disk component $D$ of $B \setminus \setminus L(B)$ such that
	$D \cap \partial M \ne \emptyset$ and 
	the cusp directions at all segments in $\partial D \cap L(B)$ point into $D$.
\end{defn}

For a branched surface $B$ in a $3$-manifold $M$,
A \emph{trivial bubble} in $M - \operatorname{Int}(N(B))$ is 
a $3$-ball component $Q$ of $M - \operatorname{Int}(N(B))$ such that
$\partial Q \cap \partial_h N(B)$ has $2$ components and each of them is a disk, 
and moreover,
$\pi \mid_{\partial Q \cap N_h(B)}$ is injective.
We also call $\pi(\partial Q)$ a \emph{trivial bubble} of $B$.

\begin{defn}[Laminar branched surface]\rm\label{laminar}
	Let $B$ be a branched surface in a compact orientable $3$-manifold $M$.
	$B$ is a \emph{laminar branched surface} if $B$ satisfies Conditions
	(b)$\sim$(d) of Definition \ref{essential},
	$B$ contains no trivial bubble,
	and $B$ contains no sink disk or half sink disk.
\end{defn}

In \hyperref[Li1]{[Li1]}, \hyperref[Li2]{[Li2]},
Li proves:

\begin{thm}[Li]
	Let $M$ be a compact orientable $3$-manifold.
	
	(a)
	Any laminar branched surface in $M$ fully carries an essential lamination.
	
	(b)
	Any essential lamination in $M$ which is not a lamination by $2$-planes is fully carried by a laminar branched surface.
\end{thm}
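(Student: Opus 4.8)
The plan is to prove Theorem~\ref{main} by constructing, for each admissible choice of a ``spiraling parameter'' $\alpha$, a co-orientable \emph{laminar} branched surface $B(\alpha)\subseteq M$ in the sense of Definition~\ref{laminar}, reading off its boundary train tracks on the tori $T_1,\dots,T_k$, and then capping $B(\alpha)$ off inside the filling solid tori to produce a co-orientable taut foliation on $M(s_1,\dots,s_k)$. I would first fix a train track $\sigma$ on $\Sigma$ fully carrying the stable foliation $\mathcal{F}^{s}$, arrange by isotopy that $\varphi$ carries $\sigma$ into itself, and suspend to get a first branched surface $B_0\subseteq M$ fully carrying the suspension lamination of $\mathcal{F}^{s}$; since $\varphi$ is co-orientable, $B_0$ is co-orientable, and since $\varphi$ is co-orientation-\emph{reversing}, the cusp directions along the ``vertical'' part of the branch locus are flipped relative to the co-orientation-preserving situation of Theorem~\ref{co-orientation-preserving}. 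The boundary train track of $B_0$ on each $T_i$ sees only the degeneracy slope $p_i/q_i$, so I would then modify $B_0$ in a collar of $\partial M$ by inserting, on each $T_i$, spiral/annular sectors governed by an integer $\alpha_i$ (roughly, how far the stable prongs are combed around $C_i$ before closing up under $\varphi^{c_i}$), obtaining $B(\alpha)$. The combinatorics of this modification is dictated by Remark~\ref{remark}(a): since $\varphi^{c_i}$ sends the $j$-th singular point $v_j$ of $\mathcal{F}^{s}$ on $C_i$ to $v_{j+q_i}$ (mod $p_i$), the ``period'' of a prong around $T_i$ is controlled by $p_i,q_i,c_i$, which is exactly what produces the endpoints $p_i/(q_i\pm c_i)$ of $J_i$; the hypotheses $2\mid p_i$ and $q_i\equiv c_i\pmod 2$ are what guarantee that the co-orientations of consecutive spiral sectors match, so that $B(\alpha)$ remains co-orientable.

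Next I would verify that $B(\alpha)$ satisfies Definition~\ref{laminar}. Conditions (b)--(c) of Definition~\ref{essential} hold because $M\setminus\setminus N(B(\alpha))$ is, by construction, assembled from $I$-bundles over subsurfaces of the fibers $\Sigma\times\{t\}$ together with the controlled spiral regions near $\partial M$, so it is irreducible, its horizontal boundary is incompressible and $\partial$-incompressible, and there are no monogons; condition (d) (no Reeb branched surface) follows because the carried lamination refines the stable lamination of the pseudo-Anosov flow; and the absence of trivial bubbles is immediate from the construction. The crucial point is the absence of \emph{sink disks and half sink disks}: interior sectors inherit an outward cusp from $\sigma$ (a train track carrying a foliation has no sink branch sector), while the half-disk sectors meeting each $T_i$ are designed so that not all cusp directions along $\partial D\cap L(B)$ point into $D$ --- and it is precisely the co-orientation-reversing hypothesis that forces the restriction from ``all non-degeneracy slopes'' (the answer of Theorem~\ref{co-orientation-preserving}) down to the single interval $J_i$, with all six cases of the formula for $J_i$ arising from the sign of $q_i$ and its size relative to $\pm c_i$. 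By Li's theorem, $B(\alpha)$ then fully carries an essential lamination, which is co-orientable since $B(\alpha)$ is.

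I would then analyze the boundary train track $\tau_i(\alpha)=B(\alpha)\cap T_i$ and determine which slopes on $T_i$ are realized by simple closed curves carried by $\tau_i(\alpha)$; letting $\alpha_i$ vary, the union of these slopes should be exactly $J_i\cap(\mathbb{Q}\cup\{\infty\})$. Given a multislope $(s_1,\dots,s_k)\in(J_1\times\dots\times J_k)\cap(\mathbb{Q}\cup\{\infty\})^k$, I pick $\alpha$ so that each $s_i$ is realized by a curve $\gamma_i$ carried by $\tau_i(\alpha)$, and attach to $B(\alpha)$, inside each filling solid torus $V_i$ (glued along $s_i$), a branched-surface-with-boundary bounded by $\gamma_i$ and itself $I$-bundle-like; the union $\widehat B$ is a branched surface in $M(s_1,\dots,s_k)$. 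Since $\gamma_i$ is carried by $\tau_i(\alpha)$ with no monogon in its complement, $\widehat B$ acquires no new sink or half sink disks and remains laminar, hence fully carries a co-orientable essential lamination $\widehat{\mathcal L}$. Finally, because every complementary region of $\widehat B$ is an $I$-bundle by construction, $\widehat{\mathcal L}$ completes to a co-orientable foliation $\mathcal F$ of $M(s_1,\dots,s_k)$ with no Reeb components, which is therefore taut; and each core curve of $V_i$ is transverse to the interval fibers of $N(\widehat B)$, hence transverse to the leaves of $\mathcal F$, giving the last assertion of the theorem.

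The heart of the argument --- and the step I expect to be the main obstacle --- is the simultaneous control required of $B(\alpha)$: it must be sink-disk-free near every $T_i$ (so that Li's theorem applies) \emph{and} its boundary train track must realize precisely the slopes of $J_i$, with the ``degeneracy side'' genuinely obstructed. Pinning down the endpoints $p_i/(q_i\pm c_i)$ exactly, handling all six cases of the $J_i$ formula uniformly, checking that the spiral sectors never acquire an all-inward cusp pattern in the co-orientation-reversing setting, and confirming that the solid-torus caps do not destroy laminarity together constitute essentially all of the work; by comparison, the passage from the carried lamination to an honest co-orientable taut foliation, and the transversality of the filling cores, are routine once the complementary regions are known to be product-like.
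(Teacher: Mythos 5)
Your proposal does not address the statement at hand. The statement is Li's theorem on laminar branched surfaces: (a) every laminar branched surface in a compact orientable $3$-manifold fully carries an essential lamination, and (b) every essential lamination that is not a lamination by planes is fully carried by a laminar branched surface. In the paper this is a quoted background result, attributed to \hyperref[Li1]{[Li1]} and \hyperref[Li2]{[Li2]}, and it is used as a black box in the proof of the main theorem. What you have written is instead an outline of the proof of Theorem~\ref{main}: the construction of the branched surface $B(\alpha)$, the verification that it is laminar, the analysis of the boundary train tracks, and the Dehn-filling step. At the decisive moment your argument says ``By Li's theorem, $B(\alpha)$ then fully carries an essential lamination,'' i.e.\ you invoke precisely the statement you were asked to prove, so as a proof of that statement the proposal is circular and contains none of the required content.

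A genuine proof of part (a) is Li's splitting argument: starting from a laminar branched surface one repeatedly splits $N(B)$ along the branch locus, using the absence of sink disks, half sink disks and trivial bubbles to guarantee that the splitting process can be continued indefinitely (or organized into an infinite sequence of splittings whose inverse limit is a lamination transverse to the interval fibers meeting every fiber), and then one checks that conditions (b)--(d) of Definition~\ref{essential} force the resulting fully carried lamination to be essential. Part (b) runs in the other direction: given an essential lamination with no plane leaves (more precisely, not a lamination by $2$-planes), one takes an essential branched surface fully carrying it and splits it, using leaves of the lamination to guide the splitting, until all sink disks and trivial bubbles are eliminated. None of these ideas -- the splitting moves, why sink-disk-freeness prevents the process from getting stuck, or how essentiality of the carried lamination is verified -- appear in your write-up, so there is a complete gap between what was asked and what you proved. (As a separate matter, your sketch of Theorem~\ref{main} is broadly consonant with the paper's strategy, though the paper builds $B(\alpha)$ from the fiber $\Sigma\times\{0\}$ plus product disks $\alpha\times I$ over arcs transverse to $\mathcal{F}^{s}$, rather than by spiraling a suspended train track near $\partial M$; but that is not the statement under review.)
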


The concept of fibered neighborhoods for branched surfaces naturally extends to train tracks.
Accordingly, it's also natural to describe curves as being (fully) carried by train tracks in a similar sense.
Let $\tau$ be a train track on a surface $S$.
A fibered neighborhood $N(\tau)$ of $\tau$ is a regular neighborhood of $\tau$,
foliated by closed intervals (still referred to as the \emph{interval fibers} of $N(\tau)$),
such that there is a well-defined quotient map $q: N(\tau) \to \tau$
that maps each interval fiber to a single point of $\tau$.
A one-dimensional lamination $\Lambda$ on $S$ is said to be carried by $\tau$ if
there exists a fibered neighborhood of $\tau$ such that $\Lambda$ is transverse to its interval fibers.
In addition, $\Lambda$ is \emph{fully carried} by $\tau$ if $\Lambda$ is carried by $\tau$ and 
intersects every interval fiber of $N(\tau)$.

Now, consider a compact $3$-manifold $M$ such that $\partial M$ contains a torus component $T$,
and let $\tau$ be a train track on $T$.
Following \hyperref[Li2]{[Li2, page 4]},
a slope $s$ on $T$ is realized by $\tau$ if $\tau$ fully carries a union of simple closed curves with slope $s$.

The following theorem demonstrates that 
the laminar branched surface is a useful tool
for constructing laminations and foliations in the Dehn fillings,
by realizing the corresponding slopes from the boundary train tracks (\hyperref[Li2]{[Li2, Theorem 2.2]}):

\begin{thm}[Li]\label{boundary}
	Let $M$ be a compact, orientable, irreducible $3$-manifold whose boundary is a union of incompressible tori,
	and let $T_1, \ldots, T_n$ denote the components of $\partial M$.
	Let $B$ be a laminar branched surface in $M$ such that $\partial M \setminus \setminus B$ is a union of bigons.
	Suppose that $$\textbf{s} = (s_1,\ldots,s_n) \in (\mathbb{Q} \cup \{\infty\})^{n}$$
	is a multislope in $\partial M$ such that
	$s_i$ is a slope on $T_i$ that can be realized by the boundary train track $B \cap T_i$ and
	$B$ does not carry a torus that bounds a solid torus in $M(\textbf{s})$,
	then $B$ fully carries an essential lamination $\mathcal{L}_{\textbf{s}}$ 
	that meets each $T_i$ transversely in a collection of simple closed curves of slope $s_i$.
	Moreover,
	$\mathcal{L}_{\textbf{s}}$ can be extended to an essential lamination in 
	the Dehn filling of $M$ along $\partial M$ with the multislope $\textbf{s}$.
\end{thm}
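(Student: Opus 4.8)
\emph{Proof proposal.} The plan is to produce $\mathcal{L}_{\textbf{s}}$ in three stages: (i) build a lamination fully carried by $B$ whose intersection with each $T_i$ has slope $s_i$; (ii) use that $B$ is laminar to conclude this lamination is essential; and (iii) cap it off inside the filling solid tori and check that essentiality survives the Dehn filling. For stage (i) I would work one boundary torus at a time. Since $s_i$ is realized by the boundary train track $\tau_i := B \cap T_i$, there is an essential simple closed curve of slope $s_i$ carried by $\tau_i$; after splitting $\tau_i$, and correspondingly $B$, one may assume that a finite family of parallel copies of this curve traverses every branch of the resulting sub--train--track with positive weight. Thickening this family gives a lamination $\ell_i$ on $T_i$ of slope $s_i$ carried by $\tau_i$, and the hypothesis that $\partial M \setminus \setminus B$ is a union of bigons forces the complementary regions of $\ell_i$ in $T_i$ to be bigons as well. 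Taking the product $\ell_i \times [0,\varepsilon)$ in a collar of $T_i$ yields a partial lamination near $\partial M$ carried by $B$, and I would then push it inward through $N(B)$ by Li's construction for laminar branched surfaces: repeatedly split $B$ along branch sectors so as to extend the carried lamination while keeping it transverse to the interval fibers. Conditions (b)--(d) of Definition \ref{essential}, together with the absence of sink disks, half sink disks, and trivial bubbles (Definition \ref{laminar}), are exactly what guarantee that this process does not stall, that it terminates (or limits) to a lamination $\mathcal{L}_{\textbf{s}}$ fully carried by $B$, and that it never creates a complementary monogon or a disk or half disk of contact along the boundary.

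For stage (ii) one invokes the Gabai--Oertel and Li criterion: a lamination fully carried by a laminar branched surface is essential. Incompressibility and $\partial$-incompressibility of $\partial_h N(B)$, irreducibility of $M - Int(N(B))$, incompressibility of $\partial M - Int(N(B))$, the absence of monogons, of sphere or disk components of $\partial_h N(B)$, of a Reeb branched surface, and of disks and half disks of contact all transfer from $B$ to $\mathcal{L}_{\textbf{s}}$. By construction $\mathcal{L}_{\textbf{s}}$ meets each $T_i$ transversely in simple closed curves of slope $s_i$.

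For stage (iii), let $V_i$ be the solid torus attached to $T_i$ in $M(\textbf{s})$. Each boundary leaf of $\mathcal{L}_{\textbf{s}}$ on $T_i$ has slope $s_i$, hence bounds a meridian disk of $V_i$; spinning these meridian disks radially produces a lamination of a collar of $\partial V_i$ matching $\mathcal{L}_{\textbf{s}}$ near $T_i$, and gluing it on extends $\mathcal{L}_{\textbf{s}}$ to a lamination $\widehat{\mathcal{L}}$ of $M(\textbf{s})$. The complementary regions of $\widehat{\mathcal{L}}$ are obtained from those of $\mathcal{L}_{\textbf{s}}$ by attaching the leftover pieces of the $V_i$, so irreducibility, incompressibility of leaves, and the absence of monogons persist, and no sphere or disk leaf can appear since none was present in $M$. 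The only remaining way for $\widehat{\mathcal{L}}$ to fail to be essential is a torus leaf bounding a solid torus in $M(\textbf{s})$; such a torus would be carried by $B$, contradicting the hypothesis. Hence $\widehat{\mathcal{L}}$ is the desired essential lamination in $M(\textbf{s})$.

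The main obstacle is the inward splitting in stage (i): one must show that the saturation process yields an honest lamination that is \emph{fully} carried by $B$, and that no monogon, disk of contact, or half disk of contact is ever created --- especially along $\partial M$, where the half--sink--disk condition and the bigon condition on $\partial M \setminus \setminus B$ do the essential work. This is the technical core of Li's argument; the delicate part is the boundary bookkeeping that guarantees the realized slopes are exactly $s_i$, rather than slopes into which the splitting might otherwise force the lamination.
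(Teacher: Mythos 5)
First, note that the paper does not prove Theorem \ref{boundary} at all: it is quoted as Li's result \hyperref[Li2]{[Li2, Theorem 2.2]}, with the remark that the extension from connected boundary to several boundary tori is observed in \hyperref[KalR]{[KalR, Subsection 2.4]}. So there is no in-paper argument to match yours against; what your sketch must be measured against is Li's actual proof.

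Measured that way, your proposal has a genuine gap at its center. Stage (i) --- producing a lamination fully carried by $B$ whose boundary has the prescribed slopes $s_i$ --- is exactly the content of the theorem, and you dispose of it by appealing to ``Li's construction for laminar branched surfaces'' and asserting that conditions (b)--(d) of Definition \ref{essential} together with sink-disk-freeness ``guarantee that this process does not stall.'' That is not an argument; the inward saturation with controlled boundary slopes is the whole difficulty, and nothing in your sketch explains why the splitting can be done while keeping every interval fiber hit and keeping the boundary slopes equal to $s_i$ rather than drifting under further splitting (a worry you yourself flag at the end). Li's route is in fact different and avoids this: he splits $B$ in a collar of $\partial M$ so that each boundary train track becomes a union of parallel circles of slope $s_i$ (this uses the realizability of $s_i$ and the bigon condition on $\partial M \setminus\setminus B$), then caps off with meridian disks of the filling solid tori to obtain a branched surface $\widehat{B}$ in $M(\textbf{s})$, verifies that $\widehat{B}$ is still laminar --- this is where the bigon condition and the hypothesis that $B$ carries no torus bounding a solid torus in $M(\textbf{s})$ are actually used --- and then applies the closed-manifold theorem of \hyperref[Li1]{[Li1]} to get an essential lamination fully carried by $\widehat{B}$, which is finally intersected with $M$ to give $\mathcal{L}_{\textbf{s}}$. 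Two further points in your sketch would also need repair: in stage (ii) you invoke ``a lamination fully carried by a laminar branched surface is essential,'' but the Gabai--Oertel criterion requires an \emph{essential} branched surface (in particular no disk of contact, condition (a)), which is not part of Definition \ref{laminar} and would have to be checked; and in stage (iii) essentiality does not simply ``persist'' under capping, since gluing meridian disks changes the leaves and their incompressibility in $M(\textbf{s})$ must be argued, not inherited --- in Li's approach this is subsumed by working with $\widehat{B}$ in the filled manifold rather than capping the lamination by hand.
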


\begin{remark}\rm
	In \hyperref[Li2]{[Li2, Theorem 2.2]},
	Li only states this theorem in the case that $M$ has connected boundary.
	As noted in \hyperref[KalR]{[KalR, Subsection 2.4]},
	the argument holds for 
	the case that $M$ has multiple boundary components.
\end{remark}

\section{Proof of the main theorem}\label{section 3}

We prove Theorem \ref{main} in this section.

Let $\Sigma$ be a compact orientable surface with negative Euler characteristic and nonempty boundary,
and
let be $\varphi: \Sigma \to \Sigma$ be an orientation-preserving pseudo-Anosov homeomorphism.
Let $M = \Sigma \times I / \stackrel{\varphi}{\sim}$ be the mapping torus of $\Sigma$ over $\varphi$.
We fix an orientation on $M$,
and we adopt Conventions \ref{slope}, \ref{orientation} for $M$.
We assume 

\begin{assume}\rm
	$\varphi$ is co-orientable and co-orientation-reversing.
\end{assume}

Let $\mathcal{F}^{s}, \mathcal{F}^{u}$ denote the stable and unstable foliations of $\varphi$.
Then $\mathcal{F}^{s}, \mathcal{F}^{u}$ are co-orientable.
We fix a co-orientation on $\mathcal{F}^{s}$.

\subsection{Construction of the branched surface}\label{subsection 3.1}

In this subsection,
we construct a branched surface in $M$.
We first choose a union of oriented properly embedded arcs $\alpha \subseteq \Sigma$ and
then construct a branched surface $B(\alpha)$ by
adding a union of product disks $\alpha \times I$ to the fibered surface $\Sigma \times \{0\}$.

Let $C$ be a boundary component of $\Sigma$.
Let $p$ denote the number of singularities of $\mathcal{F}^{s}$ contained in $C$,
and let $v_1, v_2, \ldots, v_p \in C$ denote these $p$ singularities
(consecutive along the positive orientation on $C$).
Note that $2 \mid p$ since $\mathcal{F}^{s}$ is co-orientable.
Cutting open $C$ along $\{v_1, \ldots, v_p\}$,
the resulting space $C \setminus \setminus \{v_1, \ldots, v_p\}$ is a collection of $p$ closed segments,
where we denote by $[v_i, v_{i+1}]$ the segment with endpoints $v_i, v_{i+1}$ and
call it a \emph{stable segment} of $C$,
for each $i \in \{1,\ldots,p\}$ (by convention, $v_{p+1} = v_1$).
For each stable segment $[v_i, v_{i+1}]$,
we choose a point $w_i$ in its interior and
choose a transversal $\tau_i: I \to \Sigma$ of $\mathcal{F}^{s}$ that starts at $w_i$.
Then we call $[v_i, v_{i+1}]$ a \emph{negative} (resp. \emph{positive}) stable segment if
$\tau_i$ is a positively oriented (resp. negatively oriented) transversal of $\mathcal{F}^{s}$.
Note that a stable segment being positive (resp. negative) implies that
its two adjacent stable segments are negative (resp. positive).

\begin{figure}\label{smoothing}
	\centering
		\includegraphics[width=0.3\textwidth]{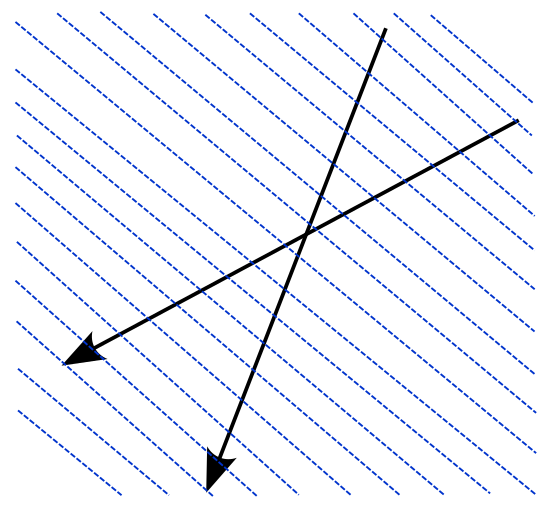}
		\includegraphics[width=0.3\textwidth]{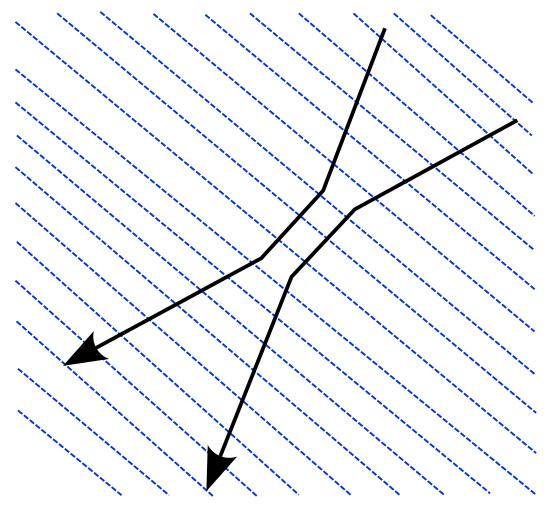}
	\caption{At a double intersection point of a union of paths positively transverse to $\mathcal{F}^{s}$,
	we smooth it with respect to the orientations on the paths.
    Then the paths are still positively transverse to $\mathcal{F}^{s}$.}
\end{figure}

\begin{construction}\rm\label{arcs}
	Let $$A_+ = \{\text{positive stable segments in } \partial \Sigma\},$$
	$$A_- = \{\text{negative stable segments in } \partial \Sigma\}.$$
	Note that $|A_+| = |A_-|$.
	We choose a bijection $j: A_- \to A_+$.
	For each $\sigma \in A_-$,
	we choose a path $r_\sigma$ that 
	starts at some point in $\operatorname{Int}(\sigma)$,
	positively transverse to $\mathcal{F}^{s}$ and disjoint from the singularities of $\mathcal{F}^{s}$,
	and ends at some point in $\operatorname{Int}(j(\sigma))$,
	where the procedure is as follows:
	
	$\bullet$
	We draw a positively oriented transversal $\tau_1: I \to \Sigma$ of $\mathcal{F}^{s}$ that
	starts at some point in $\operatorname{Int}(\sigma)$,
	and we draw a negatively oriented transversal $\tau_2: I \to \Sigma$ of $\mathcal{F}^{s}$ that
	starts at some point in $\operatorname{Int}(j(\sigma))$.
	We assume that both $\tau_1$ and $\tau_2$ are disjoint from the singularities of $\mathcal{F}^{s}$.
	Recall from \hyperref[FM]{[FM, Corollary 14.15]},
	every leaf of $\mathcal{F}^{s}$ that is not contained in $\partial \Sigma$ is dense in $\Sigma$.
	So there exists a non-singular leaf $\lambda$ of $\mathcal{F}^{s}$ and $t_1, t_2 \in I$ such that
	$\tau_1(t_1), \tau_2(t_2) \in \lambda$.
	Let $r^{''}_{\sigma}$ denote 
	the immersed path that starts at $\tau_1(0)$ and goes along $\tau_1([0,t_1])$ to $\tau_1(t_1)$,
	then goes along $\lambda$ to $\tau_2(t_2)$,
	finally goes along the inverse direction of $\tau_2([0,t_2])$ and ends at $\tau_2(0)$.
	Then $r^{''}_{\sigma}$ is positively transverse to $\mathcal{F}^{s}$ except for
	a subarc contained in $\lambda$.
	Let $\eta$ denote this subarc.
	Since $\lambda$ is a non-singular leaf and $\eta$ is compact,
	$\eta$ has a product neighborhood $\eta \times I$ in $\mathcal{F}^{s}$.
	We can isotope $r^{''}_{\sigma}$ in $\eta \times I$ to make it transverse to $\mathcal{F}^{s}$ and
	keep it still disjoint from the singularities of $\mathcal{F}^{s}$.
	Then we obtain an immersed path $r^{'}_{\sigma}$ which 
	is positively transverse to $\mathcal{F}^{s}$.
	We smooth every self-intersection point of $r^{'}_{\sigma}$ with respect to
	the orientation on $r^{'}_{\sigma}$ (Figure \ref{smoothing}).
	This operation produces an oriented properly embedded arc and a (possibly empty) union of oriented circles,
	and we remove all these circles.
	Let $r_\sigma$ denote the remained oriented properly embedded arc.
	
	We can construct $\{r_\sigma \mid \sigma \in A_-\}$ one-by-one to
	make them only have double intersection points
	(this can be guaranteed since each $r_\sigma$ is disjoint from the singularities of $\mathcal{F}^{s}$) 
	and satisfy that 
	$\varphi$ does not take any endpoint in $\{r_\sigma \mid \sigma \in A_-\}$ to
	another endpoint in it.
	We smooth every double intersection point of $\bigcup_{\sigma \in A_-} r_\sigma$ 
	(which is a non-singular point of $\mathcal{F}^{s}$) with respect to the orientations on the paths
	(Figure \ref{smoothing})
	to obtain a finite union of disjoint oriented properly embedded arcs
	which are positively transverse to $\mathcal{F}^{s}$.
	Let $\alpha$ denote this union of oriented arcs.
\end{construction}

We note that

\begin{fact}\rm\label{endpoint}
	Each negative stable segment contains
	exactly one starting endpoint of some path in $\alpha$ and
	no ending endpoint,
	and each positive stable segment contains
	exactly one ending endpoint of some path in $\alpha$ and
	no starting endpoint.
\end{fact}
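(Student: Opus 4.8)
The plan is to follow the boundary endpoints of the curve system through every stage of Construction~\ref{arcs} and to check that none of the steps performed there---drawing the transversals $\tau_1,\tau_2$, passing from $r''_\sigma$ to $r'_\sigma$, smoothing the self-intersections of $r'_\sigma$ and deleting the resulting circles, and finally smoothing the double intersection points of $\bigcup_{\sigma\in A_-}r_\sigma$---alters the set $\bigl(\bigcup_\sigma r_\sigma\bigr)\cap\partial\Sigma$ or the labelling of each of its points as a starting or an ending endpoint. Once this is established, the Fact reduces to an elementary count coming from the bijection $j\colon A_-\to A_+$.

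First I would record the state of affairs just before the final smoothing. By construction each $r_\sigma$ (for $\sigma\in A_-$) is a properly embedded oriented arc whose starting endpoint is $\tau_1(0)\in Int(\sigma)$ and whose ending endpoint is $\tau_2(0)\in Int(j(\sigma))$: the operations used to build $r_\sigma$ out of $r'_\sigma$---smoothing interior self-crossings, then discarding circle components---take place in $Int(\Sigma)$ and leave these two boundary points, with their roles, untouched, and following the orientation from $\tau_1(0)$ in the resolved system one reaches $\tau_2(0)$, so $\tau_1(0)$ is indeed the starting endpoint of $r_\sigma$. Since $j$ is a bijection, the $2|A_-|$ endpoints of $\bigcup_{\sigma\in A_-}r_\sigma$ are distributed as follows: each $\sigma\in A_-$ carries exactly one of them, the starting endpoint of $r_\sigma$, and no ending endpoint; each $\sigma'\in A_+$ carries exactly one of them, the ending endpoint of $r_{j^{-1}(\sigma')}$, and no starting endpoint. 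As in the construction we may also assume these endpoints are pairwise distinct, so that $\bigcup_{\sigma\in A_-}r_\sigma$ is an immersed compact oriented $1$-manifold all of whose singularities are transverse double points lying in $Int(\Sigma)$.

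Next I would treat the final oriented smoothing. Every double point being smoothed lies in $Int(\Sigma)$, so the smoothing is supported away from $\partial\Sigma$; hence $\bigl(\bigcup_{\sigma\in A_-}r_\sigma\bigr)\cap\partial\Sigma$, together with the labelling of its points as starting or ending endpoints, is unchanged. The coherently oriented resolution of a generic immersed compact oriented $1$-manifold with boundary is again a disjoint union of embedded oriented arcs and embedded oriented circles with the same boundary; a circle has no boundary points, and, starting from any point of $\partial\Sigma$ labelled ``starting'' and following the orientation, one has at every resolved crossing a unique coherent continuation and never returns to $\partial\Sigma$ until one arrives at a point labelled ``ending''. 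Consequently $\alpha$, obtained from this resolution by discarding the circle components, meets $\partial\Sigma$ in precisely the endpoints of $\bigcup_{\sigma\in A_-}r_\sigma$, each retaining its starting/ending label. Combining this with the count from the previous paragraph yields exactly the statement of Fact~\ref{endpoint}.

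The only step requiring genuine care---and the one I would flag as the main (if modest) obstacle---is the assertion about the coherently oriented resolution: that resolving every transverse double point of $\bigcup_{\sigma\in A_-}r_\sigma$ consistently with the orientations yields a disjoint union of embedded arcs and circles, with the boundary and the starting/ending labels behaving as described. This is the standard ``smoothing an immersed oriented curve with respect to its orientation'' phenomenon illustrated in Figure~\ref{smoothing}, and it relies on having arranged that all crossings are transverse double points and that the endpoints on $\partial\Sigma$ are distinct; granting it, the rest of the argument is bookkeeping driven by the bijection $j$.
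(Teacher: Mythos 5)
Your proposal is correct and follows essentially the same route as the paper, which states Fact~\ref{endpoint} as an immediate consequence of Construction~\ref{arcs}: the endpoints $\tau_1(0)\in Int(\sigma)$ and $\tau_2(0)\in Int(j(\sigma))$ are untouched by the interior smoothings and circle deletions, and the bijection $j$ gives the count. Your explicit bookkeeping of the oriented resolution just spells out what the paper leaves implicit.
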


Let $$S(\alpha) = (\Sigma \times \{0\}) \cup (\alpha \times I) / \stackrel{\varphi}{\sim}$$ be a standard spine in $M$,
which is the image of the subspace $(\Sigma \times \{0\}) \cup (\alpha \times I) \subseteq \Sigma \times I$ 
under the quotient map $\Sigma \times I \to M$.
In the following statements,
whenever we mention subspaces of $\Sigma \times I$,
we will always regard them as their images in $M$.
We now construct a branched surface $B(\alpha)$ as follows.

\begin{defn}\rm\label{B(alpha)}
	(a)
	We assign the singular set of $S(\alpha)$ cusp directions such that,
	for each component $\gamma$ of $\alpha$,
	the cusp direction at $\gamma \times \{0\}$ points to its left side in $\Sigma \times \{0\}$,
	and the cusp direction at $\gamma \times \{1\} = \varphi(\gamma) \times \{0\}$
	points to its right side in $\Sigma \times \{0\}$.
	Let $B(\alpha)$ denote the resulting branched surface.
	
	(b)
	For each component $\gamma$ of $\alpha$,
	we call $\gamma \times I$ a \emph{product disk} and 
	call $\gamma \times \{0\}$ (resp. $\gamma \times \{1\}$) the \emph{lower arc}
	(resp. \emph{upper arc}) of this product disk.
	
	(c)
	Note that both $\alpha$ and $\varphi(\alpha)$ contain no singularities of $\mathcal{F}^{s}$,
	and that $\partial \alpha \cap \varphi(\partial \alpha) = \emptyset$ since $\varphi$ is co-orientation-reversing.
	There is a union of oriented properly embedded arcs $\beta$ in $\Sigma$ such that,
	$\beta$ is isotopic to $\varphi(\alpha)$ relative to the endpoints,
	$\beta$ is transverse to $\mathcal{F}^{s}$,
	and $\alpha,\beta$ only have double intersection points.
	We isotope $\alpha \times I$ relative to 
	$(\alpha \times \{0\}) \cup (\partial \alpha \times I)$ so that
	the upper arcs of $\alpha \times I$ are isotoped to $\beta \times \{0\}$.
	This makes $B(\alpha)$ locally modeled as in Figure \ref{branched surface}.
\end{defn}

As illustrated in Figure \ref{co-orientation},
the fibered surface $\Sigma \times \{0\}$ and
all product disks of $B(\alpha)$ admit compatible co-orientations.
Thus
$B(\alpha)$ is a co-orientable branched surface.
In addition,
$N(B(\alpha))$ has product complementary regions.

\begin{figure}\label{co-orientation}
	\centering
	\includegraphics[width=0.5\textwidth]{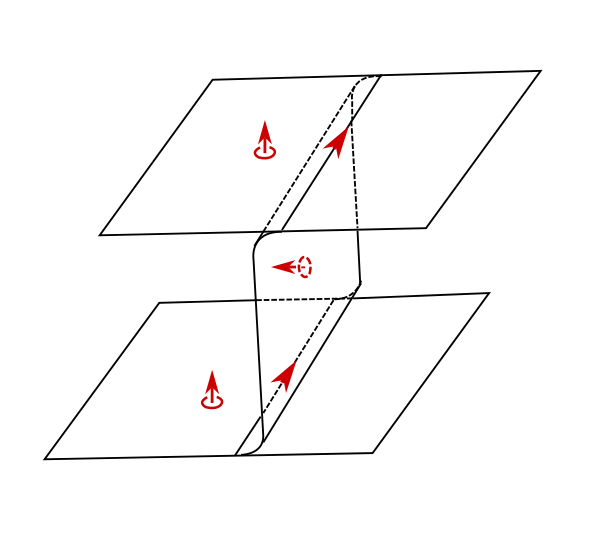}
	\caption{In this picture, the horizontal sectors lie in $\Sigma \times \{0\}$,
		and the vertical sector is contained in a product disk of $B(\alpha)$.  
		The arrows along the branch locus indicate the orientation induced by $\alpha$.
		As shown by the arrows on the sectors (each representing a co-orientation),  
		all sectors can be assigned compatible co-orientations.}
\end{figure}

\subsection{Verifying that $B(\alpha)$ is laminar}\label{subsection 3.2}
In this subsection,
we verify that $B(\alpha)$ is a laminar branched surface.

\begin{lm}\label{transverse}
	Let $\rho$ be an oriented simple closed curve or an oriented properly embedded arc in $\Sigma$ such that 
	$\rho$ can be divided into finitely many segments which 
	are either positively transverse to $\mathcal{F}^{s}$ or
	tangent to $\mathcal{F}^{s}$,
	and at least one of these segments is positively transverse to $\mathcal{F}^{s}$.
	Then $\rho$ is essential in $\Sigma$.
\end{lm}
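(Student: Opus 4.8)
The plan is to argue by contradiction: suppose $\rho$ is inessential in $\Sigma$. If $\rho$ is a simple closed curve, then either $\rho$ bounds a disk in $\Sigma$ or $\rho$ is boundary-parallel; if $\rho$ is a properly embedded arc, then $\rho$ together with a subarc of $\partial\Sigma$ cobounds a disk $D$ in $\Sigma$ (i.e.\ $\rho$ is $\partial$-parallel). In all of these cases we obtain an embedded disk $D \subseteq \Sigma$ whose boundary is built out of (a copy of) $\rho$, possibly together with an arc of $\partial \Sigma$. The strategy is to derive a contradiction by examining the singular foliation $\mathcal{F}^s$ restricted to $D$ via the Euler–Poincaré (index) formula for foliations with singularities on a surface with boundary.

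First I would set up the index count. Since $\alpha$ and the curves/arcs under consideration avoid the singularities of $\mathcal{F}^s$, and $\rho$ is transverse to $\mathcal{F}^s$ along the positively-transverse segments and tangent to $\mathcal{F}^s$ along the tangential segments, the leaves of $\mathcal{F}^s$ meet $\partial D$ in a controlled way: along each transverse segment there are no tangencies, and along each tangential segment the leaf coincides with $\partial D$. The contribution of $\partial D$ to the Euler–Poincaré formula comes only from the transition points between transverse and tangential segments (where a "fold"/tangency of $\partial D$ with the foliation occurs) and, if $\partial D$ contains a boundary arc of $\Sigma$, from the singularities of $\mathcal{F}^s$ on that arc (the boundary prongs). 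Because $\rho$ has at least one positively-transverse segment, one sees that the net "turning" of the foliation along $\partial D$ cannot be as large as the index formula demands: a disk would require total index (counting interior prong singularities and boundary contributions) equal to $+1$ (suitably normalized, e.g.\ $2\chi(D)=2$ with the usual prong weighting), and a pseudo-Anosov stable foliation has no $1$-pronged interior singularities and only $\geqslant 3$-pronged interior singularities, while the boundary singularities and the transition tangencies all contribute with a sign that makes the total too negative. This is the standard Poincaré–Hopf obstruction used to show that branches of the stable/unstable foliation carry only essential curves.

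The key steps in order: (1) reduce "inessential" to the existence of an embedded disk $D$ with $\partial D$ a union of an essential-in-$\Sigma$-free copy of $\rho$ and at most one arc $a \subseteq \partial\Sigma$; (2) put $\mathcal{F}^s|_D$ in general position and count, using the Euler–Poincaré formula, the interior $k$-pronged singularities (each contributing $(2-k)$), the boundary $k$-pronged singularities on $a$ (each contributing $1-k$ or similar, appearing only if $\partial D$ meets $\partial \Sigma$), and the tangency/transition points between the transverse and tangential portions of $\rho$; (3) use that along a positively-transverse segment the foliation crosses $\partial D$ coherently, so that segment contributes no positive index and in fact forces the transition points to contribute nonpositively; (4) conclude that the total index is $\le 0 < 2 = 2\chi(D)$, a contradiction. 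The hard part will be bookkeeping the boundary index contributions correctly — in particular making precise how a tangential segment (a subarc of $\partial D$ lying in a leaf) and the transitions into positively-transverse segments enter the Euler–Poincaré count, and handling the case where $\partial D$ runs partly along $\partial \Sigma$ (so that honest boundary singularities of $\mathcal{F}^s$ on $\Sigma$ appear in the sum). Once the sign of every boundary contribution is pinned down, the arithmetic is immediate from the pseudo-Anosov hypothesis (no $1$-prongs in the interior, $\ge 1$-prongs allowed only at $\partial\Sigma$ with the co-orientability constraint ensuring an even count).
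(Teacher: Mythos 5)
Your overall strategy (an Euler--Poincar\'e index count for $\mathcal{F}^{s}|_{D}$ on a disk $D$ bounded by $\rho$, possibly together with an arc of $\partial\Sigma$) is genuinely different from the paper's two proofs, which avoid any index computation: the first picks a leaf $\lambda$ meeting $\rho$ transversely and with no tangencies, notes that $\lambda\cap D$ is a union of chords, and observes that $\rho$ must cross $\lambda$ with opposite signs at the two endpoints of each chord, contradicting positive transversality; the second lifts to the universal cover and uses that the co-orientation makes the lifted path monotone in the (simply connected) leaf space of the geodesic lamination. In principle an index argument can be made to work, but as written your proposal has a genuine gap, and it sits exactly at the point you defer as ``bookkeeping.''

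The decisive step is your claim that the transition (corner) contributions between tangential and positively transverse segments are nonpositive, so that the total is $\leqslant 0 < 2\chi(D)$. This is not a routine detail: it is the \emph{only} place where the hypothesis of \emph{positive} transversality (coherence of the crossing direction with the co-orientation) can enter, and without it the statement is simply false -- a round circle in the horizontal foliation of the plane is transverse except at two tangencies and bounds a disk; there the two convex corners (equivalently, the exterior tangencies) contribute \emph{positively}, which is how $2\chi(D)=2$ is achieved with no singularities at all. Likewise a product-foliated bigon has $\chi=1$, no singularities, and its entire count is carried by positive corner contributions. So your blanket assertion that ``the boundary singularities and the transition tangencies all contribute with a sign that makes the total too negative'' is incorrect as stated. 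What positivity actually gives is finer: at a tangential segment lying in an interior leaf, the curve must arrive on the negative side and leave on the positive side, producing one convex and one concave corner whose contributions cancel; at a tangential segment lying in $\partial\Sigma$ both corners are convex (hence positive), and they must be offset by the boundary prong singularities that positivity forces the segment to contain (the segment runs from a stable segment where the co-orientation points outward to one where it points inward, so it must cross a singular point). None of this pairing analysis appears in your proposal, and without it the inequality you need does not follow from the structure of the decomposition alone. (A minor additional point: for the paper's purposes ``non-essential'' means bounding a disk, resp.\ cutting off a disk together with $\partial\Sigma$, so the boundary-parallel closed-curve case you add to the reduction is not needed.)
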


We offer two proofs of this lemma.
A proof is presented below, and an alternative proof,
based on the structure of the leaf space of $\mathcal{F}^{s}$, 
is given in the Appendix \ref{appendix}.

\begin{proof}
	We first consider the case where $\rho$ is an oriented simple closed curve.
	We assume that $\rho$ is non-essential.
	Then there is an embedded disk $D \subseteq \Sigma$ such that $\rho = \partial D$.

	We refer to each leaf of $\mathcal{F}^{s}$ without singularities as a \emph{regular leaf} of $\mathcal{F}^{s}$.
	Since every leaf of $\mathcal{F}^{s}$ that intersects $\partial \Sigma$ contains some singularity on $\partial \Sigma$,
	every regular leaf of $\mathcal{F}^{s}$ must be contained in $\operatorname{Int}(\Sigma)$.
	We now choose a regular leaf of $\mathcal{F}^{s}$ that has only transverse intersections with $\rho$, as follows.
	Note that $\mathcal{F}^{s}$ has finitely many singular leaves and uncountably many regular leaves,
	and only finitely many leaves of $\mathcal{F}^{s}$ have tangent intersections with $\rho$.
	Thus, there exists a regular leaf $\lambda$ of $\mathcal{F}^{s}$ that has no tangent intersection with $\rho$.
	Since $\lambda$ is dense in $\Sigma$ (\hyperref[FM]{[FM, Corollary 14.15]}),
	$\lambda$ must intersect $D$,
	and $\lambda$ is transverse to $\rho$ at each intersection $\lambda \cap \rho$.

	We note that $\mathcal{F}^{s}$ can be split open along the singular leaves,
	to obtain a lamination carried by some train track $\tau^{s}$.
	Since each component of $\tau^{s} \cap D$ is compact,
	we can ensure that each component of $\lambda \cap D$ is also compact.
	As $\lambda$ is a regular leaf of $\mathcal{F}^{s}$,
	$\lambda \cap D$ can only be a union of closed segments.
	Let $s$ be a component of $\lambda \cap D$.
	Then $\rho$ is positively transverse to $\lambda$ at one endpoint of $s$ and is
	negatively transverse to $\lambda$ at the other endpoint of $s$
	(Figure \ref{intersection}).
	This is a contradiction.
	
	\begin{figure}\label{intersection}
		\centering
		\includegraphics[width=0.35\textwidth]{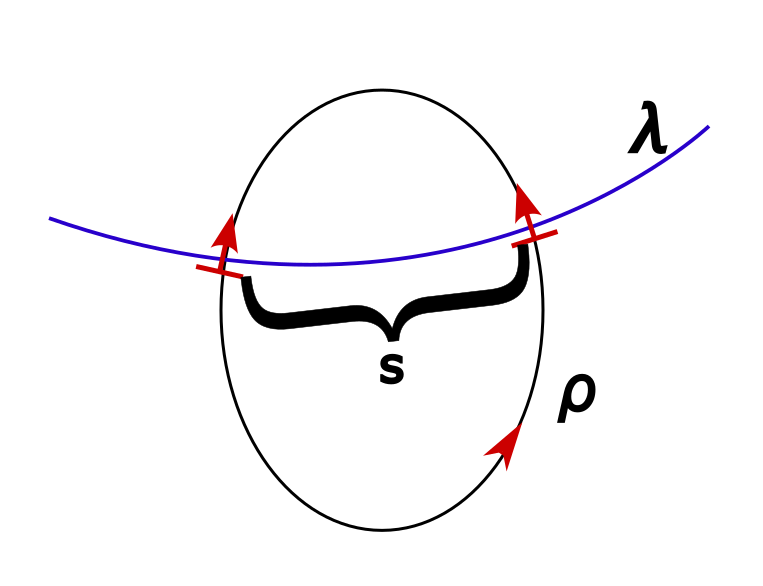}
		\caption{For a component $s$ of $\lambda \cap D$,
			$\rho$ is positively transverse to $\lambda$ and
			negatively transverse to $\lambda$ at
			the two endpoints of $s$ respectively.}
	\end{figure}
	
	At last, we consider the case that $\rho$ is an oriented properly embedded arc.
	If $\rho$ is non-essential,
	then there is an embedded disk $D \subseteq \Sigma$ with $\partial D \subseteq \rho \cup \partial \Sigma$.
	As in the previous case,
	we can still choose a regular leaf $\mu$ of $\mathcal{F}^{s}$ that has only transverse intersections with $\rho$.
	Then $\rho$ must be negatively transverse to $\mu$ at some point.
	This is also a contradiction.
\end{proof}

Because each component of $\alpha$ is positively transverse to $\mathcal{F}^{s}$,

\begin{cor}
	Each component of $\alpha$ is essential in $\Sigma$.
\end{cor}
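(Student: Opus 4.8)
The plan is to obtain this as an immediate application of Lemma \ref{transverse}. Recall from the end of Construction \ref{arcs} that $\alpha$ is a finite disjoint union of oriented properly embedded arcs in $\Sigma$, each of which is positively transverse to $\mathcal{F}^{s}$ (this transversality is preserved throughout the construction, since smoothing a self-intersection or a double intersection with respect to the orientations keeps the paths positively transverse to $\mathcal{F}^{s}$, as in Figure \ref{smoothing}). So I would simply fix a component $\gamma$ of $\alpha$, regard it as an oriented properly embedded arc, and take the trivial subdivision of $\gamma$ into the single segment $\gamma$ itself. This exhibits $\gamma$ as a concatenation of finitely many segments each either positively transverse to or tangent to $\mathcal{F}^{s}$, at least one of which — here the only one — is positively transverse. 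Hence $\gamma$ satisfies the hypothesis of Lemma \ref{transverse} in the properly-embedded-arc case, and the lemma gives that $\gamma$ is essential in $\Sigma$.

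Since $\gamma$ was an arbitrary component of $\alpha$, this shows every component of $\alpha$ is essential, which is exactly the claim. There is no genuine obstacle: all of the content lives in Lemma \ref{transverse}, whose two proofs (the disk/geodesic-lamination argument counting signs of intersections with a leaf, or the argument via the order-tree structure of the leaf space of the lift of $\Lambda^{s}$) are already given above. The only point worth remarking is that a curve which is everywhere positively transverse to $\mathcal{F}^{s}$ trivially meets the ``divided into finitely many transverse-or-tangent segments'' condition, so nothing beyond a verbatim invocation of the lemma is required.
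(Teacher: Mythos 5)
Your proposal is correct and is exactly the paper's argument: each component of $\alpha$ is positively transverse to $\mathcal{F}^{s}$ by construction, so Lemma \ref{transverse} (in the properly embedded arc case, with the trivial one-segment subdivision) immediately gives essentiality. Nothing further is needed.
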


To verify that $B(\alpha)$ contains no trivial bubble,
it suffices to show that $B(\alpha)$ has no $3$-ball complementary region.
Note that $M \setminus \setminus B(\alpha)$ is homeomorphic to $(\Sigma \setminus \setminus \alpha) \times I$.
If there is a $3$-ball complementary region of $B(\alpha)$,
then $\Sigma \setminus \setminus \alpha$ must have some disk component.
Now we exclude this case.

\begin{lm}\label{alpha}
	$\Sigma \setminus \setminus \alpha$ contains no disk component.
\end{lm}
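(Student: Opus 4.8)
The plan is to show that a disk component of $\Sigma \setminus\setminus \alpha$ is incompatible with the fact that every component of $\alpha$ is positively transverse to $\mathcal{F}^s$ (away from finitely many singularities of $\mathcal{F}^s$ contained in $\partial\Sigma$). Suppose for contradiction that $D$ is a disk component of $\Sigma \setminus\setminus \alpha$. Its boundary $\partial D$ is a cyclic concatenation of subarcs of components of $\alpha$ and subarcs of $\partial\Sigma$; at the corners where these meet we have the singularities of $\mathcal{F}^s$ sitting on $\partial\Sigma$ (recall from Fact~\ref{endpoint} that every endpoint of a component of $\alpha$ lies in the interior of a stable segment, so the corners of $D$ are genuine corners between an $\alpha$-piece and a $\partial\Sigma$-piece, not between two $\alpha$-pieces). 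The idea is to produce from $\partial D$ an oriented closed curve or properly embedded arc that is positively transverse to $\mathcal{F}^s$ wherever it meets a leaf transversally, and then invoke Lemma~\ref{transverse} to conclude it is essential, contradicting that it bounds (or co-bounds with $\partial\Sigma$) the disk $D$.

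**First I would** set up the orientations carefully. The components of $\alpha$ carry orientations making them positively transverse to $\mathcal{F}^s$. I want to walk around $\partial D$ consistently; the key observation is that the subarcs of $\partial D$ lying on $\partial\Sigma$ are tangent to $\mathcal{F}^s$ (since each boundary component of $\Sigma$ is a leaf of $\mathcal{F}^s$, or a union of singular leaves and singularities), so they contribute ``tangent to $\mathcal{F}^s$'' segments, which is exactly what Lemma~\ref{transverse} allows. The subtlety is whether the induced orientation on $\partial D$ agrees with the preferred orientation on each $\alpha$-subarc it traverses. Here is where the sign conventions in Definition~\ref{B(alpha)}(a) and the positivity/negativity of stable segments from Construction~\ref{arcs} enter: each $\alpha$-arc starts on a negative stable segment and ends on a positive stable segment (Fact~\ref{endpoint}), and this constrains how consecutive $\alpha$-arcs and $\partial\Sigma$-arcs fit together around a disk. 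I expect that tracing the boundary one finds that all $\alpha$-pieces are traversed in their positive direction, so $\partial D$ (or the arc obtained from it after discarding the $\partial\Sigma$-pieces, or pushing them slightly into the interior) satisfies the hypotheses of Lemma~\ref{transverse}: finitely many segments, each positively transverse or tangent to $\mathcal{F}^s$, and at least one genuinely transverse (since $\alpha\neq\emptyset$ and $\partial D$ uses at least one $\alpha$-arc, as $\partial\Sigma$ itself has no disk components).

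**The hard part will be** ruling out the case where $\partial D$ traverses some $\alpha$-subarc in the \emph{wrong} direction, which would break the transversality sign and make Lemma~\ref{transverse} inapplicable directly. To handle this I would argue as follows: if $D$ is innermost among disk complementary regions, then $\partial D$ meets each component of $\alpha$ in a single subarc, and an Euler characteristic / corner count on $D$ shows $\partial D$ alternates between $\alpha$-arcs and $\partial\Sigma$-arcs with the corners all being singularities on $\partial\Sigma$; since a stable segment that is positive has both neighbors negative and vice versa, and $\alpha$-arcs run from negative to positive segments, the only consistent way to close up around $D$ forces the cyclic orientation to agree with the positive orientation on every $\alpha$-arc. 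Alternatively — and this may be cleaner — I would use the second proof of Lemma~\ref{transverse}: lift $\partial D$ to the universal cover $\widetilde\Sigma$, project the transverse $\alpha$-pieces to the oriented leaf space $L(\Lambda^s)$, and observe that a disk would force a closed oriented loop in the simply connected order tree $L(\Lambda^s)$, which is impossible once we know at least one projected segment is nondegenerate. The wrong-direction worry dissolves here because the net displacement in $L(\Lambda^s)$ contributed by the $\alpha$-arcs must vanish around $\partial D$, yet each contributes with a definite sign determined by the construction, so not all contributions can be made to cancel.

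**Finally** I would note the edge cases: $D$ cannot be entirely bounded by $\partial\Sigma$ (that would make $\Sigma$ a disk, contradicting that $\varphi$ is pseudo-Anosov), and $D$ cannot be bounded entirely by a single $\alpha$-arc together with no $\partial\Sigma$ (an $\alpha$-arc is properly embedded, not a closed curve). So $\partial D$ genuinely mixes the two types and the argument above applies, giving the contradiction and proving $\Sigma\setminus\setminus\alpha$ has no disk component.
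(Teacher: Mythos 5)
Your overall strategy is the same as the paper's: apply Lemma~\ref{transverse} to $\partial D$, after arguing that $\partial D$ can be oriented so that its $\alpha$-pieces are traversed positively and its $\partial\Sigma$-pieces are tangent to $\mathcal{F}^{s}$. You also correctly identify the crux (orientation consistency along $\partial D$) and the relevant input (Fact~\ref{endpoint} plus the alternation of positive and negative stable segments). However, the crucial step is asserted rather than proved, and both of your proposed justifications have problems. The ``innermost disk'' reduction and the Euler characteristic/corner count do not lead anywhere (complementary regions are disjoint, so ``innermost'' has no content, and no corner count is needed), and your description of the corners is inaccurate: the corners of $D$ are endpoints of arcs of $\alpha$, which lie in the \emph{interiors} of stable segments, not at singularities of $\mathcal{F}^{s}$. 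What is actually needed, and what the paper supplies, is a propagation (walk) argument: starting at the ending endpoint of an arc of $\alpha$ on $\partial D$ (which lies in a positive stable segment), the next corner reached along $\partial D$ must be a \emph{starting} endpoint lying in an adjacent negative stable segment, because by Fact~\ref{endpoint} the positive segment contains no further endpoints, the adjacent negative segment contains only a starting endpoint, and the boundary of a complementary region cannot pass an endpoint of $\alpha$ while staying in $\partial\Sigma$ --- it must turn onto that arc. Hence the next arc is traversed from its start to its end, i.e.\ positively, and consistency propagates all the way around $\partial D$. This argument is absent from your write-up.

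Your fallback via the second proof of Lemma~\ref{transverse} does not repair the gap: the sign of the displacement that an $\alpha$-piece contributes in the leaf space $L(\Lambda^{s})$ is determined by the \emph{direction in which $\partial D$ traverses it}, not by the construction alone, so if some arcs were traversed against their orientations the contributions would have opposite signs and could in principle cancel, allowing a closed loop's projection to return to its starting point. The claim that ``the wrong-direction worry dissolves'' is therefore unfounded; the monotonicity argument only applies once one already knows that all $\alpha$-pieces are traversed positively, which is exactly the combinatorial statement you left unproved. In short: right framework, but the key lemma-specific content (the walk around $\partial D$ using Fact~\ref{endpoint}) is missing, and the proposed substitute is not correct.
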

\begin{proof}
	\begin{figure}\label{no disk}
		\centering
			\includegraphics[width=0.45\textwidth]{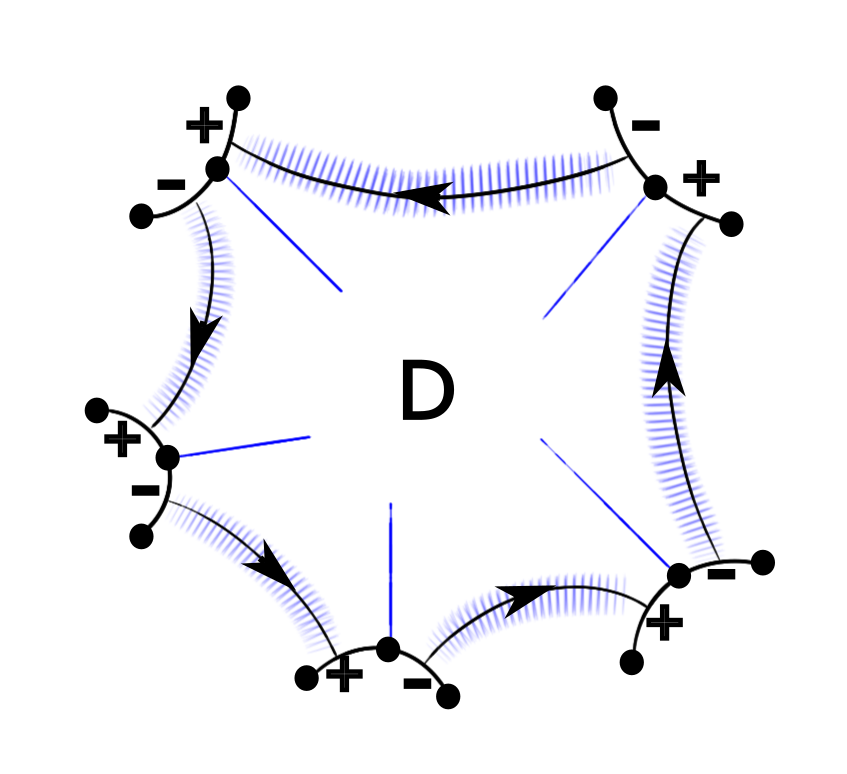}
		\caption{This picture describes the disk component $D$ of 
			$\Sigma \setminus \setminus \alpha$ assumed to exist in the proof of Lemma \ref{alpha}.
			The dots are singularities of $\mathcal{F}^{s}$ contained in $\partial \Sigma$,
			the blue lines are leaves of $\mathcal{F}^{s}$,
			and the segments labeled with $+, -$ are positive stable segments,
			negative stable segments respectively.}
	\end{figure}
	
	Assume that $\Sigma \setminus \setminus \alpha$ contains a disk component $D$.
	In the following discussions,
	the clockwise and anticlockwise orientations on $\partial D$ will always be with respect to 
	the orientation on $\Sigma$
	(then $D$ is in the left side of $\partial D$ if $\partial D$ has the anticlockwise orientation).
	Let $\gamma: I \to \Sigma$ be a component of $\alpha$ contained in $\partial D$,
	and we may assume that
	the orientation on $\gamma$ is consistent with the anticlockwise orientation on $\partial D$.
	Now draw a path $\eta: I \to \partial D$ that
	starts at $\gamma(1)$,
	goes along the anticlockwise orientation on $\partial D$,
	and ends at $\gamma(1)$.
	Recall that $\gamma(1)$ is contained in a positive stable segment in $\partial \Sigma$,
	and this positive stable segment does not contain any other endpoint of $\alpha$
	(Fact \ref{endpoint}).
	So $\eta$ first goes to a negative stable segment adjacent to the positive stable segment containing $\gamma(1)$, 
	and then goes through another arc in $\alpha$ along its orientation,
	and so on (Figure \ref{no disk}).
	So the anticlockwise orientation on $\partial D$ is consistent with 
	the orientation on each component of $\alpha$ contained in $\partial D$.
	Thus,
	the anticlockwise orientation on $\partial D$ is positively transverse to $\mathcal{F}^{s}$ 
	in $\partial D \cap \alpha$ and
	tangent to $\mathcal{F}^{s}$ in $\partial D \cap \partial \Sigma$.
	This contradicts Lemma \ref{transverse}.
	So $\Sigma \setminus \setminus \alpha$ contains no disk component.
\end{proof}

It follows that

\begin{cor}\label{no trivial bubble}
	$B(\alpha)$ contains no trivial bubble.
\end{cor}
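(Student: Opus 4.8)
The plan is to deduce Corollary~\ref{no trivial bubble} directly from Lemma~\ref{alpha} using the structure of the complement $M \setminus\setminus B(\alpha)$. First I would record the key observation already hinted at in the text: since $B(\alpha) = (\Sigma \times \{0\}) \cup (\alpha \times I)$ is obtained by attaching the product disks $\alpha \times I$ to the fibered surface $\Sigma \times \{0\}$, cutting $M$ along $B(\alpha)$ reproduces the mapping-torus structure with $\Sigma$ replaced by $\Sigma \setminus\setminus \alpha$; more precisely, after the isotopy of Definition~\ref{B(alpha)}(c) that pushes the upper arcs of the product disks to $\beta \times \{0\}$, one checks that $M \setminus\setminus B(\alpha)$ is homeomorphic to $(\Sigma \setminus\setminus \alpha) \times I$, the fibered neighborhood $N(B(\alpha))$ absorbing everything else. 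Thus the components of $M \setminus\setminus B(\alpha)$ are in bijection with the components of $\Sigma \setminus\setminus \alpha$, and a component is a $3$-ball if and only if the corresponding component of $\Sigma \setminus\setminus \alpha$ is a disk.

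Next I would recall the definition of a trivial bubble: it is a $3$-ball component $Q$ of $M - Int(N(B))$ whose intersection with $\partial_h N(B)$ consists of two disks on which the collapsing map $\pi$ is injective. In particular, any trivial bubble is a $3$-ball complementary region of $B(\alpha)$. So it suffices to show that $B(\alpha)$ has no $3$-ball complementary region at all. By the bijection above, this is equivalent to $\Sigma \setminus\setminus \alpha$ having no disk component, which is exactly the content of Lemma~\ref{alpha}. Hence $B(\alpha)$ contains no trivial bubble.

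I expect the only subtle point — and the main thing worth spelling out carefully — to be the identification $M \setminus\setminus B(\alpha) \cong (\Sigma \setminus\setminus \alpha) \times I$, i.e.\ verifying that collapsing the fibered neighborhood $N(B(\alpha))$ really does reduce the complement to a product over the cut-open fiber with no extra pieces, and in particular that no new disk complementary regions are created by the attaching/smoothing data. This follows from the local model in Figure~\ref{branched surface} together with the fact that each product disk $\gamma \times I$ is attached along $\gamma \times \{0\} \subseteq \Sigma \times \{0\}$ and $\gamma \times \{1\} = \varphi(\gamma) \times \{0\} \subseteq \Sigma \times \{0\}$, so cutting along $B(\alpha)$ simultaneously cuts the fiber along $\alpha$ and along $\varphi(\alpha)$ and reglues via $\varphi$; since $\alpha$ and $\varphi(\alpha)$ are isotopic rel endpoints (through $\beta$), this is a product over $\Sigma \setminus\setminus \alpha$. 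Everything else is formal: trivial bubble $\Rightarrow$ $3$-ball complementary region $\Rightarrow$ disk component of $\Sigma \setminus\setminus \alpha$, contradicting Lemma~\ref{alpha}.
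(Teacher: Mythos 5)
Your proposal is correct and follows essentially the same route as the paper: the paper also observes that $M \setminus\setminus B(\alpha) \cong (\Sigma \setminus\setminus \alpha) \times I$, so a trivial bubble (being a $3$-ball complementary region) would force a disk component of $\Sigma \setminus\setminus \alpha$, contradicting Lemma~\ref{alpha}. Your extra care in justifying the product structure of the complement is a reasonable elaboration of the same argument, not a different approach.
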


We've verified that each component of $\alpha$ is essential and
$\Sigma \setminus \setminus \alpha$ contains no disk component.
As shown in \hyperref[S]{[S, Lemma 3.16]},
$B(\alpha)$ satisfies Conditions (b)$\sim$(d) of Definition \ref{essential}.

\begin{remark}\rm
	In \hyperref[S]{[S, Lemma 3.16]}, 
	the branched surface is assumed to have the property that the upper and lower arcs of 
	the product disks intersect efficiently in $\Sigma \times \{0\}$.
	Although our branched surface $B(\alpha)$ does not satisfy this assumption, 
	the conclusion of \hyperref[S]{[S, Lemma 3.16]} still holds for $B(\alpha)$.
	This is because the conditions verified in \hyperref[S]{[S, Lemma 3.16]} 
	depend only on the closed complement $M - \operatorname{Int}(N(B(\alpha)))$,
	and not on the efficiency of intersections.
	Hence $B(\alpha)$ satisfies Conditions (b)$\sim$(d) of Definition \ref{essential}.
\end{remark}

It remains to prove that $B(\alpha)$ contains no sink disk or half sink disk.

\begin{prop}\label{sink disk free}
	$B(\alpha)$ has no sink disk or half sink disk.
\end{prop}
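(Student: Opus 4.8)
The plan is to analyze the branch sectors of $B(\alpha)$ and show that every disk branch sector has at least one boundary segment whose cusp direction points outward, so it can be neither a sink disk nor a half sink disk. Recall that the branch sectors of $B(\alpha)$ are the components of $(\Sigma \times \{0\}) \setminus \setminus \alpha$ together with the product disks $\gamma \times I$ for $\gamma$ a component of $\alpha$. First I would dispose of the product disks: each $\gamma \times I$ has exactly two segments of $L(B(\alpha))$ in its boundary, namely the lower arc $\gamma \times \{0\}$ and the upper arc $\gamma \times \{1\}$, and by the orientation conventions in Definition \ref{B(alpha)}(a) the cusp direction at the lower arc points into $\Sigma \times \{0\}$ (on the left side of $\gamma$) while the cusp direction at the upper arc points into $\Sigma \times \{0\}$ on the right side of $\gamma$ — in either case the cusp points \emph{out of} the product disk $\gamma \times I$. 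Hence no product disk is a sink disk, and since $\gamma$ is a properly embedded arc, $(\gamma \times I) \cap \partial M \ne \emptyset$, so a product disk can only be a candidate for a half sink disk; but the cusps at its two $L(B)$-segments point out, so it is not a half sink disk either.

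The real content is the sectors $D$ coming from components of $(\Sigma \times \{0\}) \setminus \setminus \alpha$, which I identify with the components of $\Sigma \setminus \setminus \alpha$ (pushed to the zero fiber). By Lemma \ref{alpha} no such component is a disk, so it automatically fails to be a sink disk, which requires a disk component; thus I only need to worry about \emph{half} sink disks, i.e.\ disk components $D$ of $\Sigma \setminus \setminus \alpha$ with $D \cap \partial \Sigma \ne \emptyset$ such that the cusp directions at all segments of $\partial D \cap L(B(\alpha))$ point into $D$ — wait, these $D$ are not disks, so we must be careful: a half sink disk must be a disk component of $B(\alpha) \setminus \setminus L(B(\alpha))$, and the segments of $L(B(\alpha))$ in $\partial D$ are precisely the copies of components of $\alpha$ lying along $\partial D$ (the part of $\partial D$ on $\partial \Sigma$ is in $\partial M$, not in $L(B)$). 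The cusp direction at a sub-arc of $\alpha$ on $\partial D$: since $B(\alpha)$ is obtained by attaching the product disk $\gamma \times I$ along $\gamma \times \{0\}$ with cusp toward the left side of $\gamma$, along a segment $\gamma \subseteq \partial D$ the cusp in $\Sigma \times \{0\}$ points to the left of $\gamma$, i.e.\ it points into $D$ exactly when $D$ lies on the left of $\gamma$. So $D$ is a (half) sink disk only if \emph{every} component of $\alpha$ in $\partial D$ has $D$ on its left, equivalently the anticlockwise boundary orientation on $\partial D$ agrees with the orientation on each arc of $\alpha \cap \partial D$ — which is exactly the configuration shown impossible in the proof of Lemma \ref{alpha}, via Lemma \ref{transverse} (positively transverse to $\mathcal{F}^s$ along $\alpha$, tangent along $\partial \Sigma$, forcing $\partial D$ essential). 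The same contradiction rules out these half sink disks.

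So the key steps, in order, are: (1) compute the cusp directions on the two $L(B)$-segments of each product disk from Definition \ref{B(alpha)}(a) and observe both point outward, eliminating product disks; (2) for a disk sector $D$ arising from $\Sigma \setminus \setminus \alpha$, use Lemma \ref{alpha} to note $D$ cannot be a genuine component (hence not a sink disk) and, in the would-be half-sink-disk case, translate "all cusps point into $D$" into "anticlockwise $\partial D$ is co-oriented with every $\alpha$-arc on it," exactly as in the proof of Lemma \ref{alpha}; (3) invoke Lemma \ref{transverse} to derive the contradiction, since such a $\partial D$ would be positively transverse to $\mathcal{F}^s$ somewhere and tangent elsewhere, hence essential, contradicting that it bounds $D$. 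The step I expect to be the main obstacle is step (2): making sure the bookkeeping of which sectors can actually be disks, and of the induced cusp/orientation correspondence, is done correctly when $D$ meets $\partial \Sigma$ in several arcs and meets $\alpha$ in several arcs — essentially re-running the winding argument of Lemma \ref{alpha} but now only needing it for those $\partial D$ whose cusps are all inward-pointing, and checking the product-disk corners do not introduce an extra inward cusp I have overlooked.
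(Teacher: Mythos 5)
There is a genuine gap, and it is located exactly where the paper's key hypothesis has to be used. You identify the branch sectors of $B(\alpha)$ lying in the fiber with the components of $(\Sigma\times\{0\})\setminus\setminus\alpha$, and the segments of $L(B(\alpha))$ on the boundary of such a sector with sub-arcs of $\alpha$ only. This is not correct: the upper arc of each product disk $\gamma\times I$ is $\gamma\times\{1\}=\varphi(\gamma)\times\{0\}$, which also lies in the fiber $\Sigma\times\{0\}$, and after the isotopy of Definition \ref{B(alpha)}(c) the branch locus inside the fiber is $(\alpha\cup\beta)\times\{0\}$ with $\beta$ isotopic to $\varphi(\alpha)$. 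Hence the sectors in the fiber are the components of $\Sigma\setminus\setminus(\alpha\cup\beta)$, and these can perfectly well be disks even though $\Sigma\setminus\setminus\alpha$ has no disk component (Lemma \ref{alpha}). So Lemma \ref{alpha} does not dispose of sink disks, and your reduction of the remaining work to ``half sink disks whose cusps all lie on $\alpha$'' misses all the sectors whose boundary uses segments of $\beta$. Along such a $\beta$-segment the cusp direction points to the \emph{right} of $\varphi(\gamma)$, so ``all cusps point into $D$'' forces that segment to be traversed clockwise; to conclude that the anticlockwise boundary of $D$ is positively transverse to $\mathcal{F}^{s}$ one must know that $\beta$ is \emph{negatively} transverse to $\mathcal{F}^{s}$, which is precisely the consequence of $\varphi$ being co-orientation-reversing. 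Your proposal never invokes that hypothesis, which is a reliable sign the argument cannot be complete: for a co-orientation-preserving $\varphi$ the same bookkeeping on $\alpha$ alone would go through verbatim, yet then segments on $\alpha$ and on $\beta$ are both positively transverse and the contradiction with Lemma \ref{transverse} is not available in the mixed case.

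Your first step (product disks are not sink or half sink disks because the cusps at both the lower and upper arcs point out of them) agrees with the paper and is fine. To repair the rest, you should argue as the paper does: a sink disk or half sink disk $D$ must lie in $\Sigma\times\{0\}$ with $\partial D\cap L(B(\alpha))\subseteq(\alpha\cup\beta)\times\{0\}$; segments in $\alpha\times\{0\}$ have cusp to the left, hence are traversed anticlockwise and positively transversely to $\mathcal{F}^{s}$, while segments in $\beta\times\{0\}$ have cusp to the right, hence are traversed clockwise, and since $\varphi$ reverses the co-orientation, $\beta$ is negatively transverse to $\mathcal{F}^{s}$, so these are again traversed positively transversely; any arcs of $\partial D$ on $\partial\Sigma\times\{0\}$ are tangent to $\mathcal{F}^{s}$. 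Then Lemma \ref{transverse} makes $\partial D$ essential, contradicting that it bounds the disk $D$.
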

\begin{proof}
	\begin{figure}\label{sink disk obstruction}
		\centering
		\subfigure[]{
			\includegraphics[width=0.32\textwidth]{0.png}}
		\subfigure[]{
			\includegraphics[width=0.32\textwidth]{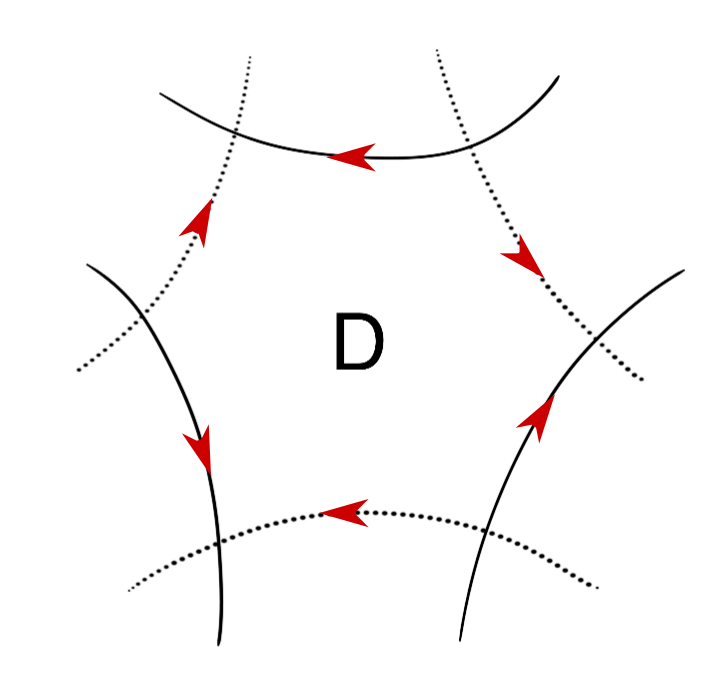}}
		\centering
		\subfigure[]{
			\includegraphics[width=0.32\textwidth]{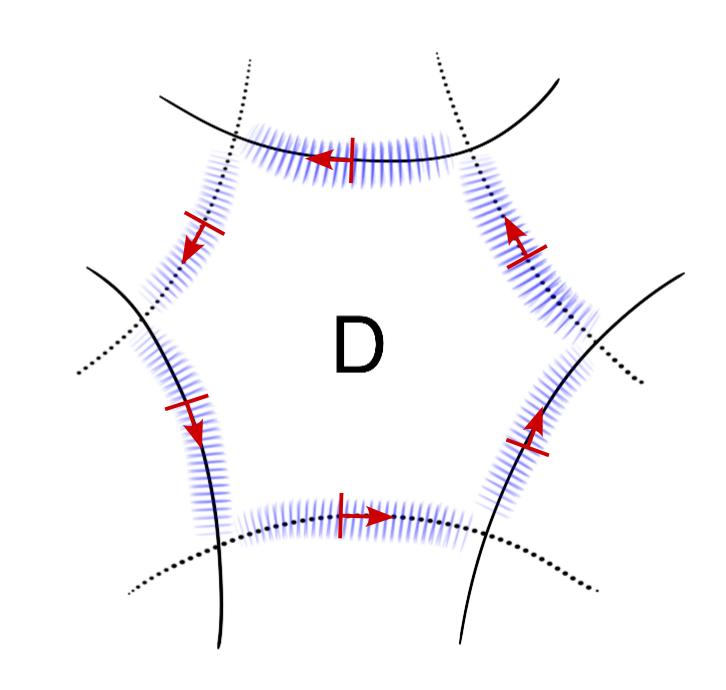}}
		\caption{The solid lines are subarcs of $\alpha \times \{0\}$ and
			the dashed lines are subarcs of $\beta \times \{0\}$.
			For a sink disk $D$,
			(a) illustrates the cusp directions at the segments of $\partial D$,
			(b) illustrates the orientations on the segments of $\partial D$,
			(c) illustrates that the anticlockwise orientation on $\partial D$ is 
			positively transverse to $\mathcal{F}^{s}$.}
	\end{figure}
	
	We first show that every product disk in $B(\alpha)$ is not a sink disk or a half sink disk.
	For every product disk $S$ of $B(\alpha)$,
	the cusp directions at both of its upper arc and lower arc point out of $S$.
	So $S$ is not a sink disk or a half sink disk.
	
	Recall from Definition \ref{B(alpha)} (c) that 
	the product disks of $B(\alpha)$ intersect the fibered surface $\Sigma \times \{0\}$ in the union of properly embedded arcs
	$(\alpha \times \{0\}) \cup (\beta \times \{0\})$,
	and that $\beta$ is isotopic to $\phi(\alpha)$ relative to the endpoints.
	Now assume that $B(\alpha)$ contains a sink disk $D$.
	Then $D \subseteq \Sigma \times \{0\}$ and
	$\partial D \subseteq (\alpha \times \{0\}) \cup (\beta \times \{0\})$.
	
	For a segment $\sigma$ of $\partial D$,
	
	$\bullet$
	Assume $\sigma \subseteq \alpha \times \{0\}$.
	Then the cusp direction at $\sigma$ points to the left side.
	Since the cusp direction at $\sigma$ points into $D$,
	$\sigma$ has the anticlockwise orientation 
	(Figure \ref{sink disk obstruction} (b), where the solid lines are subarcs of $\alpha \times \{0\}$).
	As $\sigma \subseteq \alpha \times \{0\}$,
	$\sigma$ is positively transverse to $\mathcal{F}^{s} \times \{0\} \subseteq \Sigma \times \{0\}$.
	
	$\bullet$
	Assume $\sigma \subseteq \beta \times \{0\}$.
	In this case, 
	the cusp direction at $\sigma$ points to the right side and points into $D$.
	So $\sigma$ has the clockwise orientation 
	(Figure \ref{sink disk obstruction} (b), where the dashed lines are subarcs of 
	$\beta \times \{0\}$).
	Because $\varphi$ is co-orientation-reversing and
	$\beta$ is isotopic to $\varphi(\alpha)$ relative to the endpoints,
	$\beta$ is negatively transverse to $\mathcal{F}^{s}$.
	So
	$\sigma$ is negatively transverse to $\mathcal{F}^{s} \times \{0\}$.
	
	So every segment of $\partial D$ either
	has the anticlockwise orientation and 
	is positively transverse to $\mathcal{F}^{s} \times \{0\}$,
	or has the clockwise orientation and 
	is negatively transverse to $\mathcal{F}^{s} \times \{0\}$.
	Therefore,
	the anticlockwise orientation on $\partial D$ is positively transverse to 
	$\mathcal{F}^{s} \times \{0\}$
	(Figure \ref{sink disk obstruction} (c)).
	However,
	$\partial D$ is non-essential in $\Sigma$ since it bounds the disk $D$.
	This contradicts Lemma \ref{transverse}.
	So $B(\alpha)$ contains no sink disk.
	
	Similarly,
	if $B(\alpha)$ contains a half sink disk $D$,
	then the anticlockwise orientation on $\partial D$ 
	is positively transverse to $\mathcal{F}^{s} \times \{0\}$ at every segment contained in
	$(\alpha \times \{0\}) \cup (\beta \times \{0\})$ and
	is tangent to $\mathcal{F}^{s} \times \{0\}$ at $\partial D \cap (\partial \Sigma \times \{0\})$.
	This also contradicts Lemma \ref{transverse}.
	So $B(\alpha)$ also contains no half sink disk.
	It follows that
	$B(\alpha)$ is laminar.
\end{proof}

Thus

\begin{cor}\label{laminar branched surface}
$B(\alpha)$ is a laminar branched surface.
\end{cor}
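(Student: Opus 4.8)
The statement is essentially a bookkeeping consequence of Subsections \ref{subsection 3.1}--\ref{subsection 3.2}, so the plan is to verify, one by one, the three requirements in Definition \ref{laminar}: that $B(\alpha)$ satisfies Conditions (b)--(d) of Definition \ref{essential}, that it contains no trivial bubble, and that it contains no sink disk or half sink disk.

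For the no-trivial-bubble condition I would argue that it suffices to show $B(\alpha)$ has no $3$-ball complementary region; since the complementary pieces of $B(\alpha)$ in $M$ are $I$-products over the complementary pieces of $\alpha$ in $\Sigma$, a $3$-ball complementary region would force $\Sigma \setminus \setminus \alpha$ to contain a disk component, which Lemma \ref{alpha} forbids --- this is Corollary \ref{no trivial bubble}. For Conditions (b)--(d) of Definition \ref{essential} I would appeal to \hyperref[S]{[S, Lemma 3.16]}, whose hypotheses (each component of $\alpha$ essential in $\Sigma$, and $\Sigma \setminus \setminus \alpha$ with no disk component) have already been established; the only wrinkle is that \hyperref[S]{[S, Lemma 3.16]} assumes the upper and lower arcs of the product disks meet efficiently, which I would circumvent exactly as in the Remark following that reference --- isotope the product disks of $B(\alpha)$ to a branched surface $B'(\alpha)$ with efficient upper/lower arcs, and observe that $B(\alpha)$ and $B'(\alpha)$ have homeomorphic complementary regions (with matching horizontal and vertical boundary), so that the complement-only Conditions (b), (c) and the complement-based portion of the argument for (d) transfer back to $B(\alpha)$.

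The substantive input is the absence of sink disks and half sink disks, i.e. Proposition \ref{sink disk free}: product disks are never (half) sink disks because the cusps at their upper and lower arcs point outward, while a putative (half) sink disk $D \subseteq \Sigma \times \{0\}$ has $\partial D$ made of arcs of $\alpha \times \{0\}$ (positively transverse to $\mathcal{F}^{s}$) and arcs of $\beta \times \{0\}$ (negatively transverse to $\mathcal{F}^{s}$, since $\beta$ is isotopic to $\varphi(\alpha)$ and $\varphi$ reverses the co-orientation), and the cusp-direction constraints then force the anticlockwise orientation on $\partial D$ to be positively transverse to $\mathcal{F}^{s}$ --- or positively transverse along $(\alpha \cup \beta) \times \{0\}$ and tangent along $\partial \Sigma$ in the half sink disk case --- contradicting Lemma \ref{transverse} since $\partial D$ bounds a disk. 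Assembling the three verifications through Definition \ref{laminar} finishes the proof. I expect this last step to be the main obstacle: it is the only place the hypothesis that $\varphi$ is co-orientation-reversing genuinely enters (under co-orientation-\emph{preserving} monodromy the arcs of $\beta \times \{0\}$ would be positively transverse and the contradiction would collapse), which is ultimately why Theorem \ref{main} yields the restricted multi-interval rather than the full punctured slope range of Theorem \ref{co-orientation-preserving}.
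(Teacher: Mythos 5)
Your proposal is correct and follows essentially the same route as the paper: it assembles Corollary \ref{no trivial bubble} (via Lemma \ref{alpha}), the appeal to \hyperref[S]{[S, Lemma 3.16]} with the efficiency issue handled by isotoping to $B'(\alpha)$ and comparing complementary regions, and Proposition \ref{sink disk free} (via Lemma \ref{transverse} and the co-orientation-reversing hypothesis), exactly as the paper does before invoking Definition \ref{laminar}. No gaps to report.
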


\subsection{Simple closed curves carried by boundary train tracks}\label{subsection 3.3}

Let $\tau(\alpha) = B(\alpha) \cap \partial M$.
In this subsection,
we choose some simple closed curves carried by $\tau(\alpha)$ and compute their slopes.
We first give some descriptions for $\tau(\alpha)$.

\begin{defn}\rm\label{vertical edges}
	Let $\gamma: I \to \partial \Sigma$ be a component of $\alpha$.
	Then the product disk $\gamma \times I$ intersects $\partial M$ at
	$(\{\gamma(0)\} \times I) \cup (\{\gamma(1)\} \times I)$.
	We call $\{\gamma(0)\} \times I$ (resp. $\{\gamma(1)\} \times I$)
	a \emph{positive vertical edge} (resp. \emph{negative vertical edge}) of $\tau(\alpha)$.
\end{defn}

Under the assumption of Definition \ref{vertical edges},
let $C_1, C_2$ denote the boundary components of $\Sigma$ that contain $\gamma(0), \gamma(1)$ respectively.
Recall from  Convention \ref{orientation} (b) and Definition \ref{B(alpha)},

$\bullet$
The positive orientation on $C_1$ goes from the left side of $\gamma$ to its right side at $\gamma(0)$.

$\bullet$
The positive orientation on $C_2$ goes from the right side of $\gamma$ to its left side at $\gamma(1)$.

$\bullet$
The cusp directions at the lower arc of $\gamma \times I$ point to
its left side in $\Sigma \times \{0\}$.

$\bullet$
The cusp directions at the upper arc of $\gamma \times I$ point to
its right side in $\Sigma \times \{0\}$.

See Figure \ref{type} (a) for an illustration.
Thus we have

\begin{figure}\label{type}
	\centering
	\subfigure[]{
		\includegraphics[width=0.45\textwidth]{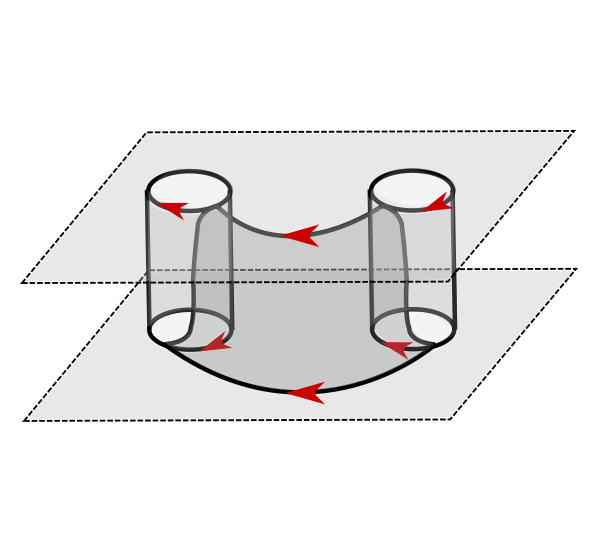}}
	\subfigure[]{
		\includegraphics[width=0.45\textwidth]{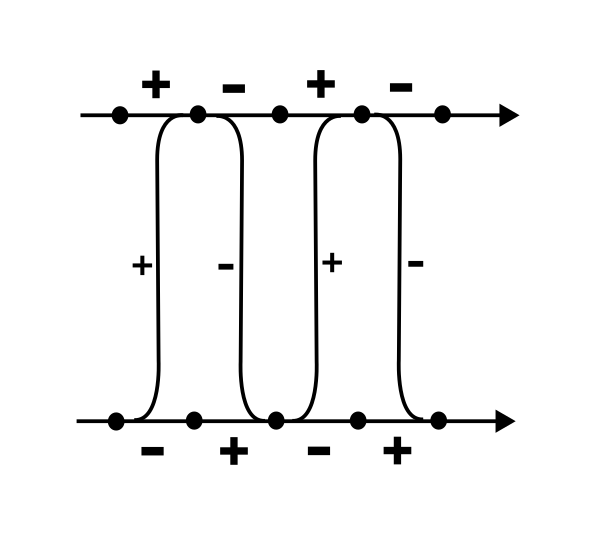}}
	\caption{(a) is a local picture of $B(\alpha)$ near a product disk $\gamma \times I$.
		It intersects $\partial M$ at two vertical segments,
		where the right one is a positive vertical edge and the left one is a negative vertical edge.
		The boundary components of $\partial \Sigma \times \{0\}$ are labeled with the positive orientations,
		and the upper and lower arcs of $\gamma \times I$ 
		are labeled with the orientations induced from $\gamma$.
		(b) is a local picture of $\tau(\alpha)$ near some vertical edges, 
		seen from the outside of $M$.
		The horizontal edges are contained in $\partial \Sigma \times \{0\}$, 
		and their positive orientations are toward the right side.
	The horizontal segments labeled with $-, +$ are negative stable segments,
	positive stable segments respectively.
    Similarly, the vertical segments labeled with $-, +$ are negative vertical edges,
positive vertical edges respectively.}
\end{figure}

\begin{fact}\rm
	(a)
	For a positive vertical edge $\{a\} \times I$ ($a \in \partial \Sigma$) of $\tau(\alpha)$,
	the cusp direction at the point $(a,0)$ is consistent with the negative orientation on
	$\partial \Sigma \times \{0\}$,
	and the cusp direction at $(a,1) = (\varphi(a), 0)$ is consistent with the positive orientation on
	$\partial \Sigma \times \{0\}$.
	For example,
	see the right one of the two vertical edges in Figure \ref{type} (a).
	
	(b)
	For a negative vertical edge $\{b\} \times I$ ($b \in \partial \Sigma$) of $\tau(\alpha)$,
	the cusp direction at $(b,0)$ is consistent with the positive orientation on
	$\partial \Sigma \times \{0\}$,
	and the cusp direction at $(b,1) = (\varphi(b), 0)$ is consistent with the negative orientation on
	$\partial \Sigma \times \{0\}$.
	For example, see the left one of the two vertical edges in Figure \ref{type} (a).
\end{fact}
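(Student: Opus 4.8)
The plan is to unwind the cusp structure on $B(\alpha)$ from Definition \ref{B(alpha)} against the orientation conventions of Convention \ref{orientation}, so the argument is pure bookkeeping. First I would observe that, by Definition \ref{vertical edges}, any positive vertical edge of $\tau(\alpha)$ has the form $\{\gamma(0)\} \times I$ and any negative vertical edge has the form $\{\gamma(1)\} \times I$, where $\gamma: I \to \Sigma$ is the component of $\alpha$ whose product disk $\gamma \times I$ contains the edge. Hence it is enough to identify the cusp directions at the four points $(\gamma(0),0)$, $(\gamma(0),1)$, $(\gamma(1),0)$, $(\gamma(1),1)$, where the two points at level $1$ are read off in $\Sigma \times \{0\}$ via the identification $(x,1) \sim (\varphi(x),0)$.

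For part (a), the point $(\gamma(0),0)$ lies on the lower arc $\gamma \times \{0\}$, where by Definition \ref{B(alpha)} the cusp direction points to the left side of $\gamma$ in $\Sigma \times \{0\}$; by Convention \ref{orientation}(b) a right-pointing normal of $\gamma$ at its starting endpoint $\gamma(0)$ agrees with the positive orientation of $\partial\Sigma$, so a left-pointing normal agrees with the negative orientation, which is the first claim of (a). The point $(\gamma(0),1)$ is identified with $(\varphi(\gamma(0)),0)$, the starting endpoint of the arc $\varphi(\gamma)$; since $\varphi$ is orientation-preserving, if we orient $\varphi(\gamma)$ by the push-forward of the orientation on $\gamma$ then its left and right sides, as well as its starting and ending endpoints, are the $\varphi$-images of those of $\gamma$. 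The cusp direction at the upper arc $\gamma \times \{1\} = \varphi(\gamma) \times \{0\}$ points to the right side of $\varphi(\gamma)$, and by Convention \ref{orientation}(b) applied to $\varphi(\gamma)$ at its starting endpoint this agrees with the positive orientation of $\partial\Sigma \times \{0\}$, giving the second claim of (a).

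Part (b) is proved the same way with $\gamma(0)$ replaced by the ending endpoint $\gamma(1)$: by Convention \ref{orientation}(b), at an ending endpoint a right-pointing normal agrees with the negative orientation of $\partial\Sigma$ and a left-pointing normal with the positive orientation. Hence the cusp direction at $(\gamma(1),0)$, which lies on the lower arc and points left, is consistent with the positive orientation of $\partial\Sigma \times \{0\}$, and at $(\gamma(1),1) = (\varphi(\gamma(1)),0)$, which lies on the upper arc $\varphi(\gamma) \times \{0\}$ and points right, it is consistent with the negative orientation. There is no genuine obstacle here; the only point requiring care is that the top of a vertical edge is identified with the \emph{starting} (resp. \emph{ending}) endpoint of $\varphi(\gamma)$ rather than of $\gamma$, so one must apply Convention \ref{orientation}(b) to $\varphi(\gamma)$, which is legitimate because $\varphi$ preserves orientation and hence the left/right dichotomy. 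Figure \ref{type}(a) records the resulting local picture and serves as a check.
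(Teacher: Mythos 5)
Your proposal is correct and follows essentially the same route as the paper: it reads off the cusp directions on the lower and upper arcs from Definition \ref{B(alpha)} and translates them into boundary orientations at the four endpoints via Convention \ref{orientation}(b), applied to $\gamma$ at level $0$ and to the push-forward arc $\varphi(\gamma)$ at level $1$. The only difference is that you make explicit the (harmless) step of applying the convention to $\varphi(\gamma)$, which the paper leaves implicit.
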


For a positive or negative vertical edge $\{a\} \times I$ (where $a \in \partial \Sigma$),
we call $(a,0)$ its \emph{lower endpoint} and
call $(a,1) = (\varphi(a),0)$ its \emph{upper endpoint}.
Since $\varphi$ is co-orientation-reversing,
$\varphi$ takes all positive stable segments to negative stable segments and
takes all negative stable segments to positive stable segments.
Thus, 
for a positive vertical arc (resp. negative vertical arc),
its lower endpoint is contained in a negative stable segment (resp. positive stable segment) and
its upper endpoint is contained in a positive stable segment (resp. negative stable segment),
compare with Figure \ref{type} (b).

Let $C$ be a boundary component of $\Sigma$ and 
let $c$ denote the order of $C$ under $\varphi$ (i.e. $c = \min \{k \in \mathbb{N}_+ \mid \varphi^{k}(C) = C\}$).
Let $T$ denote the boundary component of $M$ containing $C \times \{0\}$ and let
$\tau = \tau(\alpha) \cap T$.
Let $p$ denote the number of singularities of $\mathcal{F}^{s}$ contained in $C$,
and let $v_1,\ldots,v_p$ denote the $p$ singularities of $\mathcal{F}^{s}$ contained in $C$
(consecutive along the positive orientation on $C$).
Let $q \in \mathbb{Z}$ for which $(p;q)$ is the degeneracy locus of the suspension flow of $\varphi$ on $T$.
As explained in Remark \ref{remark} (a),
$v_{j+q} = \varphi^{c}(v_j)$ (mod $p$) for each $j \in \{1,\ldots,p\}$.
We note that $2 \mid p$ since $\varphi$ is co-orientable,
and $q \equiv c \text{ } (\text{mod } 2)$ since 
$2 \mid q$ if and only if $\varphi^{c}$ is co-orientation-preserving.

\begin{defn}\rm\label{varphi triple}
	Under the assumption as above,
	we call $(c,p,q)$ the \emph{$\varphi$-triple} for $C$.
\end{defn}

Note that the triple $(c,p,q)$ depends only on the boundary component $T \subseteq \partial M$ containing $C$.

\begin{prop}\label{maximal slope}
	$\tau$ carries a simple closed curve of slope $\frac{p}{q+c}$ and
	a simple closed curve of slope $\frac{p}{q-c}$,
	where 
	$\frac{p}{q + c} = \infty$ if $q + c = 0$ and 
	$\frac{p}{q - c} = \infty$ if $q - c = 0$.
\end{prop}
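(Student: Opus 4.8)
The plan is to exhibit the two simple closed curves explicitly as curves carried by the boundary train track $\tau$, and then compute their slopes using the intersection-number recipe of Convention \ref{slope} (b). Recall that $\tau = \tau(\alpha) \cap T$ is built from horizontal edges lying in $C \times \{0\}$ (the stable segments, alternating positive and negative, with the horizontal orientation toward the ``right'' as in Figure \ref{type}(b)) and the vertical edges (positive and negative), where by Fact \ref{endpoint} each negative stable segment carries exactly one starting endpoint of $\alpha$ and each positive stable segment exactly one ending endpoint. So on $T$ there are exactly $p/2$ positive vertical edges and $p/2$ negative vertical edges, and the combinatorics of how their upper/lower endpoints sit in the stable segments is governed by the relation $v_{j+q} = \varphi^c(v_j) \pmod p$ from Remark \ref{remark}(a).

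First I would set up a normal-form description of a curve carried by $\tau$: such a curve is determined by how many times it runs along each horizontal edge and each vertical edge, subject to the switch (branching) conditions at the singular points $v_1,\ldots,v_p$ and at the endpoints of the vertical edges. I would then construct a specific curve $\gamma_+$ that uses each positive vertical edge once, travels monotonically along $C \times \{0\}$ in the positive direction between consecutive uses, and closes up after traversing all $p/2$ vertical edges; because going from $v_j$ to the ``next'' relevant singular point corresponds to adding $q$ (via $\varphi^c$) combined with the $\pm 1$ shift coming from whether we enter at the lower or the upper endpoint of a vertical edge, the total wrap of $\gamma_+$ around $C$ (i.e. along the longitude $\lambda_T$) will be $q+c$ while its wrap in the meridional direction $\mu_T$ (counted by how many vertical edges, hence fiber-transverse segments, it uses) will be $p$. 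Dually, building $\gamma_-$ out of the negative vertical edges reverses the sign of the $\pm 1$ shift, yielding longitudinal wrap $q - c$ and meridional wrap $p$. By Convention \ref{slope}(b) the slopes are then $\langle \gamma_\pm, \lambda_T\rangle / \langle \mu_T, \gamma_\pm\rangle = p/(q\pm c)$, with the obvious convention that the slope is $\infty$ when $q\pm c = 0$. I would double-check orientation conventions against Convention \ref{orientation} (the longitude oriented with the positive boundary orientation, the meridian oriented by the increasing $I$-coordinate) so that the signs of $q\pm c$ come out as stated rather than negated.

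The main obstacle I expect is the bookkeeping at the vertical edges: one must verify that the curve one writes down actually satisfies the switch conditions of $\tau$ (so that it is genuinely carried, not merely immersed), and that after smoothing it is a single embedded simple closed curve rather than a multicurve. This requires tracking, for each vertical edge, whether its lower endpoint lies in a negative stable segment and its upper endpoint in a positive one (for positive vertical edges) or vice versa, and checking that the cyclic permutation $j \mapsto j + q \pmod p$ together with the parity fact $q \equiv c \pmod 2$ makes the proposed closed-up path hit each chosen vertical edge exactly once and return to its start. The parity condition $q \equiv c \pmod 2$ is exactly what guarantees the relevant orbit under the shift has the right length, so connectedness of $\gamma_\pm$ should follow from a $\gcd$ computation; I would isolate this in a short combinatorial lemma about the permutation on $\{1,\ldots,p\}$ and then feed it back into the slope count.
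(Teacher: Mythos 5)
Your overall strategy is the same as the paper's: follow the boundary train track, using the positive vertical edges together with the positive direction on $T \cap (\Sigma \times \{0\})$ for one curve and the negative vertical edges with the negative direction for the other, and read off the slope from $\langle \cdot, \lambda_T\rangle$ and $\langle \mu_T, \cdot\rangle$. However, the specific curve you propose does not exist, and the step you delegate to a ``$\gcd$ computation'' is exactly where the argument breaks. You require $\gamma_+$ to traverse every positive vertical edge exactly once and to close up into a single embedded curve with wraps $p$ and $q+c$. But since $\varphi$ is co-orientable and co-orientation-reversing, $p$ is even and $q \equiv c \pmod{2}$, so $q+c$ is even and $\gcd(p,q+c) \geqslant 2$; hence $(p,q+c)$ is never a primitive class in $H_1(T)$ and no simple closed curve on the torus has those intersection numbers. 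For the same reason the parity condition does not guarantee the orbit length you need: the shift $j \mapsto j+(q+c)$ on $\mathbb{Z}/p$ has orbits of length $p/\gcd(p,q+c)$, which can be strictly smaller than the number of positive vertical edges (e.g. $p=8$, $q+c=4$), so ``hit each chosen vertical edge exactly once'' and ``connected'' are incompatible in general. Your counts are also off when $c>1$: the torus $T$ contains $cp/2$ (not $p/2$) positive vertical edges, distributed over the $c$ circles of $T\cap(\Sigma\times\{0\})$, and between consecutive vertical edges the curve runs along $\varphi^{j}(C)\times\{0\}$ for varying $j$, not only along $C\times\{0\}$.

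The repair is what the paper does: start at a singular point $t_1$ of $C\times\{0\}$ and simply follow the track (positive horizontal direction, ascending each positive vertical edge encountered) until the \emph{first} return to $t_1$, making no claim about exhausting the vertical edges. Each return to $C\times\{0\}$ advances the index by $q+c$, so the resulting embedded closed curve satisfies $\langle \gamma, \lambda_T\rangle = p/\gcd(p,q+c)$ and $\langle \mu_T,\gamma\rangle = (q+c)/\gcd(p,q+c)$; the $\gcd$ factor cancels in the ratio and the slope is $\frac{p}{q+c}$, with no connectedness lemma needed beyond the fact that a first-return path is embedded. The analogous first-return curve built from the negative vertical edges and the negative horizontal direction gives slope $\frac{p}{q-c}$. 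So your plan is salvageable and, once corrected in this way, coincides with the paper's proof; as written, though, the connectedness claim and the wrap counts $(p, q\pm c)$ are genuinely false.
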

\begin{proof}
Let $\mu_T, \lambda_T$ denote the meridian and longitude on $T$
(see Convention \ref{orientation} for the orientations on them).
We may assume that every $[v_{2i-1}, v_{2i}]$ is a negative stable segment and 
every $[v_{2i}, v_{2i+1}]$ is a positive stable segment.
Let $t_j = (v_j,0) \in \Sigma \times \{0\}$ for each $j \in \{1,\ldots,p\}$.

	Let $\gamma: I \to \tau$ be the path that 
	starts at $t_1$ and repeatedly performs the following steps,
	(1)
	once $\gamma$ reaches $T \cap (\Sigma \times \{0\})$,
	$\gamma$ goes along the positive orientation on $T \cap (\Sigma \times \{0\})$ until it meets 
	the lower endpoint of some positive vertical edge,
	(2)
	when $\gamma$ meets the lower endpoint of some positive vertical edge,
	$\gamma$ goes along this positive vertical edge and reaches $T \cap (\Sigma \times \{0\})$ again,
	(3)
	$\gamma$ stops by $t_1$ at its second time to meet $t_1$.
	Compare with Figure \ref{boundary train track} (a) for the picture when $\gamma$ starts.
	
	To compute the slope of $\gamma$,
	we first find which stable segment $\gamma$ reaches at its second time to intersect $C \times \{0\}$.
	Let $\gamma_0$ be the subpath of $\gamma$ that starts at $t_1$ and stops at the second time it meets $C \times \{0\}$.
	We refer to each endpoint of every stable segment as a \emph{dot}
	(the dots are represented as dotted points in Figure \ref{boundary train track}).
	Since $c = |T \cap (\Sigma \times \{0\})|$, 
	the subpath $\gamma_0$ undergoes the steps (1), (2) exactly $c$ times,
	passing through $c-1$ dots in total. 
	Since $\varphi^{c}(v_1) = v_{q+1}$,
	it follows that the dot $(v_1,1) \in T$ is identified with 
	$(v_{q+1},0) = t_{q+1}$ in $M$.
	Therefore, if we start at $t_{q+1} = (v_1,1)$, move along the positive orientation on $C \times \{0\}$,
	and pass through $c-1$ dots, we arrive at the stable segment where $\gamma_0$ terminates. 
	Thus, the subpath $\gamma_0$ stops at some point in the positive stable segment $[t_{q+c}, t_{q+c+1}]$ (mod $p$).  
	
	If $t_{q+c+1} = t_1$,
	then after reaching the endpoint of $\gamma_0$,
	the path $\gamma$ goes along the positive orientation on $[t_p,t_1]$ and stops at $t_1$.
	Otherwise,
	$\gamma$ continues repeating the steps (1) and (2) as described above,
	passing through $t_{2(q+c)+1}, t_{3(q+c)+1}, \ldots$,
	until it arrives at $t_1$ at last.
	Because \[\min\{k \in \mathbb{Z}_{\geqslant 1} \mid k(q+c) +1 \equiv 1 \text{ }(\text{mod } p)\} = \frac{p}{\gcd (p,q+c)},\]
	by convention $\gcd(p,0) = p$,
	the path $\gamma$ passes $t_{k(q+c)+1}$ for all $2 \leqslant k \leqslant \frac{p}{\gcd (p,q+c)} - 1$
	and arrives at $t_1$ 
	the $(\frac{p}{\gcd (p,q+c)})^{\text{th}}$ time it meets $C \times \{0\}$.
	It follows that
	$$\langle \gamma, \lambda_T \rangle = \frac{p}{\gcd (p,q+c)}.$$

	The meridian on $T$ is represented by the path that starts at $t_1$,  
	travels along $\{v_1\} \times I$ to $(v_1,1) = (v_{q+1},0) = t_{q+1}$,  
	and then moves from $t_{q+1}$ to $t_1$ along the negative (resp. positive) orientation on $C \times \{0\}$ 
	when $q > 0$ (resp. $q < 0$).  
	Thus, as $\gamma_0$ moves from $t_1$ to \( t_{q+c+1} \),  
	it can be viewed as a meridian path with a \( \frac{q+c}{p} \) twist along  
	the positive orientation on the longitude
	(if \( q+c < 0 \), this corresponds to a \( -\frac{q+c}{p} \) twist along the negative orientation).  
	Similarly, for each subpath of \( \gamma \) from \( t_{k(q+c)+1} \) to \( t_{(k+1)(q+c)+1} \)  
	(where \( 1 \leqslant k \leqslant \frac{p}{\gcd (p,q+c)} - 1 \)),  
	there is a \( \frac{q+c}{p} \) twist along the positive orientation on the longitude.
	Hence
	$$\langle \mu_T, \gamma \rangle = 
	\frac{q+c}{p} \cdot \langle \gamma, \lambda_T \rangle =
	\frac{q+c}{\gcd (p,q+c)}.$$
	Therefore,
	$$\text{slope}(\gamma) = \frac{\langle \gamma, \lambda_T \rangle}{\langle \mu_T, \gamma \rangle} = 
	\frac{(\frac{p}{\gcd (p,q+c)})}{(\frac{q+c}{\gcd (p,q+c)})} = \frac{p}{q+c}.$$
	
	\begin{figure}\label{boundary train track}
		\centering
		\subfigure[]{
			\includegraphics[width=0.32\textwidth]{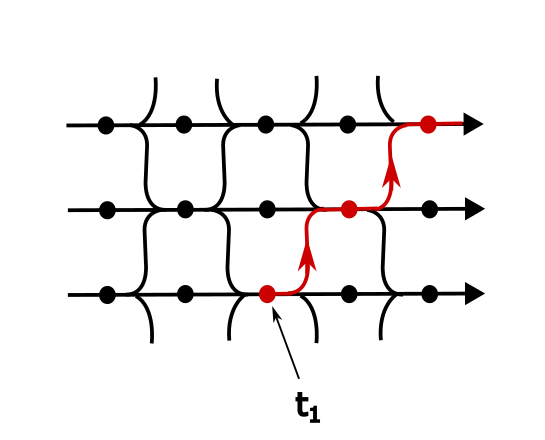}}
		\subfigure[]{
			\includegraphics[width=0.32\textwidth]{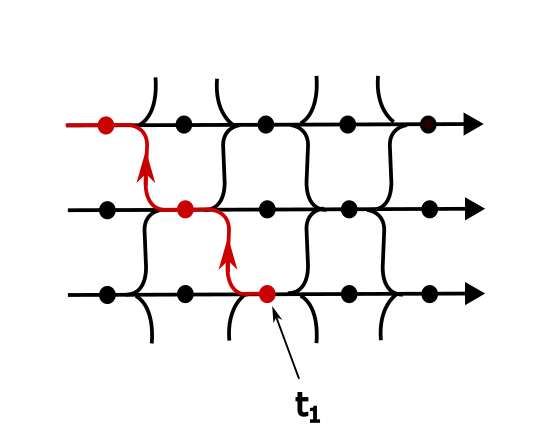}}
		\caption{The horizontal edges are contained in $\partial \Sigma \times \{0\}$,
			where the positive orientations on them are toward the right.
			In the proof of Proposition \ref{maximal slope},
			we construct two paths $\gamma, \nu: I \to \tau$ with $\text{slope}(\gamma) = \frac{p}{q+c}$,
			$\text{slope}(\nu) = \frac{p}{q-c}$.
			(a) describes $\gamma$ when it starts,
			and (b) describes $\nu$ when it starts.}
	\end{figure}
	
	Next,
	we choose a simple closed curve $\nu$ carried by $\tau$ with $\text{slope}(\nu) = \frac{p}{q-c}$.
	Let $\nu: I \to \tau$ be the path starting at $t_1$ such that
	(1)
	when $\nu$ gets to $T \cap (\Sigma \times \{0\})$,
	$\nu$ goes along the negative orientation on $T \cap (\Sigma \times \{0\})$ until it meets 
	the lower endpoint of some negative vertical edge,
	(2)
	when $\nu$ meets the lower endpoint of some negative vertical edge,
	$\nu$ goes along this negative vertical edge and gets to $T \cap (\Sigma \times \{0\})$ again,
	(3)
	$\nu$ stops by $t_1$ at the second time to meet $t_1$.
	Compare with Figure \ref{boundary train track} (b) for the picture when $\nu$ starts.
	
	Next, we describe where $\nu$ reaches $C \times \{0\}$ for the second time.
	Let $\nu_0$ be the subpath of $\nu$ that starts at $t_1$ and stops at the second time to meet $C \times \{0\}$.
	As in the case of $\gamma_0$ discussed above,  
	$\nu_0$ undergoes the steps (1), (2) exactly $c$ times and passes through $c-1$ dots in total.
	Thus,
	the stable segment where \(\nu_0\) terminates can be reached by
	going from $t_{q+1}$, moving along the negative orientation on $C \times \{0\}$ and passing through $c-1$ dots.
	It follows that \(\nu_0\) stops in the positive stable segment $[t_{q-c+1}, t_{q-c+2}]$ (mod $p$).
	
	After reaching the positive stable segment $[t_{q-c+1}, t_{q-c+2}]$ (mod $p$)
	at its second time to meet $C \times \{0\}$,
	the path $\nu$ continues along the negative orientation on $C \times \{0\}$ toward $t_{q-c+1}$.
	If $t_{q-c+1} = t_1$,
	then $\nu$ stops at this time.
	Otherwise,
	$\nu$ repeats the above steps,
	passing through $t_{2(q-c)+1}, t_{3(q-c)+1}, \ldots$,
	and arrives at $t_1$ at last.
	
	Similar to the case of $\gamma$,
	the path $\nu$ passes $t_{k(q-c)+1}$ for all $2 \leqslant k \leqslant \frac{p}{\gcd (p,q-c)} - 1$ (by convention, $\gcd(p,0) = p$)
	and arrives at $t_1$ the $(\frac{p}{\gcd (p,q-c)})^{\text{th}}$ time it meets $C \times \{0\}$.
	Thus
    	$$\langle \nu, \lambda_T \rangle = \frac{p}{\gcd (p,q-c)}.$$
    As $\nu_0$ moves from $t_{k(q-c)+1}$ to \( t_{(k+1)(q-c)+1} \) ($2 \leqslant k \leqslant \frac{p}{\gcd (p,q-c)} - 1$),  
    it can be regarded as a meridian path with a \( \frac{q-c}{p} \) twist along  
    the positive orientation of the longitude,
    and therefore
    	$$\langle \mu_T, \nu \rangle = 
    	\frac{q-c}{p} \cdot \langle \nu, \lambda_T \rangle =
    	\frac{q-c}{\gcd (p,q-c)}.$$
    It follows that
    	$$\text{slope}(\nu) = \frac{\langle \nu, \lambda_T \rangle}{\langle \mu_T, \nu\rangle} = 
    	\frac{(\frac{p}{\gcd (p,q-c)})}{(\frac{q-c}{\gcd (p,q-c)})} = \frac{p}{q-c}.$$
\end{proof}

\subsection{Slopes realized by the boundary train tracks}\label{subsection 3.4}

We first briefly review some ingredients for measures on train tracks.
Let $\tau$ be a train track on a compact orientable surface $S$.
We denote by $E(\tau)$ the set of edges of $\tau$.

$\bullet$
A \emph{measure} $m: E(\tau) \to \mathbb{R}_{\geqslant 0}$ on $\tau$ is
an assignment of nonnegative numbers to $E(\tau)$ that satisfies the cusp relation
(Figure \ref{cusp relation} (a)) at each cusp,
i.e. for any three edges $x,y,z \in E(\tau)$ that has a common endpoint $p$,
if the cusp direction at $p$ points toward $z$,
then $m(x) + m(y) = m(z)$.

\begin{figure}\label{cusp relation}
	\centering
	\subfigure[]{
		\includegraphics[width=0.32\textwidth]{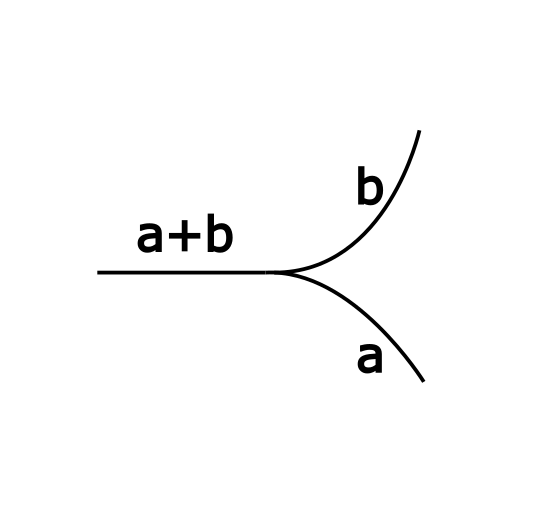}}
	\subfigure[]{
		\includegraphics[width=0.32\textwidth]{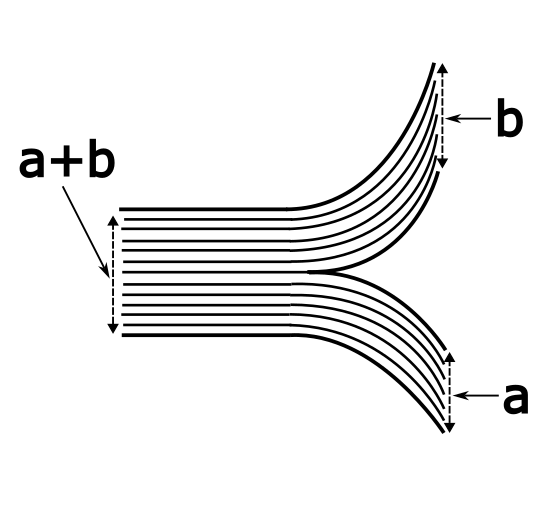}}
	\caption{(a) The cusp relation of a measure at a cusp point.
	(b) The companion lamination of the measure as given in (a).}
\end{figure}

$\bullet$
Measures on $\tau$ are in one-to-one correspondence with
measured laminations carried by $\tau$.
For a measure $m$ on $\tau$,
call the corresponding measured lamination the \emph{companion lamination} of $m$
(compare with Figure \ref{cusp relation} (b)).

$\bullet$
$\tau$ is \emph{orientable} if its edges have continuously varying orientations.
Assume that $\tau$ is orientable,
and let $\Lambda$ be a measured lamination carried by $\tau$.
Then $\Lambda$ is orientable.
Fix an orientation on $\tau$,
then $\Lambda$ has an \emph{induced orientation} from $\tau$ such that 
each leaf of $\Lambda$ is oriented consistently with the orientation on $\tau$.
Moreover,
any closed oriented curve (on the surface $S$) has
a well-defined algebraic intersection number with $\Lambda$.

We refer the reader to \hyperref[FM]{[FM, Chapter 15]} for more details.

\begin{notation}\rm
	Let $\tau$ be a train track on a surface $S$.
	
	(a)
	Let $m_1, m_2$ be two measures on $\tau$.
	Then
	$m_1 + m_2$ denotes the measure on $\tau$ with
	$(m_1 + m_2)(e) = m_1(e) + m_2(e)$ for each $e \in E(\tau)$.
	
	(b)
	Let $\rho$ be a simple closed curve carried by $\tau$ and
	let $t \in \mathbb{R}_+$.
	Then we can regard $\rho \times [0,t]$ as a measured lamination carried by $\tau$.
	Let $\rho(t)$ denote the measure on $\tau$ for which
	$\rho \times [0,t]$ is the companion measured lamination of $\rho(t)$.
\end{notation}

\begin{prop}\label{realize}
	Let $C$ be a component of $\partial \Sigma$ and
	let $T$ be a boundary component of $M$ containing $C \times \{0\}$.
	Let $\tau = \tau(\alpha) \cap T$.
	Let $(c,p,q)$ denote the $\varphi$-triple for $C$ (Definition \ref{varphi triple}).
	
	(a)
	If $q > c > 0$,
	then $\tau$ realizes all rational slopes in 
	$(-\infty,\frac{p}{q+c}) \cup (\frac{p}{q-c}, +\infty) \cup \{\infty\}$.
	
	(b)
	If $q = c > 0$,
	then $\tau$ realizes all rational slopes in 
	$(-\infty,\frac{p}{2q})$.
	
	(c)
	If $c > q \geqslant 0$,
	then $\tau$ realizes all rational slopes in $(-\frac{p}{c-q},\frac{p}{q+c})$.
	
	(d)
	If $-c < q < 0$,
	then $\tau$ realizes all rational slopes in $(-\frac{p}{|q|+c},\frac{p}{c-|q|})$.
	
	(e)
	If $q = -c < 0$,
	then $\tau$ realizes all rational slopes in $(-\frac{p}{2|q|},+\infty)$.
	
	(f)
	If $q < -c < 0$,
	then $\tau$ realizes all rational slopes in $(-\infty, -\frac{p}{|q|-c}) \cup (-\frac{p}{|q|+c},+\infty) \cup \{\infty\}$.
\end{prop}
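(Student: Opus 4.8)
The plan is to exhibit, for each rational slope $s$ in the claimed interval, an explicit measured lamination carried by $\tau$ whose companion simple closed curves have slope $s$. The starting point is Proposition \ref{maximal slope}, which supplies two simple closed curves $\gamma, \nu$ carried by $\tau$ of slopes $\frac{p}{q+c}$ and $\frac{p}{q-c}$ respectively. Since $\tau$ is an oriented train track (Subsection \ref{subsection 3.4}), any nonnegative rational combination of these two curves is realized by $\tau$ as the companion lamination of a measure $\gamma(a) + \nu(b)$ for $a,b \in \mathbb{Q}_{\geqslant 0}$; the key computation will be to identify, using the algebraic intersection numbers with $\mu_T$ and $\lambda_T$ recorded in the proof of Proposition \ref{maximal slope}, the slope of such a combination as one varies $(a,b)$ over the positive quadrant. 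Because slope is the ratio $\langle \cdot, \lambda_T\rangle / \langle \mu_T, \cdot\rangle$ and both numerator and denominator are linear in $(a,b)$, the set of achievable slopes is exactly the open arc in $\mathbb{R}P^1$ swept out from $\mathrm{slope}(\gamma)$ to $\mathrm{slope}(\nu)$ on the side determined by the common orientation — i.e. the arc \emph{not} containing the degeneracy slope $\frac{p}{q}$, cf.\ Remark \ref{remark}(c). The six cases in the statement are just the bookkeeping needed to write this arc as a union of ordinary intervals in $\mathbb{R} \cup \{\infty\}$ according to the signs of $q+c$, $q-c$, and $q$.

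More concretely, I would first fix notation: write $\ell_\gamma = \langle \gamma, \lambda_T\rangle$, $m_\gamma = \langle \mu_T, \gamma\rangle$ and similarly for $\nu$, so that from Proposition \ref{maximal slope} one has $(\ell_\gamma : m_\gamma) = (p : q+c)$ and $(\ell_\nu : m_\nu) = (p : q-c)$ in $\mathbb{R}P^1$. Then for $a, b \geqslant 0$ not both zero, the measure $\gamma(a)+\nu(b)$ has companion lamination a (multi)curve whose slope is $\frac{a\ell_\gamma + b\ell_\nu}{a m_\gamma + b m_\nu}$; choosing $a,b$ rational with $a m_\gamma + b m_\nu$ having the right gcd, or invoking that rational points are dense in the projective line segment, one gets every rational slope strictly between the two endpoints on the appropriate side. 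The orientation of $\tau$ is exactly what guarantees the measures add without cancellation, so the companion of $\gamma(a)+\nu(b)$ is never empty and its slope genuinely traces the projective segment rather than jumping across. The endpoint $\infty$ is included precisely when one of $q \pm c$ vanishes, which is why it appears in cases (a) and (f) but not the others.

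The second task is to verify that the open arc between $\frac{p}{q+c}$ and $\frac{p}{q-c}$ not containing $\frac{p}{q}$ is the \emph{correct} one — that is, that the positive combinations $\gamma(a)+\nu(b)$ sweep the side away from the degeneracy slope rather than toward it. Here I would argue that $\frac{p}{q}$ lies strictly between $\frac{p}{q+c}$ and $\frac{p}{q-c}$ (immediate since $q+c, q-c$ straddle $q$ and $p>0$), and that the convex combination $a(p) + b(p)$ over $a m_\gamma + b m_\nu$ for $a,b > 0$ is a point of $\mathbb{R}P^1$ on the arc that is "reached from both endpoints by turning the same way," namely the arc disjoint from $\frac{p}{q}$. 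The sign conventions from Convention \ref{orientation} and the explicit descriptions of the paths $\gamma, \nu$ in Figure \ref{boundary train track} fix the orientation so that this is the case. Once the side is pinned down, translating "the arc of $\mathbb{R}P^1$ between $\frac{p}{q+c}$ and $\frac{p}{q-c}$ omitting $\frac{p}{q}$" into the six explicit interval expressions is a routine case split on whether $q > c$, $q = c$, $0 \leqslant q < c$, $-c < q < 0$, $q = -c$, or $q < -c$, using $\frac{p}{0} = \infty$ and $\frac{p}{q \mp c} = \mp\frac{p}{c \mp q}$ when $q \mp c < 0$.

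The main obstacle I anticipate is not any single case but establishing cleanly, once and for all, that the arc traced by the positive measures is the one omitting the degeneracy slope — i.e.\ getting the orientation bookkeeping exactly right so that $\gamma$ and $\nu$ are "coherently oriented" relative to $\mu_T$ in the precise sense that makes $\gamma(a)+\nu(b)$ a genuine embedded multicurve with slope on the predicted side. This requires carefully tracking the cusp directions of $\tau$ (the positive/negative vertical edges and stable segments from Subsection \ref{subsection 3.3}) and confirming that both $\gamma$ and $\nu$, as oriented by the orientation of $\tau$, pair with $\lambda_T$ with the same sign; the algebraic intersection numbers in Proposition \ref{maximal slope} were written with implicit positive signs, and I would need to justify that. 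After that, the computation of slopes of positive combinations and the six-way case analysis are routine.
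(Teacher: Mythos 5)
Your core formula --- that the companion lamination of $\gamma(a)+\nu(b)$ has slope $\frac{a\ell_\gamma+b\ell_\nu}{a m_\gamma+b m_\nu}$ --- is valid only when all four pairings are computed with respect to a \emph{single} coherent orientation of $\tau$, and there the signs come out opposite to what you propose to verify. Under any coherent orientation of $\tau$ (say the one in which the fiber circles are positively oriented, positive vertical edges point up and negative ones down), $\gamma$ is traversed consistently with the orientation, but $\nu$ --- which runs along the fiber circles negatively and up the negative vertical edges --- is consistent only with the \emph{reversed} orientation; so with a common orientation the class of $\nu$ is the negative of the one computed in Proposition \ref{maximal slope}, and $\langle\gamma,\lambda_T\rangle$, $\langle\nu,\lambda_T\rangle$ have opposite signs (likewise the pairings with $\mu_T$). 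The check you single out as the main obstacle, that ``both $\gamma$ and $\nu$, as oriented by the orientation of $\tau$, pair with $\lambda_T$ with the same sign,'' is therefore false; worse, if it were true, your positive combinations would sweep exactly the interval $(\frac{p}{q+c},\frac{p}{q-c})$, i.e.\ the arc of $\mathbb{R}P^{1}$ \emph{containing} the degeneracy slope $\frac{p}{q}$, contradicting the conclusion you assert. The opposite-sign fact is precisely what pushes the swept arc through $0$ and $\infty$; the proposal can be repaired by replacing $(\ell_\nu,m_\nu)$ by its negative, but as written the stated mechanism and the stated conclusion are inconsistent at the one step that matters, and this is not routine bookkeeping.

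There is a second gap: ``realized'' in Theorem \ref{boundary}, as used in this paper, requires a measure that is positive on \emph{every} edge of $\tau$, so that $B(\alpha)$ fully carries the resulting lamination; this is the sole reason the paper's proof adds the slope-$0$ curves $\rho_e$ (resp.\ $\eta_e$) over all $e\in E(\tau)$. Your measures $\gamma(a)+\nu(b)$ do not achieve this in general: $\gamma$ uses no negative vertical edge, $\nu$ uses no positive vertical edge, and since $\gcd(p,q\pm c)\geqslant 2$ here each of them typically misses further vertical and horizontal edges as well. Once you add curves through the missing edges to restore positivity, their contributions enter the slope computation, and you are led essentially to the paper's actual argument, which never combines $\gamma$ with $\nu$: it orients $\tau$ one way and combines $\gamma$ with slope-$0$ curves through every edge to realize $[0,\frac{p}{q+c})$, then reverses the orientation and combines $\nu$ with such curves to realize $(-\infty,0)\cup\{\infty\}\cup(\frac{p}{q-c},+\infty)$.
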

\begin{proof}
	We only prove (a).
	The proofs of (b)$\sim$(f) are similar to (a).
	Let $\mu_T, \lambda_T$ denote the meridian and longitude on $T$.
	For every vertical edge $\{a\} \times I$ in $\tau$,
	its \emph{upward orientation} (resp. \emph{downward orientation}) refers to
	the orientation on it consistent with the increasing orientation (resp. decreasing orientation)
	on the second coordinate.
	
	Assume $q > c > 0$.
	To show that \(\tau\) realizes all rational slopes in  
	\(\left(-\infty,\frac{p}{q+c}\right) \cup \left(\frac{p}{q-c}, +\infty\right) \cup \{\infty\}\),  
	we divide the proof into two parts.  
	In the first part, we show that $\tau$ realizes all rational slopes in 
	$[0, \tfrac{p}{q+c})$,  
	and in the second part, we show that $\tau$ realizes all rational slopes in 
	$\left(-\infty, 0\right) \cup \{\infty\} \cup \left(\tfrac{p}{q-c}, +\infty\right)$.
	
	\begin{part 1*}\rm
	We first orient $\tau$ so that
	$T \cap (\Sigma \times \{0\})$ has the positive orientation,
	every positive vertical edge in $\tau$ has the upward orientation,
	and every negative vertical edge in $\tau$ has the downward orientation.
	
	By Proposition \ref{maximal slope} (a),
	$\tau$ carries a simple closed curve $\gamma$ of slope $\frac{p}{q+c}$.
	Let $u = \frac{p}{\gcd(p, q+c)}$,
	$v = \frac{q+c}{\gcd(p, q+c)}$.
	Then $\frac{p}{q+c} = \frac{u}{v}$, $u, v > 0$, and $\gcd(u,v) = 1$.
	We orient $\gamma$ with the orientation induced from $\tau$,
	then
	$\langle \gamma, \lambda_T \rangle = u$,
	$\langle \mu_T, \gamma \rangle = v$.
	
	For every edge $e$ of $\tau$, 
	we can choose 
	a simple closed curve $\rho_e$ carried by $\tau$ such that $\rho_e$ contains $e$ and
	$\text{slope}(\rho_e) = 0$.
	We orient each $\rho_e$ with the orientation induced from $\tau$.
	Then $\langle \rho_e, \lambda_T \rangle = 0$,
	$\langle \mu_T, \rho_e \rangle = 1$.
		
	Let $m_0$ denote the measure $\sum_{e \in E(\tau)} \rho_e(1)$,
	and let $\Lambda_0$ denote the companion lamination of $m_0$
	(with the orientation induced from $\tau$).
	Then
	$\langle \Lambda_0, \lambda_T\rangle = 0$,
	and thus $\tau$ realizes the slope $0$.

It remains to show that $\tau$ realizes all rational slopes in $(0,\tfrac{p}{q+c})$.
	We define a family of one-parameter measures $m_1(t)$ with $t \in (0,+\infty)$ such that
	$$m_1(t) = \gamma(1) + \sum_{e \in E(\tau)} \rho_e(t).$$
	Let $\Lambda_1(t)$ denote the companion lamination of $m_1(t)$,
	and we assign $\Lambda_1(t)$ the orientation induced from $\tau$.
	Let $N = |E(\tau)|$.
	Then
	$$\frac{\langle \Lambda_1(t), \lambda_T\rangle}{\langle \mu_T, \Lambda_1(t) \rangle} =
	\frac{u}{v+tN}.$$
	For any rational number $x \in (0,\frac{p}{q+c})$,
	we can choose some $t > 0$ so that
	$$\frac{u}{v+tN} = x,$$
	which implies that $\tau$ realizes the slope $x$.
\end{part 1*}
	
	\begin{part 2*}\rm
	Now we prove that $\tau$ realizes all rational slopes in $(-\infty, 0) \cup \{\infty\} \cup (\frac{p}{q-c}, +\infty)$.
	We re-orient $\tau$ so that
	$T \cap (\Sigma \times \{0\})$ has the negative orientation,
	every positive vertical edge in $\tau$ has the downward orientation,
	and every negative vertical edge in $\tau$ has the upward orientation.
	
	By Proposition \ref{maximal slope} (a),
	$\tau$ carries a simple closed curve $\nu$ of slope $\frac{p}{q-c}$.
	We orient $\nu$ with the orientation induced from $\tau$.
	Let $r= \frac{p}{\gcd(p, q-c)}$,
	$s = \frac{q-c}{\gcd(p, q-c)}$.
	Then $\frac{p}{q-c} = \frac{r}{s}$, $r, s > 0$, and $\gcd(r,s) = 1$.
	Moreover, we have
	$\langle \nu, \lambda_T \rangle = r$,
	$\langle \mu_T, \nu \rangle = s$.
	
	For every edge $e \in E(\tau)$, 
	we can choose a simple closed curve $\eta_e$ carried by $\tau$ that contains $e$ and
	has slope $0$.
	We orient each $\eta_e$ with the orientation induced from $\tau$.
	Because $T \cap (\Sigma \times \{0\})$ has the negative orientation,
	$\langle \eta_e, \lambda_T \rangle = 0$,
	$\langle \mu_T, \eta_e \rangle = -1$.
	
	We define a family of one-parameter measures $m_2(t)$ with $t \in (0,+\infty)$ such that
	$$m_2(t) = \nu(1) + \sum_{e \in E(\tau)} \eta_e(t).$$
	Let $\Lambda_2(t)$ denote the companion lamination of $m_2(t)$,
	with the orientation induced from $\tau$.
	Recall that $|E(\tau)| = N$, so
	$$\frac{\langle \Lambda_2(t), \lambda_T\rangle}{\langle \mu_T, \Lambda_2(t) \rangle} =
	\frac{r}{s-tN}.$$
	If we choose $t = \frac{s}{N}$,
	then $\frac{r}{s-tN} = \infty$.
	For any rational number $x \in (\frac{p}{q-c}, +\infty)$,
	we can choose some $0 < t < \frac{s}{N}$ so that 
	$\frac{r}{s-tN} = x$.
	Similarly, for any rational number $x \in (-\infty, 0)$,
	we can choose some $t > \frac{s}{N}$ so that 
	$\frac{r}{s-tN} = x$.
	Thus,
	$\tau$ realizes all rational slopes in 
	$(-\infty, 0) \cup \{\infty\} \cup (\frac{p}{q-c}, +\infty)$.
	\end{part 2*}

Combining the above two parts,
it follows that all rational slopes in \(\left(-\infty,\frac{p}{q+c}\right) \cup \left(\frac{p}{q-c}, +\infty\right) \cup \{\infty\}\) 
are realized by $\tau$.
\end{proof}

\subsection{The proof of Theorem \ref{main}}\label{proof}

We choose a fibered neighborhood $N(B)$ of $B$.
As in Subsection \ref{subsection 2.2}, the collapsing map from $N(B)$ to $B$ is denoted by $\pi: N(B) \to B$.
In the following discussions, 
we may assume that all laminations carried by $B$ are transverse to the interval fibers of $N(B)$.

We first explain that 
$B(\alpha)$ carries no torus.
Assume that $B(\alpha)$ carries a torus $T$.
Since all product disks in $B(\alpha)$ intersect $\partial M$ and $T$ is a closed surface,
$\pi(T)$ contains no product disk,
and thus $\pi(T) \subseteq \Sigma \times \{0\}$.
However,
$\Sigma \times \{0\}$ carries no closed surface,
which contradicts $\pi(T) \subseteq \Sigma \times \{0\}$.
So $B(\alpha)$ carries no torus.

Choose a multislope $\textbf{s} = (s_1, \ldots, s_k) \in (\mathbb{Q} \cap \{\infty\})^{k}$ contained 
in the multi-inverval as given in Theorem \ref{main}.
Combining Corollary \ref{laminar branched surface}, Proposition \ref{realize} with Theorem \ref{boundary},
since $B(\alpha)$ carries no torus,
$B(\alpha)$ fully carries an essential lamination $\mathcal{L}_{\textbf{s}}$ in $M$ such that
$\mathcal{L}_{\textbf{s}}$ intersects each $T_i$ in a collection of simple closed curves of slope $s_i$.
As $B(\alpha)$ has product complementary regions,
each complementary region of $\mathcal{L}_{\textbf{s}}$ is an $I$-bundle over 
a surface.
We can extend $\mathcal{L}_{\textbf{s}}$ to a foliation $\mathcal{F}_{\textbf{s}}$ in $M$ that
intersects each $T_i$ in a foliation by simple closed curves of slope $s_i$.
We can then extend $\mathcal{F}_{\textbf{s}}$ to 
a foliation $\widehat{\mathcal{F}_{\textbf{s}}}$ in $M(\textbf{s})$.

Next, we explain that \(\widehat{\mathcal{F}_{\textbf{s}}}\) is a taut foliation in \(M(\textbf{s})\).  
The union of the core curves of the filling solid tori is transverse to \(\widehat{\mathcal{F}_{\textbf{s}}}\);
we now verify that it intersects every leaf of \(\widehat{\mathcal{F}_{\textbf{s}}}\).  
Let \(\lambda\) be a leaf of \(\mathcal{L}_{\textbf{s}}\).  
If \(\pi(\lambda)\) contains a product disk of \(B\), 
then \(\pi(\lambda)\) must intersect \(\partial M\), since every product disk intersects $\partial M$.
Otherwise, if \(\pi(\lambda)\) contains no product disk, 
then \(\lambda\) is carried by \(\Sigma \times \{0\}\), 
which still implies that \(\pi(\lambda)\) intersects \(\partial M\).  
In either case, \(\pi(\lambda)\) has nonempty intersection with \(\partial M\), and thus \(\lambda \cap \partial M \neq \emptyset\).  
It follows that every leaf of \(\mathcal{L}_{\textbf{s}}\) intersects \(\partial M\).  
Since each complementary region of \(\mathcal{L}_{\textbf{s}}\) is an \(I\)-bundle over a surface,
and its two boundary leaves are contained in \(\mathcal{L}_{\textbf{s}}\), 
it follows that every leaf of \(\mathcal{F}_{\textbf{s}} - \mathcal{L}_{\textbf{s}}\) also intersects \(\partial M\).  
Thus, all leaves of $\mathcal{F}_{\textbf{s}}$ intersect $\partial M$,
and so the union of the core curves of the filling solid tori intersects all leaves of $\widehat{\mathcal{F}_{\textbf{s}}}$. 
Therefore, \(\widehat{\mathcal{F}_{\textbf{s}}}\) is a taut foliation.  

Recall that $B(\alpha)$ is co-orientable.
A co-orientation on $B(\alpha)$ induces a co-orientation on
$\mathcal{L}_{\textbf{s}}$ and on $\mathcal{F}_{\textbf{s}}$.
It follows that
$\widehat{\mathcal{F}_{\textbf{s}}}$ is co-orientable.
This completes the proof of Theorem \ref{main}.

\section{The proofs of Proposition \ref{pretzel} and Proposition \ref{no interior singularity}}\label{section 4}

We begin by proving Proposition \ref{pretzel}.

Let $q \in \mathbb{N}$ with $q \geqslant 3$,
and let $K$ be the $(-2,3,2q+1)$-pretzel knot in $S^{3}$.
Let $g(K)$ denote the Seifert genus of $K$ (then $g(K) = q + 2$).
Let $X = S^{3} - Int(N(K))$,
let $S$ be a fibered surface of $X$,
and let $\phi: S \to S$ denote the pseudo-Anosov monodromy of $X$.
Fix an orientation on $S^{3}$ so that $\phi$ is right-veering.

\begin{pretzel}\label{pretzel monodromy}
(a)
$\phi$ is co-orientable and co-orientation-reversing.

(b)
$K$ has degeneracy slope $4g(K)-2$.

(c)
All rational slopes in $(-\infty, 2g(K) - 1)$ are CTF surgery slopes of $K$.

(d)
For each $n \geqslant 2$,
the $n$-fold cyclic branched cover of $K$ admits a co-orientable taut foliation.
\end{pretzel}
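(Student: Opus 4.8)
\emph{Parts (a) and (b).} I would begin from an explicit pseudo-Anosov representative of the monodromy $\phi$ of the fibered genus-$(q+2)$ surface $S$ (a train-track model is available for the $(-2,3,2q+1)$-pretzel knots). Reading off the singularity data, one checks that every singularity of the invariant foliations has an even number of prongs and that $\partial S$ carries an even number of singular-leaf endpoints, so $\mathcal{F}^{s}$ is co-orientable; this is half of (a). For (b): since $K$ is a hyperbolic fibered L-space knot in $S^{3}$, the trichotomy preceding Cases \ref{case1}--\ref{case3} forces $K$ into Case \ref{case2} with $m_K=\infty$, so $d(T)$ has multiplicity $1$ and $d(T)=(p;q_0)$ with $|q_0|=1$; right-veeringness gives $f_C(\phi)=1/\delta_T>0$, hence $q_0=1$ and $\delta_T=p$. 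It remains to identify $p$, the number of singularities of $\mathcal{F}^{s}$ on $\partial S$: collapsing $\partial S$ to a point yields a closed genus-$g$ surface whose singular foliation obeys the Euler--Poincar\'e relation, and since the explicit model has no interior singularities (equivalently, since $f_C(\phi)=\frac{1}{4g(K)-2}$ is known for these knots) one gets $p=4g(K)-2$. Thus $\delta_T=4g(K)-2$, proving (b); and as $q_0=1$ is odd while $c=1$, the criterion ``$\varphi^{c}$ is co-orientation-preserving $\iff 2\mid q$'' (Remark \ref{remark}(a) and the surrounding discussion) shows $\phi$ reverses the co-orientation, finishing (a).

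\emph{Part (c).} By (a)--(b), $X$ has connected boundary with $c=1$, $p=4q+6$, $q_0=1$, so Corollary \ref{open book} applies in the case $q_i=1$ and gives the interval $J=(-\infty,\tfrac{p}{2})=(-\infty,2q+3)=(-\infty,2g(K)-1)$: every rational slope $s$ in $J$ gives a co-orientable taut foliation on $X(s)$ transverse to the core of the filling solid torus. Since $\delta_T=4q+6\neq 2$, the canonical coordinates on $\partial X$ coincide with the standard meridian/longitude of $K$ (Remark \ref{slope consistent}), so these Dehn fillings are precisely the Dehn surgeries on $K$; hence every rational slope in $(-\infty,2g(K)-1)$ is a CTF surgery slope of $K$.

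\emph{Part (d).} The plan is to realize $\Sigma_n(K)$ as a Dehn filling of a mapping torus and then invoke Theorem \ref{co-orientation-preserving} when $n$ is even and Corollary \ref{open book} when $n$ is odd. The $n$-fold cyclic cover $X_n$ of $X$ is the mapping torus of $\phi^{n}$, and $\Sigma_n(K)=X_n(m_n)$, where $m_n$ is the slope on the single torus $\partial X_n$ covering the meridian of $K$. Since $\phi^{n}$ has the same invariant foliations as $\phi$, it is again co-orientable, it is co-orientation-preserving for $n$ even and co-orientation-reversing for $n$ odd, and by Remark \ref{remark}(a) it sends the $j$-th boundary singularity $v_j$ to $v_{j+n}$ (indices mod $p$), so the degeneracy locus of $\phi^{n}$ is $(p;\bar n)$ with $\bar n\equiv n\pmod p$ and $\bar n\in(-\tfrac p2,\tfrac p2]$. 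A short computation in the canonical coordinates on $\partial X_n$ — in which the canonical longitude covers $\partial S$ with degree $1$ while $m_n$ covers the meridian with degree $n$ — identifies the degeneracy slope of $\phi^{n}$ with $\tfrac{p}{\bar n}$ (reduced) and the filling slope with $m_n=-1/j_0$, where $j_0=\tfrac{n-\bar n}{p}\in\mathbb{Z}_{\geqslant 0}$; in particular $m_n=\infty$ when $2\leqslant n\leqslant p/2$ and $m_n\in[-1,0)$ otherwise.

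\emph{Finishing (d), and the main obstacle.} It remains, for $n\geqslant 2$, to locate $m_n$ in the good range. When $n$ is even one checks $m_n\neq\tfrac{p}{\bar n}$: whenever $\bar n\neq 0$ the reduced numerator of $\tfrac{p}{\bar n}$ is $\geqslant 2$ while $m_n$, when finite, has numerator $\pm1$; and if $\bar n=0$ then $\tfrac{p}{\bar n}=\infty$ but $m_n=-1/(n/p)\in[-1,0)$. Hence Theorem \ref{co-orientation-preserving} applies. When $n$ is odd, $\bar n$ is odd (as $p$ is even) and so nonzero, so the degeneracy locus $(p;\bar n)$ falls into one of the cases $q_i\in\{1,-1\}$, $q_i>1$, $q_i<-1$ of Corollary \ref{open book}, and in each case $m_n\in J$: if $j_0=0$ then $\bar n=n\geqslant 3$, so $J$ contains $\infty=m_n$; if $j_0\geqslant 1$ then $m_n\in[-1,0)$, which lies in $J$ because $p=4q+6\geqslant 18$ forces every finite endpoint $\pm\tfrac{p}{|\bar n|\pm 1}$ of $J$ to have absolute value $\geqslant 2$. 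Thus Corollary \ref{open book} applies, and for both parities $\Sigma_n(K)$ carries a co-orientable taut foliation. I expect part (b) to be the main obstacle: Case \ref{case2} only gives $2\leqslant p\leqslant 4g(K)-2$, so pinning down $p=4g(K)-2$ (equivalently, that the pretzel-knot monodromy has no interior foliation singularities, or its exact fractional Dehn twist coefficient) genuinely needs a hands-on model of $\phi$; granting that, (a) and (c) are immediate and (d) is the coordinate bookkeeping sketched above.
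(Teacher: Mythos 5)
Your parts (c) and (d) essentially reproduce the paper's argument (for (d) the coordinate bookkeeping with the lifted meridian $-1/a$ and the case split on the parity of $n$ is the same; your blanket claim that every finite endpoint of $J$ has absolute value $\geqslant 2$ fails when $\bar n = p/2$, but the containment $m_n \in J$ still holds since those endpoints are positive while $m_n \in [-1,0)$, so this is only a cosmetic slip). The genuine gap is in (a)--(b), which you yourself call ``the main obstacle'' and then do not close. First, the inference ``every singularity of $\mathcal{F}^{s}$ has an even number of prongs and $\partial S$ carries an even number of boundary singularities, hence $\mathcal{F}^{s}$ is co-orientable'' is invalid: the even-prong condition is necessary but not sufficient for co-orientability, as the paper's own census illustrates ($223$ of the $3242$ fibered manifolds whose monodromies have all-even-prong singularity data are nevertheless non-co-orientable). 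Second, the identification $p = 4g(K)-2$ --- equivalently, that $\mathcal{F}^{s}$ has no interior singularities, or that $f_C(\phi) = \frac{1}{4g(K)-2}$ --- is exactly the nontrivial content, and you defer it to an ``explicit train-track model'' that is never constructed or cited; Case \ref{case2} alone only yields $2 \leqslant p \leqslant 4g(K)-2$. As written, (a) and (b) are assumed rather than proved, and (c), (d) depend on them.

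The paper closes these two points by a different and self-contained route, which you may want to compare. Since $K$ is a hyperbolic fibered L-space knot in $S^{3}$, $\Delta(\mu,\delta) = 1$, so by Fried's theorem the suspension flow of $\phi^{2}$ descends to a pseudo-Anosov flow $\Upsilon$ on the double branched cover $\Sigma_2(K)$, which by Bonahon--Siebenmann is Seifert fibered over the orientable orbifold $S^{2}(2,3,2q+1)$. Barbot's theorem then forces $\Upsilon$ to be an $\mathbb{R}$-covered Anosov flow: in particular it has no singular orbits, so $\mathcal{F}^{s}$ has no interior singularities and Euler--Poincar\'e gives $p = 4g(K)-2$, proving (b); and Brittenham's theorem lets one isotope the weak stable foliation of $\Upsilon$ transverse to the Seifert fibers, whose coherently chosen orientations co-orient it, so $\phi^{2}$ is co-orientation-preserving and $\phi$ is co-orientable, with reversal forced because $|q| = 1$ is odd (Case \ref{case2}). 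If you insist on your approach, you must actually produce the invariant train track for the $(-2,3,2q+1)$-pretzel monodromy and verify transverse orientability directly (e.g.\ orientability of the track), not merely prong parities, and extract the boundary singularity count from it; otherwise the argument does not stand on its own.
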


\begin{proof}[The proof of (a)]
Let $\mu, \delta$ denote the meridian and degeneracy slope on $K$ respectively.
Since $\phi$ is right-veering,
we have $\mu \ne \delta$.
Let $\widetilde{X_2}$ denote the double cyclic cover of $X$,
and let $\Sigma_2(K)$ denote the double branched cover of $K$.
Then $\widetilde{X_2}$ is the mapping torus of $S$ with monodromy $\phi^{2}$.

Let $T = \partial X$ and $\widetilde{T} = \partial \widetilde{X_2}$,
and let $\widetilde{\mu}$ denote the meridian on $\widetilde{T}$.
Let $\Phi$ be the suspension flow of $\phi$ in $X$,
and let $\widetilde{\Phi}$ be the suspension flow of $\phi^{2}$ in $\widetilde{X_2}$.
As in Section \ref{section 1},
the degeneracy loci of $\Phi, \widetilde{\Phi}$ on $T, \widetilde{T}$ are denoted by $d(T)$ and $d(\widetilde{T})$, respectively.
Because $\widetilde{X_2}$ is the double cyclic cover of $X$,
we have $\Delta(\widetilde{\mu},d(\widetilde{T})) = 2\Delta(\mu,d(T))$.
As $K$ is a hyperbolic L-space knot in $S^{3}$,
we have $\Delta(\mu, \delta) = 1$,
and the degeneracy locus $d(T)$ has multiplicity $1$ (see Case \ref{case2}).
It follows that
\[\Delta(\widetilde{\mu},d(\widetilde{T})) = 2 \Delta(\mu,d(T)) = 2 \Delta(\mu, \delta) = 2.\]

As described in \hyperref[Fr]{[Fr, Section 1]},
the suspension flow $\Phi$ induces a flow $\Upsilon$ in $\Sigma_2(K)$ such that
$\Upsilon \mid_{\Sigma_2(K) - K}$ can be identified with $\widetilde{\Phi} \mid_{\operatorname{Int}(\widetilde{X_2})}$.
In addition,
the weak stable and unstable foliations of $\widetilde{\Phi}$ in $\widetilde{X_2}$ induce 
a pair of singular foliations $\mathcal{E}^{s}, \mathcal{E}^{u}$ in $\Sigma_2(K)$.
Let $\lambda^{s}, \lambda^{u}$ denote the leaves of $\mathcal{E}^{s}, \mathcal{E}^{u}$ containing $K$, respectively.
Since $\Delta(\widetilde{\mu},d(\widetilde{T})) = 2$,
both of $\lambda^{s}, \lambda^{u}$ have exactly two prongs at $K$;
that is,
each of $\lambda^{s} - K, \lambda^{u} - K$ consists of exactly two components.
As illustrated in \hyperref[Fr]{[Fr, Section 1]},
$\Upsilon$ is a pseudo-Anosov flow.
Since $\mathcal{E}^{s}, \mathcal{E}^{u}$ are induced from 
the weak stable and unstable foliations of $\widetilde{\Phi}$, respectively,
they are exactly the weak stable and unstable foliations of $\Upsilon$, respectively.

It is explained in \hyperref[BoS]{[BoS, A.3, A.4]} (and summarized in \hyperref[BoS]{[BoS, page 258, Theorem A.6]}) that
the manifold $\Sigma_2(K)$ is a Seifert fibered $3$-manifold with base orbifold \(S^2(2,3,2q+1)\),
i.e. $S^{2}$ with three cone points of index $2, 3, 2q+1$ respectively.
Since $\Sigma_2(K)$ is a Seifert fibered $3$-manifold,
it follows from \hyperref[BF]{[BF, Theorem 4.1]} and \hyperref[Ba]{[Ba]} that 
$\Upsilon$ is an Anosov flow.
In fact,
\hyperref[BF]{[BF, Theorem 4.1]} indicates that, up to a finite cover,
every pseudo-Anosov flow in a Seifert fibered $3$-manifold is 
topologically equivalent to the geodesic flow on the unit tangent bundle of some closed hyperbolic surface,
which is necessarily an Anosov flow.
It follows that $\mathcal{E}^{s}, \mathcal{E}^{u}$ are covered by 
some regular $2$-dimensional foliations in a finite cover of $\Sigma_2(K)$,
and therefore, $\mathcal{E}^{s}, \mathcal{E}^{u}$ themselves are regular $2$-dimensional foliations,
which implies that $\Upsilon$ must be an Anosov flow.

We note that every leaf of the weak stable foliation $\mathcal{E}^{s}$ is homeomorphic to $\mathbb{R}^{2}$ or $\mathbb{R} \times S^{1}$ (see, for example, \hyperref[Ba]{[Ba, Theorem 2.1]}),
which implies that $\mathcal{E}^{s}$ contains no compact leaf.
As illustrated in \hyperref[Br]{[Br, Corollary 7]},
$\mathcal{E}^{s}$ can be $C^{0}$-isotoped to a taut foliation $\mathcal{E}$ transverse to the Seifert fibers of $\Sigma_2(K)$.
Because the base orbifold of $\Sigma_2(K)$ is orientable,
the Seifert fibers of $\Sigma_2(K)$ have continuously varying orientations, 
and these orientations define a co-orientation on $\mathcal{E}$.
The co-orientation on $\mathcal{E}$ induces continuously varying leafwise orientations on $\mathcal{E}$,
which determine continuously varying leafwise orientations on $\mathcal{E}^{s}$,
along with an induced co-orientation on $\mathcal{E}^{s}$.
Since $\mathcal{E}^{s}$ is co-orientable,
it follows that $\phi^{2}$ is co-orientable and co-orientation-preserving,
and therefore $\phi$ is co-orientable.
Since $\phi$ is right-veering and $K$ is a knot in $S^{3}$,
$\phi$ can only be co-orientation-reversing.
\end{proof}

\begin{proof}[The proof of (b)]	
The construction of Fintushel and Stern in \hyperref[FS]{[FS]} implicitly shows that 
the $(-2,3,7)$-pretzel knot has degeneracy slope $18$.
Their construction can be generalized to the all $(-2,3,2k+1)$-pretzel knots with $k \in \mathbb{Z}_{\geqslant 3}$ and 
implies that their degeneracy slope is $4g-2$ (where $g$ is the genus).
We give another proof here using (a).

Let $\mathcal{F}^{s}$ denote the stable foliation of $\phi$.
Since $\Upsilon$ is an Anosov flow,
$\Upsilon$ has no singular orbit.
Thus, $\mathcal{F}^{s}$ has no singularity in $\operatorname{Int}(S)$,
and therefore each singularity of $\mathcal{F}^{s}$ has three separatrices.
It follows from the Euler-Poincar\'e formula (see, for example, \hyperref[FM]{[FM, Proposition 11.4]}) that,
the stable foliation of $\phi$, denoted $\mathcal{F}^{s}$, has $- 2 \chi(S) = 4g(K) - 2$ singularities in $\partial S$.
Here, 
we note that \hyperref[FM]{[FM, Subsection 11.2.2]} focuses only on closed surfaces,
so we provide additional explanation for why the formula holds for $(S, \mathcal{F}^{s})$.
Let $\widehat{S}$ be the closed surface obtained by collapsing $\partial S$ to a single point,
and let $\mathcal{H}$ be the singular foliation on $\widehat{S}$ induced by $\mathcal{F}^{s}$.
Then, \hyperref[FM]{[FM, Subsection 11.2.2]} applies to $(\widehat{S}, \mathcal{H})$ and implies that
the unique singularity of $\mathcal{H}$ has $2 \chi(\widehat{S}) - 2 = 4g(K) - 2$ prongs.
Hence $\mathcal{F}^{s}$ has $4g(K) - 2$ singularities in $\partial S$.

Note that $\delta > 0$ since $\phi$ is right-veering.
As $\Delta(\mu, \delta) = 1$,
we have
$\delta = 4g(K) - 2$.
\end{proof}

\begin{proof}[The proof of (c)]
	This follows from (b) and Corollary \ref{open book} directly.
\end{proof}

\begin{proof}[The proof of (d)]
	Let $\widetilde{X_n}$ denote the $n$-fold cyclic cover of $X$,
	which is also the mapping torus of $S$ with monodromy $\phi^{n}$.
	If $n$ is even,
	then $\phi^{n}$ is co-orientation-preserving,
	so the result can be deduced from Theorem \ref{co-orientation-preserving} directly.
	Now we assume that $n$ is odd
	(then $\phi^{n}$ is co-orientation-reversing).
	We denote $4g(K) - 2$ by $p$.
	There is a unique $a \in \mathbb{Z}_{\geqslant 0}$ with
	$-\frac{1}{2}p < n - ap \leqslant \frac{1}{2}p$,
	and let $b = n - ap$.
	We fix the canonical coordinate system on $\partial \widetilde{X_n}$.
	By Remark \ref{remark} (a),
	$(p; b)$ is
	the degeneracy locus of the suspension flow of $\phi^{n}$ on $\partial \widetilde{X_n}$.
	Because $\phi$ is right-veering, we have $\frac{p}{b} > 0$ and thus $b \geqslant 1$.
	
	Applying Corollary \ref{open book} to $\widetilde{X_n}$,
	a slope $s$ on $\partial \widetilde{X_n}$ is a CTF filling slope if
	$s \notin [\frac{p}{b+1}, \frac{p}{b-1}]$ (when $b > 1$) or
	$s \notin [\frac{p}{2}, + \infty) \cup \{\infty\}$ (when $b = 1$).
	The inverse image of the slope $\infty$ on $\partial X$ is
	the slope $- \frac{1}{a}$ on $\partial \widetilde{X_n}$.
	When $b = 1$,
	we have $a \geqslant 1$ since $n > 1$,
	and thus $- \frac{1}{a} \notin [\frac{p}{2}, + \infty) \cup \{\infty\}$.
	When $b > 1$,
	we have
	$\frac{p}{b+1}, \frac{p}{b-1} \in (0, +\infty)$ and $- \frac{1}{a} \in (-\infty, 0) \cup \{\infty\}$,
	which implies
	$- \frac{1}{a} \notin [\frac{p}{b+1}, \frac{p}{b-1}]$.
	The result follows directly.
\end{proof}

Proposition \ref{no interior singularity} basically follows from the proof of Proposition \ref{pretzel}.  
We provide a brief explanation below.

\begin{no interior singularity}
	For any hyperbolic L-space knot in $S^{3}$ with co-orientation-reversing monodromy,
	if the stable foliation of its monodromy has no interior singularities in the fibered surface,
	then every $3$-manifold obtained by Dehn surgery on this knot 
	admits a co-orientable taut foliation if and only if it is a non-L-space.
\end{no interior singularity}

\begin{proof}
	Suppose that $K_0$ is a hyperbolic L-space knot in $S^{3}$ with co-orientation-reversing monodromy,
	and we may assume that $K_0$ has right-veering monodromy.
	Let $g(K_0)$ denote the genus of $K_0$.
	As shown in the proof of Proposition \ref{pretzel} (b),  
	if the stable foliation of the monodromy of $K_0$ has no singularity in $\operatorname{Int}(S)$,
	then $K_0$ has degeneracy slope $4g(K_0)-2$.
	It follows from Corollary \ref{open book} that
	all rational slopes in $(-\infty,2g(K_0)-1)$ are CTF-surgery slopes for $K_0$.
	Since a slope $s \in \mathbb{Q} \cup \{\infty\}$ on $K$ is an NLS-surgery slope if and only if
    $s \in (-\infty,2g(K_0)-1)$ (see Subsection \ref{subsection 1.1}),
    it follows that
    every $3$-manifold obtained from Dehn surgery on $K_0$ admits a co-orientable taut foliation if and only if
	it is a non-L-space.
\end{proof}

\section{Acknowledgements}

The author wishes to thank Xingru Zhang for his guidance, 
patience,
encouragement,
and for many helpful discussions and comments on this work.
He is grateful to Nathan Dunfield for providing him the census of examples \hyperref[D3]{[D3]} and
answering several questions about the examples.
He thanks Cameron Gordon and Rachel Roberts for telling him 
the fractional Dehn twist coefficients of the $(-2,3,2q+1)$-pretzel knots and
some relevant works.
He thanks Chi Cheuk Tsang for telling him that
the monodromies of the $(-2,3,2q+1)$-pretzel knots have co-orientable stable foliations,
and for some helpful comments.
He thanks Cagatay Kutluhan, Johanna Mangahas, William Menasco,
Tao Li and Diego Santoro for some helpful conversations.
He thanks Steve Boyer for reviewing the manuscript and offering valuable suggestions.
He is grateful to the anonymous referee for valuable comments and suggestions that improved the paper.
He thanks Tech Topology Summer School 2023 and its organizers for providing him 
a great chance to communicate and discuss,
and for their travel support.

\appendix
\section{An alternative proof of Lemma \ref{transverse}}\label{appendix}

We present an alternative proof of Lemma \ref{transverse} from a different perspective as follows.
In this proof,
we analyze the leaf space of the singular $1$-dimensional foliation,
which is an $\mathbb{R}$-order tree, along with its maximal Hausdorff quotient.
For background on order trees of essential laminations,
we refer the reader to \hyperref[GO]{[GO, pp. 70-72, Construction of tree, Method II]}.
For order trees of planar essential laminations, see \hyperref[GK]{[GK]}.
For the maximal Hausdorff quotient, see \hyperref[RSS]{[RSS, Section 4]}.
We will clarify the differences between these references and our approach in Remark \ref{R-order tree}.

\begin{proof}[An alternative proof of Lemma \ref{transverse}]
	Let $\widetilde{\Sigma}$ be the universal cover of $\Sigma$,  
	and let $\widetilde{\mathcal{F}^{s}}$ be the pulled-back foliation of $\mathcal{F}^{s}$ in $\widetilde{\Sigma}$.  
	Let $\mathcal{H}$ be the restriction of $\widetilde{\mathcal{F}^{s}}$ to $\operatorname{Int}(\widetilde{\Sigma})$,  
	and let $L(\mathcal{H})$ be the leaf space of $\mathcal{H}$.  
	For convenience, for any $x \in \operatorname{Int}(\widetilde{\Sigma})$,  
	we denote by $\lambda_x$ the leaf of $\mathcal{H}$ containing $x$.  
	There is a canonical quotient map $q_{\mathcal{H}}: \operatorname{Int}(\widetilde{\Sigma}) \to L(\mathcal{H})$ such that  
	$q_{\mathcal{H}}(x) = q_{\mathcal{H}}(y)$ if and only if $\lambda_x = \lambda_y$.  
	We note that $L(\mathcal{H})$ is an $\mathbb{R}$-order tree and therefore is simply connected  
	(see Remark \ref{R-order tree} for an explanation).

	Recall that two leaves $\lambda, \mu$ of $\mathcal{H}$ are \emph{non-separated} if  
	there is no leaf of $\mathcal{H} - \{\lambda, \mu\}$ that separates them.  
	Equivalently, $\lambda, \mu$ are simultaneously approximated by a collection of leaves of $\mathcal{H}$.  
	It's not hard to observe that,
	for any two adjacent singularities on a component of $\partial \widetilde{\Sigma}$,
	if $\lambda, \mu$ are the two leaves of $\mathcal{H}$ such that 
	$\overline{\lambda}, \overline{\mu}$ contain these two singularities respectively,
	then $\lambda, \mu$ are non-separated.

	Let $L$ be the maximal Hausdorff quotient of $L(\mathcal{H})$,
	i.e. the quotient space of $L(\mathcal{H})$ under the closed equivalence relation generated by
	$\lambda \sim \mu$ if $\lambda, \mu$ are non-separated (see Remark \ref{R-order tree} for more information about $L$).
	We note that $L$ is still simply connected.
	Moreover, as observed in the previous paragraph,  
	any two leaves of $\mathcal{H}$ whose closures contain two adjacent singularities on $\partial \widetilde{\Sigma}$
	represent the same point in $L$.  
	It follows that any two leaves $\lambda, \mu$ of $\mathcal{H}$
	whose closures $\overline{\lambda}, \overline{\mu}$ intersect the same component of $\partial \widetilde{\Sigma}$
	also represent the same point in $L$.

	The map $q_{\mathcal{H}}$ induces a quotient map
	$q': \operatorname{Int}(\widetilde{\Sigma}) \to L$ such that
	$q'(x) = q'(y)$ if $\lambda_x, \lambda_y$ represent the same point in $L$.
	As explained above, 
	$q'(x) = q'(y)$ for all $x,y \in \operatorname{Int}(\widetilde{\Sigma})$ such that $\overline{\lambda_x}, \overline{\lambda_y}$
	intersect the same component of $\partial \widetilde{\Sigma}$.
	Thus, the map $q'$ can be extended to a quotient map
	$q: \widetilde{\Sigma} \to L$ such that $q \mid_{\operatorname{Int}(\widetilde{\Sigma})} = q'$,
	and for any component $C$ of $\partial \widetilde{\Sigma}$,
	$q$ takes all points in $C$ to $q'(\lambda)$ for some leaf $\lambda$ of $\mathcal{H}$
	with $\overline{\lambda} \cap C \neq \emptyset$.

	We note that the co-orientation on $\mathcal{F}^{s}$ induces 
	an orientation on $L(\mathcal{H})$,
	which also induces an orientation on $L$.
	
	Now, we may consider $\rho$ as a path $\rho: I \to \Sigma$ such that 
	(1)
	$\rho(0) = \rho(1)$ when $\rho$ is a simple closed curve, 
	and $\rho(0), \rho(1) \in \partial \Sigma$ when $\rho$ is a properly embedded arc,
	(2)
	$\rho$ can be divided into finitely many segments $\rho_1, \ldots, \rho_n$ which
	are either positively transverse to $\mathcal{F}^{s}$ or
	tangent to $\mathcal{F}^{s}$
	(where at least one of $\rho_1, \ldots, \rho_n$ is positively transverse to $\mathcal{F}^{s}$).
	Let $\widetilde{\rho}: I \to \widetilde{\Sigma}$ be a lift of $\rho$ to $\widetilde{\Sigma}$,
	and let $\widetilde{\rho_i}$ be the lift of $\rho_i$ to $\widetilde{\Sigma}$ contained in $\widetilde{\rho}$.
	Let $$\Omega = \{i \in \{1,\ldots,n\} \mid \rho_i \text{ is positively transverse to } \mathcal{F}^{s}\}.$$
	Then $\Omega \ne \emptyset$.
	In addition,
	each $\rho_i$ with $i \in \Omega$ is canonically identified with
	a positively oriented segment in $L$ via the quotient map $q: \widetilde{\Sigma} \to L$,
	and each $\rho_i$ with $i \in \{1,\ldots,n\} - \Omega$ is canonically identified with
	a single point of $L$.
	It follows that the path $\widetilde{\rho}$ in $\widetilde{\Sigma}$ can be canonically identified with
	a positively oriented path in $L$ (denoted $\gamma$) via the quotient map $q: \widetilde{\Sigma} \to L$.
	
	Since $L$ is simply connected, 
	the two endpoints of $\gamma$ are distinct,
	and therefore the map $q: \widetilde{\Sigma} \to L$ takes $\widetilde{\rho}(0), \widetilde{\rho}(1)$ to distinct points in $L$.
	As illustrated above,
	$\widetilde{\rho}(0), \widetilde{\rho}(1)$ cannot be contained in the same leaf of $\mathcal{F}^{s}$ or
	the same component of $\partial \widetilde{\Sigma}$.
	However, if $\rho$ is a non-essential simple closed curve,
	then the two endpoints $\widetilde{\rho}(0), \widetilde{\rho}(1)$ of $\widetilde{\rho}$ are identified.
	Similarly, if $\rho$ is a non-essential properly embedded arc,
	then the two endpoints $\widetilde{\rho}(0), \widetilde{\rho}(1)$ of $\widetilde{\rho}$ are contained in 
	the same component of $\partial \widetilde{\Sigma}$.
	Both of these cases contradict our previous conclusion.
	It follows that $\rho$ is essential.
\end{proof}

For completeness, we clarify the differences between the above proof and the referenced materials.  

\begin{remark}\label{R-order tree}\rm
	(a)
	We first explain why $L(\mathcal{H})$ is an order tree.
	Let $\mathscr{S}$ be the set of singular leaves of $\mathcal{H}$.
	We note that $\mathscr{S}$ is a countable union of leaves.
	By splitting open $\mathcal{H}$ along the union of leaves in $\mathscr{S}$,
	we obtain a lamination $\Lambda$ of $\operatorname{Int}(\widetilde{\Sigma})$.
	Note that $\operatorname{Int}(\widetilde{\Sigma})$ is homeomorphic to $\mathbb{R}^{2}$,
	and $\Lambda$ is an essential lamination on $\operatorname{Int}(\widetilde{\Sigma})$ in the sense of \hyperref[GK]{[GK]}.
	As described in \hyperref[GK]{[GK, Proposition 1.1]},
	$\Lambda$ is associated with an $\mathbb{R}$-order tree $T(\Lambda)$,
	such that there exists a quotient map $\nu: \operatorname{Int}(\widetilde{\Sigma}) \to T(\Lambda)$
	which takes each non-boundary leaf of $\Lambda$ 
	and the closure of each complementary region of $\Lambda$ to a point of $T(\Lambda)$.
	We can canonically identify $T(\Lambda)$ to the leaf space $L(\mathcal{H})$ of $\mathcal{H}$
	so that the quotient map $q_{\mathcal{H}}: \operatorname{Int}(\widetilde{\Sigma}) \to L(\mathcal{H})$ for $\mathcal{H}$
	is consistent with the map $\nu: \operatorname{Int}(\widetilde{\Sigma}) \to T(\Lambda)$.
	It follows that $L$ is an $\mathbb{R}$-order tree.
	
	(b)
	Here we provide some references on the maximal Hausdorff quotient of an $\mathbb{R}$-order tree.
	For a connected, simply connected, possibly non-Hausdorff $1$-manifold,
	the approach to obtain its maximal Hausdorff quotient is described in \hyperref[RSS]{[RSS, Section 4]},
	and the resulting space is referred to as its associated \emph{Hausdorff tree} in \hyperref[RSS]{[RSS, Definition 4.6]}.
	As described in \hyperref[RS]{[RS, page 186]} or \hyperref[Fe]{[Fe, page 4]},
	this operation also applies to $\mathbb{R}$-order trees.
\end{remark}

\end{document}